\newtheorem{lemma}{Lemma}[section]
\newtheorem{definition}{Def\mbox inition}[section]
\newtheorem{proposition}{Proposition}[section]
\newtheorem{theorem}{Theorem}[section]
\newtheorem{remark}{Remark}[section]
\newtheorem{exam}{Example}[section]
\newtheorem{corollary}{Corollary}[section]
\theoremstyle{plain}
\newsavebox{\myboxb}
\newcommand{\mypictureb}[1][]{
\begin{tikzpicture}[#1]

\def\dist{2.5}
\node at ($(-5.5,1.5)+(15:2.5)$) {$Q:$};

    \draw[fill=black] ($(0,0)+(90:2)$) circle (.08);

    \draw[fill=black] ($(0,0)+(0:2)$) circle (.08);

    \draw[fill=black] ($(0,0)+(45:2)$) circle (.08);
    
    \draw[fill=black] ($(0,0)+(270:2)$) circle (.08);

    \draw[fill=black] ($(0,0)+(225:2)$) circle (.08);
    
    \draw[fill=black] ($(0,0)+(135:2)$) circle (.08);

\draw[->,shorten <=7pt, shorten >=7pt] ($(-.3,4)+(280:2)$) arc (85:140:1.7);
\node at ($(-.4,.2)+(105:2)$) {$\alpha_0$};

\draw[->,shorten <=7pt, shorten >=7pt] ($(0,3)+(228:2.1)$) arc (135:180:2);
\node at ($(-.8,2.5)+(228.5:2.1)$) {$\alpha_1$};

\draw[->,shorten <=7pt, shorten >=7pt] ($(.5,.4)+(300:2)$) arc (325:355:2.8);
\node at ($(.1,1.7)+(338.5:2.5)$) {$\alpha_{k-1}$};
  
\draw[->,shorten <=7pt, shorten >=7pt] ($(-.35,0)+(280:2)$) arc (270:310:2.1);
\node at ($(.1,4.3)+(315-22.5:2.5)$) {$\alpha_k$};

\draw[->,shorten <=7pt] ($(1,-.6)+(78:2)$) arc (43:80:2.2);
\draw[->,shorten >=7pt] ($(.3,1.3)+(328:2)$) arc (5:50:1.7);
\draw[->,shorten <=7pt] ($(-1.9,2)+(270:2)$) arc (170:210:1.7);
\draw[->,shorten >=7pt] ($(.5,-.5)+(215:2)$) arc (240:270:2.3);
\node at ($(1.7,.2)+(270-20:2.5)$) {$\alpha_i$};
\node at ($(.15,-1.9)+(25:2.5)$) {$\alpha_{k-2}$};
\node at ($(-4.6,-1.5)+(20:2.5)$) {$\alpha_{i-2}\ $};
\node at ($(-1.9,.3)+(315-25:2.5)$) {$\ \alpha_{i-1}$};

\foreach \ang in {310,315,320,176,179.5,183}{
  \draw[fill=black] ($(0,0)+(\ang:2)$) circle (.02);
}
\end{tikzpicture}
}
\g@addto@macro{\endabstract}{\@setabstract}
\newcommand{\authorfootnotes}{\renewcommand\thefootnote{\@fnsymbol\c@footnote}}
\tikzset{my loop/.style =  {to path={
  \pgfextra{}
  [looseness=12,min distance=10mm]
  \tikz@to@curve@path},font=\sffamily\small
  }}  
\subjclass[2020]{16G20, 05E10.}
\keywords{Locally monomial algebras, Special multiserial algebras, Maximal paths, UMP algebras.}
\begin{document}
\title{About UMP algebras and a special classif\mbox ication case} 

\author[Caranguay-Mainguez]{Jhony Caranguay-Mainguez}
\address{Universidad de Antioquia, Instituto de Matem\'aticas}
\curraddr{Calle 67 No. 53-108, Medell\'in, Colombia}
\email{jhony.caranguay@udea.edu.co}

\author[Franco]{Andr\'es Franco}
\address{Universidad de Antioquia, Instituto de Matem\'aticas}
\curraddr{Calle 67 No. 53-108, Medell\'in, Colombia}
\email{andres.francol@udea.edu.co}
%\thanks{}

\author[Reynoso-Mercado]{David Reynoso-Mercado}
\address{Universidad de Antioquia, Instituto de Matem\'aticas}
\curraddr{Calle 67 No. 53-108, Medell\'in, Colombia}
\email{david.reynoso@udea.edu.co}

\author[Rizzo]{Pedro Rizzo}
\address{Universidad de Antioquia, Instituto de Matem\'aticas}
\curraddr{Calle 67 No. 53-108, Medell\'in, Colombia}
\email{pedro.hernandez@udea.edu.co}

\maketitle

\begin{abstract}
The class of UMP algebras arises in several classif\mbox ication problems in the context of derived categories of f\mbox inite-dimensional algebras. In this paper we def\mbox ine the class of UMP algebras and develop algebraic combinatorics tools in order to present a characterization of this class of algebras which are \textit{locally monomial} (see Def\mbox inition \ref{def:locmon}) and special multiserial algebras. Among other things, we describe the ramif\mbox ications graph of symmetric special biserial algebras and we classify which of them are UMP algebras in terms of their bound quivers and their associated Brauer graphs.\\
\end{abstract}

\section{Introduction}
Since its introduction in 1987 by Assem-Skowro\'nski in \cite{AS}, gentle algebras have played an important role in the representation theory of algebras. In recent years their derived categories have been extensively studied (e.g. \cite{LP}, \cite{OPS}), in particular, the description of the indecomposable objects is well known. More precisely, Bekkert-Merklen \cite{Be-Me} gave an explicit description of the indecomposable objects in the bounded derived category of a gentle algebra. Using similar techniques as in the latter paper, in \cite{FGR1} the authors gave a combinatorial description of a family of indecomposable objects for a class of algebras, which they called SUMP algebras. The name \textit{SUMP algebra} is the abbreviation for a String algebra $A=\Bbbk Q/I$ with the Unique Maximal Path property, which means that every arrow extends to a unique maximal path in $A$. Special cases of SUMP algebras are gentle algebras and SAG algebras, where the name of the last class is an abbreviation for String Almost Gentle algebras, as introduced in \cite{FGR1}. Furthermore, the authors in \cite{FGR1} conjectured that string algebras with certain cyclic relations generating their admissible ideals determine the class of SUMP algebras (see Example 16 and Def\mbox inition 17 in \cite{FGR1}). In the present paper we provide an answer to this conjecture in a more general setting. 

It is worth mentioning that maximal paths occur in the study of derived equivalences between bound quiver algebras (see e.g. \cite{AG}, \cite{WX}). Also, for bound quiver algebras the Unique Maximal Path property, as far as we know, is also a very relevant and useful property involved in classif\mbox ication problems of certain objects in important categories, in addition to those introduced above for their derived categories. Indeed, on one hand in \cite{Gr-Sc}, this property allowed the authors to identify and classify relevant properties of the almost gentle algebras, their trivial extensions and their derived categories. In the same direction, in \cite{FM} the author determined the derived representation type of certain classes of quadratic string algebras, which are examples of SUMP algebras. On the other hand in \cite{GRV}, the Unique Maximal Path property is indispensable to determine the Auslander--Reiten component of string (indecomposable) complexes for certain classes of symmetric special biserial algebras.

The former conjecture and the latter digression inspired us to search necessary and suf\mbox f\mbox icient conditions in order to characterize bound quiver algebras satisfying the Unique Maximal Path property, brief\mbox ly, UMP algebras. That is, a UMP algebra is a bound quiver algebra such that two dif\mbox ferent maximal paths have no common arrows. Examples of UMP algebras, additional to SUMP algebras, are the Almost Gentle algebras in \cite{Gr-Sc}, as showed in \cite[Lemma 12]{FGR1}. In this paper, as our f\mbox irst attempt to achieve this classif\mbox ication, we provide necessary and suf\mbox f\mbox icient conditions for a \textit{locally monomial} (see Def\mbox inition \ref{def:locmon}) special multiserial algebra to be a UMP algebra. More precisely, our main result, which we will prove as Theorem \ref{thm:main} in Section 4, is the following.

\begin{theorem}
Let $A=\Bbbk Q/I$ be a special multiserial and locally monomial algebra. Then, $A$ is a UMP algebra if and only if for every $\omega$-relation $r$, the path $\omega(N(r))$ is cyclic and $r$ is the unique relation in $R_{N(r)}$ that is a subpath of some power of $\omega(N(r))$, i.e.
\begin{equation*}
R_{N(r)}\cap\{\text{subpaths of}\,\, \omega(N(r))^{(p)}:\,p\in\mathbb{Z}^+\}=\{r\}.
\end{equation*}
\end{theorem}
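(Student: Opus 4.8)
The plan is to reduce the global UMP condition to a local analysis at each ramification node $N$ of the ramification graph. In a special multiserial algebra each arrow admits at most one continuation on either side that is not annihilated by $I$, so that, once the locally monomial hypothesis is used to guarantee that distinct paths are not identified by binomial relations, a maximal path is obtained deterministically by following the unique admissible continuation at each endpoint until no further extension is possible. I would first record this as a structural lemma describing maximal paths in terms of the sets $R_N$ of relations attached to the nodes; this converts the statement ``two distinct maximal paths share an arrow'' into a statement about where the deterministic continuation process is forced to stop, namely at the relations lying in the $R_N$.

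For the forward implication, assume $A$ is UMP and fix an $\omega$-relation $r$, writing $N = N(r)$. I would first show that $\omega(N)$ is cyclic: if its source and target were distinct, then following $\omega(N)$ together with the truncation imposed by $r$ would let me exhibit a second arrow whose maximal path re-enters the support of $\omega(N)$, producing two distinct maximal paths with a common arrow and contradicting UMP. Granting that $\omega(N)$ is cyclic, I would then prove the uniqueness clause by contradiction: if some $r' \in R_N$ with $r' \neq r$ were also a subpath of a power $\omega(N)^{(p)}$, the two relations would cut the cyclic path at two different places, and the arrows lying strictly between the two cuts would then belong to two different maximal paths, again contradicting UMP. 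Since the inclusion $\{r\} \subseteq R_{N(r)} \cap \{\text{subpaths of } \omega(N(r))^{(p)} : p \in \mathbb{Z}^+\}$ is immediate from the definition of an $\omega$-relation, only the reverse inclusion requires this argument.

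For the converse, assume the displayed condition holds for every $\omega$-relation, fix an arbitrary arrow $\alpha$, and show that $\alpha$ cannot lie on two distinct maximal paths. Running the deterministic continuation process from $\alpha$ in both directions, it can only terminate or become periodic at a node it passes through; the only mechanism by which $\alpha$ could lie on two distinct maximal paths is the presence, at some such node $N$, of two distinct relations of $R_N$ available to truncate the path along $\omega(N)$. Cyclicity of $\omega(N)$ together with the uniqueness of $r$ among subpaths of powers of $\omega(N)$ excludes exactly this possibility, forcing a single well-defined truncation and hence a single maximal path through $\alpha$. Assembling these local statements over all nodes yields that every arrow extends to a unique maximal path, i.e.\ $A$ is UMP.

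The main obstacle I anticipate is the cyclic/periodic bookkeeping in both directions: when $\omega(N)$ is a cycle, the maximal path along it is the power $\omega(N)^{(p)}$ truncated exactly at the occurrence of $r$, and one must argue carefully that this truncation is well-defined and genuinely maximal, ruling out both premature stopping (from an extraneous relation of $R_N$) and failure to stop (when $\omega(N)$ is not cyclic). Controlling precisely which relations of $R_N$ can appear as subpaths of powers of $\omega(N)$, and translating each such appearance into a concrete pair of arrow-sharing maximal paths, is the technical heart of the argument, and is where the special multiserial and locally monomial hypotheses are used in full.
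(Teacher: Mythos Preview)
Your overall strategy---reduce to each weakly connected component $N$, then analyse how the relations in $R_N$ truncate the deterministic extension along $\omega(N)$---is exactly the strategy of the paper. The difference is in implementation: the paper does not argue directly with a ``deterministic continuation process'' but first proves a complete classification of the maximal paths of $A_N$. Using the total order on $\mathcal{S}_N=\{s_1<_s\cdots<_s s_k\}$ it writes down explicit paths $\mathfrak{m}_0,\ldots,\mathfrak{m}_k$ and shows $\mathcal{M}_N=\{\mathfrak{m}_i+I_N\}$ (Theorem~\ref{thm:maxN}). From that list one reads off a clean dichotomy (Theorem~\ref{thm:umpchar1}): $A_N$ is UMP iff either every $s_i$ has length~$2$, or $k=1$ and $\omega_n^{l_n}\omega_0^0\notin I_N$. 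The main theorem then follows by translating ``$\omega$-relation'' into ``element of some $\mathcal{S}_N$ of length $>2$'' (Remark~\ref{romrelnw}) and checking that the displayed condition matches this dichotomy.

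Your sketch, as written, has two genuine soft spots that this machinery handles cleanly. First, in the converse you assert that ``the only mechanism by which $\alpha$ could lie on two distinct maximal paths is the presence \ldots\ of two distinct relations of $R_N$''. That is not quite true: a \emph{single} relation $r\in\mathcal{S}_N$ with $l_r>1$ already forces overlapping maximal paths $\mathfrak{m}_0\neq\mathfrak{m}_1$ whenever $\omega(N)$ is \emph{not} cyclic (this is Lemma~\ref{lem:specialcase}). You invoke cyclicity in the next sentence, but the hypothesis only supplies cyclicity of $\omega(N(r))$ for nodes $N$ that \emph{contain} an $\omega$-relation; you still have to say why nodes with no $\omega$-relation (i.e.\ every $s_i$ has length~$2$, or $\mathcal{S}_N=\emptyset$) are automatically UMP. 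This is easy once isolated---length-two relations cut $\omega(N)$ into disjoint pieces---but it is a separate case, not a consequence of your stated mechanism. Second, ``source and target distinct'' is not the right formulation of non-cyclicity here: what matters is whether $\omega_n^{l_n}\omega_0^0\in I_N$, equivalently whether the component $N$ of the ramifications graph is a line or a cycle in the sense of Lemma~\ref{lem:compchar}. Pinning your argument to that condition (rather than to $s(\omega(N))\neq t(\omega(N))$) is what makes the forward direction go through.
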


When applied to special multiserial and monomial algebras, Theorem \ref{thm:main} yields a new, more manageable presentation (see Corollary \ref{cor:main}). Actually, this characterization for monomial algebras can be extended to a broader class of algebras as presented in Corollary \ref{cor:conesp}. Moreover, Proposition \ref{prop:practice}  illustrates how these results can be used in practice to study the UMP property in concrete cases (see also Remark \ref{rmk:practice}).

An important class of f\mbox inite-dimensional algebras is the class of symmetric special biserial algebras. It is known that these algebras can be obtained as Brauer graph algebras. As an application of Theorem \ref{thm:main} and the techniques developed in this paper, we prove that the weakly connected components of the ramif\mbox ications graph for symmetric special biserial algebras, or equivalently, for Brauer graph algebras, are in bijection with the special cycles of the Brauer graph associated with the Brauer graph algebra (see Theorem \ref{thm:Bralg}). This correspondence establishes a connection between the combinatorial tools of these algebras (see Theorems \ref{thm:trunc} and \ref{ubg}) and the UMP algebras. Motivated by this correspondence, we propose that the ramif\mbox ications graph can be considered as a useful tool that concentrates the essential combinatorial features of any algebra. The constructions and techniques introduced in this paper pave the way for exploring new directions to investigate and classify the most relevant properties of an algebra and its representations.

This article is organized as follows. In Section 2, we f\mbox ix some notations that will be used along this article as well as some preliminary properties of bound quiver algebras. In Section 3, we introduce the class of UMP algebras and some preliminary results in order to achieve the necessary and suf\mbox f\mbox icient conditions for a locally monomial and special multiserial algebra to be a UMP algebra. Section 4 is dedicated to a detailed introduction and study of the principal tools, including the \emph{$\omega$-relations} and maximal paths for each weakly connected component. These concepts are essential for proving the central result of this paper, which concerns the characterization mentioned above. Finally, in Section 5, we show some important consequences of the classif\mbox ication theorem for Brauer graph algebras. In particular, we prove that the ramif\mbox ications graph, along with several derived properties, can be used to uncover signif\mbox icant characteristics and essential combinatorial features of symmetric special biserial algebras. Through examples in this section, we also explore situations in which the conditions of our main results and the assumptions of our tools are challenged.

Along the text we have included illustrative examples for most concepts and def\mbox initions to aid the reader's understanding and practical use of the material. We refer the reader to look at e.g. \cite{E}, \cite{GS}, \cite{Gr-Sc} and the references therein in order to get further information regarding concepts from special biserial (as e.g. gentle), special multiserial algebras, their representation theory and their derived categories.

\section{Preliminaries}\label{Sec-Preliminaries}

Let $A$ be a f\mbox inite-dimensional algebra of the form $\Bbbk Q/I$ over an algebraically closed f\mbox ield $\Bbbk$ of arbitrary characteristic, where $Q$ is a f\mbox inite and connected quiver, and $I$ is an admissible ideal of $\Bbbk Q$. Let $R$ be a minimal set of relations such that $I=\langle R \rangle$. As usual, we denote by $Q_0$ (resp. by $Q_1$) the set of vertices (resp. the set of arrows) of $Q$. Also, $s(\alpha)$ (resp. $t(\alpha)$) denotes the vertex of $Q_0$ where the arrow $\alpha$ starts (resp. ends).

We denote by $e_i$ the trivial path at vertex $i\in Q_0$ and by $\mathcal{P}(Q)$ the set of all paths in $Q$. Notice that $\mathcal{P}(Q)$ is a semigroup where the operation is the concatenation of paths which satisf\mbox ies the cancellation property. Throughout the paper we will use the following terminology: given $u,v\in \mathcal{P}(Q)$, we say that $u$ divides $v$ (or $u$ is a divisor of $v$, or $u$ is a factor of $v$, or $v$ is factored by $u$), which is denoted by $u\mid v$ if and only if $u$ is a subpath of $v$. We also say that two paths are \textit{disjoint} if they have no common non-trivial divisors. A path is say to be \textit{repetition-free} if it has no repeated arrows as factors.

Let $\mathfrak{m}\in\mathcal{P}(Q)$. We say that $\mathfrak{m}+I$ is a \textit{maximal path} of $A=\Bbbk Q/I$ if $\mathfrak{m}\notin I$ and for every arrow $\alpha\in Q_1$ we have $\alpha \mathfrak{m}\in I$ and $\mathfrak{m}\alpha\in I$. We denote by $\mathcal{M}$ the set of maximal paths of $A$. We also say that two maximal paths $\mathfrak{m}+I$ and $\mathfrak{m}'+I$ are disjoint if for every pair of representatives $w$ of $\mathfrak{m}+I$ and $w'$ of $\mathfrak{m}'+I$, with $w,w'\in\mathcal{P}(Q)$, we have that $w$ and $w'$ are disjoint paths.

Given $w\in\mathcal{P}(Q)$, we write $w=w^0w^1\cdots w^{l_w}$ for the factorization of $w$ in terms of the arrows $w^0,w^1\dots, w^{l_w}\in Q_1$. Thus, the length of $w$ is $l_w+1$. Also, we set $s(w):=s(w^0)$ and $t(w):=t(w^{l_w})$. We def\mbox ine the following sets
$$
E_w:=\left\{u\in\mathcal{P}(Q)\mid w=uv, \ \text{ for some }   v\in\mathcal{P}(Q)\right\}$$
and $$T_w:=\left\{u\in\mathcal{P}(Q)\mid w=vu, \ \text{ for some }   v\in\mathcal{P}(Q)\right\}.
$$

\section{UMP algebras}\label{Sec:UMP algebras}

In this section we introduce the class of UMP algebras, which are algebras satisfying a very special condition. That is, in a nutshell, a UMP algebra is a bound quiver algebra $\Bbbk Q/I$ such that two dif\mbox ferent maximal paths in $\mathcal{M}$ have no common arrows (they are disjoint). We call this condition the \textit{Unique Maximal Path} property - UMP for short. In \cite{FGR1}, A. Franco, H. Giraldo and P. Rizzo presented the class of SUMP algebras which is a class of string algebras verifying the UMP property. This property turned out to be very important to describe combinatorially some indecomposable objects in the derived categories of such algebras, that is, this property allowed these authors to apply the same techniques developed by V. Bekkert and H. Merklen in \cite{Be-Me}, but in the more general case of SUMP algebras. We observe that there are algebras, which are not string algebras, such that the UMP property holds. This motivates the following def\mbox inition.

\begin{definition}\label{Def:UMP algebras}
Let $A=\Bbbk Q/I$ be a bound quiver algebra over an algebraically closed f\mbox ield $\Bbbk$. The algebra $A$ is a \emph{UMP algebra} if it satisf\mbox ies the \emph{Unique Maximal Path property}, i.e., every arrow in $Q_1$ can be extended to a unique maximal path of $\Bbbk Q/I$.
\end{definition}

In this paper we attempt to give necessary and suf\mbox f\mbox icient conditions for a locally monomial (see Def\mbox inition \ref{def:locmon}) special multiserial algebra to be a UMP algebra. In order to achieve this, we present f\mbox irst some preliminary results following the notations introduced in the previous section.

Following \cite[\S IV.6]{SY}, we say that $Q$ is a \textit{cyclic quiver} if $Q$ is as in Figure \ref{cycqu} for $k\geq 0$.
\begin{figure}[h!]
%\hspace{4mm}
\mypictureb[baseline=-22mm]
\caption{A cyclic quiver $Q$.}\label{cycqu}
\end{figure}
In particular, if $k=0$, then $Q_0$ consists of a unique vertex and $Q_1$ consists of a loop incident to such vertex.

\begin{definition}\label{dwau}
For each arrow $a\in Q_1$ we def\mbox ine a path $\omega_a$ in $\mathcal{P}(Q)$ as follows.
\begin{enumerate}[$i)$]
\item If $Q$ is a cyclic quiver, we f\mbox ix a repetition-free path of the form $\alpha_0\cdots \alpha_k$ in $\mathcal{P}(Q)$, where $\alpha_0,\ldots,\alpha_k$ are all the arrows of $Q$ (see e.g. Figure \ref{cycqu}). For each arrow $a\in Q_1$ we def\mbox ine $\omega_a:=\alpha_0\cdots \alpha_k$.
\item If $Q$ is not a cyclic quiver, we def\mbox ine $\omega_a$ in the following fashion: We def\mbox ine $\Phi_a$ as the set of all paths $\omega$ satisfying the following conditions.
\begin{enumerate}[$a)$]
\item $a$ is a subpath of $\omega$.
\item If $l_{\omega}>0$, then it holds that
\begin{equation}\label{ineqs}
    |\{\alpha\in Q_1\mid s(\alpha)=i\}|= 1 \ \text{  and } \ |\{\alpha\in Q_1 \mid t(\alpha)=i\}|= 1
\end{equation}
for every vertex $i\in\{t(\omega^j):0\leq j<l_{\omega}\}$.
\end{enumerate}
Then, we def\mbox ine $\omega_a$ as the path of maximal length in $\Phi_a$.
\end{enumerate}
\end{definition}

\begin{remark}\label{rnc}
\begin{enumerate}[$i)$]
\item For each arrow $a\in Q_1$, the path $\omega_a$ is well-def\mbox ined. To see this, we consider the following cases.\\
\textbf{Case 1.} If $Q$ is a cyclic quiver, the path $\omega_a$ is f\mbox ixed for all the arrows $a\in Q_1$, so we are done.\\
\textbf{Case 2.} Assume that $Q$ is not a cyclic quiver. Note that $\Phi_a\neq \emptyset$ because $a\in \Phi_a$. Moreover, the length of such paths is bounded because $Q$ is f\mbox inite and not cyclic. Suppose that $\omega$ and $\omega'$ are two di\mbox f\mbox ferent paths of maximal length in $\Phi_a$. Let $x$ be the greatest subpath of both $\omega$ and $\omega'$ such that $a$ is a subpath of $x$. Write $\omega=uxv$ and $\omega'=u'xv'$ for some paths $u,v,u',v'$. Then, at least one of the paths $u$, $v$, $u'$ or $v'$ is non-trivial. Without loss of generality, suppose that $u$ is non-trivial. Since $t(u)=t(u')=s(x)$, the condition $b)$ in Def\mbox inition \ref{dwau} and the choice of $x$ imply that $u'$ is trivial, and hence $uxv'$ is a longer path than $\omega'$ satisfying the conditions $a)$ and $b)$. This is a contradiction with the choice of $\omega'$. 
\item The path $\omega_a$ is repetition-free. Let's verify this claim as follows. If $Q$ is cyclic, the claim follows directly from Def\mbox inition \ref{dwau}$.i)$. Assume that $Q$ is not cyclic and suppose that $\omega_a$ is not a repetition-free path. Then, $\omega_a=\omega \alpha \omega'\alpha \omega''$ for some $\omega,\omega',\omega''\in \mathcal{P}(Q)$ and $\alpha\in Q_1$. Since $\omega_a$ satisf\mbox ies condition $b)$, it follows that, for every vertex $j$ in the cyclic subpath $\alpha \omega'$ of $\omega_a$, there extist exactly one arrow starting at $j$ and exactly one arrow ending at $j$. Hence, $Q$ is cyclic, which is a contradiction. This proves that $\omega_a$ is a repetition-free path.
\end{enumerate}
\end{remark}

Roughly speaking, $\omega_a$ can be interpreted as the repetition-free path of maximal length in $Q$ that contains the arrow $a$ and has no {\it ramif\mbox ications}, i.e., vertices where at least two arrows end and/or start. For simplicity, we write $l_a$ instead of $l_{\omega_a}$, where $l_{\omega_a}+1$ is the length of the path $\omega_a$. Note that if $a$ and $b$ are arrows of $Q$, then $b$ is a subpath of $\omega_a$ if and only if $\omega_a=\omega_b$.

\begin{exam}\label{exwa}

\begin{enumerate}[$i)$]

\item Consider the following quiver.
$$\xymatrix{& & \cdot\ar[dl]_{a} & & & & & & \\ 
Q: &\cdot\ar[rd]_b &  &\cdot \ar[ul]_{d}\ar@<0.7ex>[rr]^{f} \ar@<-0.5ex>[rr]_{e} & & \cdot \ar[rr]_g & & \cdot \ar[rr]_h & & \cdot \\
& & \cdot \ar[ur]_{c} & & & & & & }$$
Then, we have that $\omega_a=\omega_b=\omega_c=\omega_d=dabc$, $\omega_e=e$, $\omega_f=f$, and $\omega_g=\omega_h=gh$.

\item Consider the following quiver.
$$Q:
\ \xymatrix{\cdot \ar@(ru,lu)[]_a \ar@(ld,rd)[]_b \ar[rr]^c & & \cdot \ar@<-0.5ex>[rr]_d & & \cdot \ar@<-0.5ex>[ll]_e}$$.
Then, we have that $\omega_a=a$, $\omega_b=b$, $\omega_c=c$ and $\omega_d=\omega_e=de$.

\item Consider the following quiver.
$$Q:
\xymatrix{\cdot \ar@<-0.5ex>[rr]_b & & \cdot \ar@<-0.5ex>[ll]_a \ar@<-0.5ex>[rr]_e \ar@<-0.5ex>[d]_c & & \cdot \ar@<-0.5ex>[ll]_f\\
& & \cdot \ar@<-0.5ex>[u]_d & & }$$
Then, we have that $\omega_a=\omega_b=ab$, $\omega_c=\omega_d=cd$, and $\omega_e=\omega_f=ef$.

\end{enumerate}

\end{exam}

\begin{definition}\label{def:ramgraph}
Let $A=\Bbbk Q/I$ be a bound quiver algebra. We denote by $G_{Q,I}$ the oriented graph $(V,E)$, which we call \emph{the ramif\mbox ications graph associated to $(Q,I)$}, putting the set of vertices $V=\{\omega_a\mid\,a\in Q_1\}$ and for any pair $a,b\in Q_1$, there exists a directed edge $\delta\in E$ from $\omega_a$ to $\omega_b$ if and only if $\omega_a\neq\omega_b$, $t(\omega_a)=s(\omega_b)$ and $\omega_a^{l_a}\omega_b^{0}\notin I$.
\end{definition}

Observe that, from Def\mbox inition \ref{def:ramgraph}, directly follows that $G_{Q,I}$ has no loops.

In this article, we will understand by a {\it weakly connected component} of (the oriented graph) $G_{Q,I}$ the subgraph of $G_{Q,I}$ whose underlying graph, obtained by ignoring the orientations of the edges in $G_{Q,I}$, is a connected component of this latter graph. We denote by $\mathcal{D}_{Q,I}$ the set of weakly connected components of $G_{Q,I}$. For each $N\in \mathcal{D}_{Q,I}$, we denote by $Q_N$ the subquiver of $Q=(Q_0,Q_1,s,t)$ def\mbox ined by the paths $\omega_a$ which are vertices in the component $N$. More precisely, $Q_N=((Q_N)_0,(Q_N)_1,s_N,t_N)$ where the set of arrows is def\mbox ined by
$$
(Q_N)_1:=\{\alpha\in Q_1\mid\,\omega_{\alpha}\,\,\text{is a vertex in}\,\,N\},
$$
the set of vertices is def\mbox ined by
$$
(Q_N)_0:=\{i\in Q_0\mid\,i\in\{s(\alpha),t(\alpha)\}\,\text{for some}\, \alpha \in (Q_N)_1\},
$$
and $s_N:=s|_{(Q_N)_1}$ and $t_N:=t|_{(Q_N)_1}$. In consequence, from the subalgebra $\Bbbk Q_N$ of $\Bbbk Q$, we denote by $I_N:=I\cap\Bbbk Q_N$ the induced ideal, which allows us to def\mbox ine the algebra $A_N:=\Bbbk Q_N/I_N$ and the set of maximal paths $\mathcal{M}_N$ in $A_N$. An important result, which will establish some conditions for our classif\mbox ication, is as follows:

\begin{proposition}\label{prop:monoalg}
Let $I$ be an admissible ideal of the path algebra $\Bbbk Q$. Then the ideal $I_N$ is an admissible ideal of $\Bbbk Q_N$ for all $N\in\mathcal{D}_{Q,I}$.
\end{proposition}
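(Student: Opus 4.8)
The plan is to verify the two defining inclusions of an admissible ideal for $I_N$ by transferring them from $I$ across the intersection with $\Bbbk Q_N$. Write $\mathfrak{a}$ for the arrow ideal of $\Bbbk Q$ (the two-sided ideal generated by $Q_1$) and $\mathfrak{a}_N$ for the arrow ideal of $\Bbbk Q_N$. Since $Q_N$ is a subquiver of $Q$, every path of $Q_N$ is a path of $Q$, so $\Bbbk Q_N$ is the $\Bbbk$-span of a subset of the path basis of $\Bbbk Q$; as the concatenation of two $Q_N$-paths is again a $Q_N$-path or is zero, $\Bbbk Q_N$ is a subalgebra of $\Bbbk Q$ (not necessarily containing the unit of $\Bbbk Q$). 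From this it follows at once that $I_N=I\cap\Bbbk Q_N$ is a two-sided ideal of $\Bbbk Q_N$: for $x\in I_N$ and $y\in\Bbbk Q_N$ one has $xy,yx\in I$ (as $I$ is an ideal of $\Bbbk Q$) and $xy,yx\in\Bbbk Q_N$ (as $\Bbbk Q_N$ is closed under products), whence both lie in $I\cap\Bbbk Q_N=I_N$.

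The key step is the elementary but crucial identity
\[
\mathfrak{a}_N^{\,k}=\mathfrak{a}^k\cap\Bbbk Q_N\qquad\text{for every } k\geq 1.
\]
First I would recall that $\mathfrak{a}^k$ is exactly the $\Bbbk$-span of all $Q$-paths of length at least $k$, while $\mathfrak{a}_N^{\,k}$ is the $\Bbbk$-span of all $Q_N$-paths of length at least $k$. The inclusion $\mathfrak{a}_N^{\,k}\subseteq\mathfrak{a}^k\cap\Bbbk Q_N$ is clear, since $Q_N$-paths of length $\geq k$ are $Q$-paths of length $\geq k$ lying in $\Bbbk Q_N$. For the reverse inclusion I would invoke that the paths of $Q$ form a $\Bbbk$-basis of $\Bbbk Q$: an element of $\mathfrak{a}^k\cap\Bbbk Q_N$ expands uniquely in this basis using only $Q_N$-paths (because it lies in $\Bbbk Q_N$) and only paths of length $\geq k$ (because it lies in $\mathfrak{a}^k$), hence only $Q_N$-paths of length $\geq k$, so it lies in $\mathfrak{a}_N^{\,k}$.

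With this identity the two inclusions are immediate. Since $I$ is admissible, there is an integer $m\geq 2$ with $\mathfrak{a}^m\subseteq I\subseteq\mathfrak{a}^2$. For the upper bound, $I_N=I\cap\Bbbk Q_N\subseteq\mathfrak{a}^2\cap\Bbbk Q_N=\mathfrak{a}_N^{\,2}$; for the lower bound, $\mathfrak{a}_N^{\,m}=\mathfrak{a}^m\cap\Bbbk Q_N\subseteq I\cap\Bbbk Q_N=I_N$. Thus $\mathfrak{a}_N^{\,m}\subseteq I_N\subseteq\mathfrak{a}_N^{\,2}$ with the same $m\geq 2$, which is precisely the statement that $I_N$ is admissible; in particular $I_N$ is proper, since $\mathfrak{a}_N^{\,2}\subseteq\mathfrak{a}_N\subsetneq\Bbbk Q_N$ (the idempotents $e_i$, $i\in(Q_N)_0$, do not lie in $\mathfrak{a}_N$).

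I expect the only point genuinely requiring care to be the identity $\mathfrak{a}_N^{\,k}=\mathfrak{a}^k\cap\Bbbk Q_N$, whose reverse inclusion rests on the linear independence of paths; everything else is a direct transfer of the admissibility inequalities across the intersection with $\Bbbk Q_N$. As a final remark I would note that finiteness of $Q_N$ (inherited from $Q$) together with $\mathfrak{a}_N^{\,m}\subseteq I_N$ ensures that $A_N=\Bbbk Q_N/I_N$ is finite-dimensional, consistently with its being a bound quiver algebra.
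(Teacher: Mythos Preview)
Your proof is correct and follows essentially the same approach as the paper: both arguments transfer the admissibility inclusions across the intersection with $\Bbbk Q_N$ via the identity $\mathfrak{a}_N^{\,k}=\mathfrak{a}^k\cap\Bbbk Q_N$ (the paper states this identity without justification, whereas you supply the basis argument for it). The paper's proof is terser but otherwise identical in structure.
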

\begin{proof}
Since $I$ is an admissible ideal, then $\mathcal{R}_{Q}^m\subseteq I\subseteq \mathcal{R}_{Q}^2$, for some $m\geq2$, where $\mathcal{R}_Q$ is the so-called \textit{arrow ideal} of $\Bbbk Q$. Hence, as $\mathcal{R}_{Q_N}^m=\mathcal{R}_{Q}^m\cap \Bbbk Q_N$, we obtain $\mathcal{R}_{Q_N}^m\subseteq I_N\subseteq\mathcal{R}_{Q_N}^2$, which proofs the proposition. 
\end{proof}

\begin{exam}\label{exgdr}

\begin{enumerate}[$i)$]

\item In the Example \ref{exwa}.$i)$, take $I=\langle cd,cf,abc,eg\rangle$. We have the following ramif\mbox ications graph.
$$G_{Q,I}: \ \xymatrix{\cdot_{\omega_a} \ar[rr] &  & \cdot_{\omega_e} \\ \ \ \cdot_{\omega_f} \ar[rr] & & \cdot_{\omega_g}}$$
\vspace{5mm}

\noindent
For this graph, $\mathcal{D}_{Q,I}=\{L,N\}$, where $\xymatrix{L:\ \cdot_{\omega_a} \ar[rr] &  & \cdot_{\omega_e}}$ and $\xymatrix{N:\ \cdot_{\omega_f} \ar[rr] &  & \cdot_{\omega_g}}$. Then,
$$\xymatrix{& & \cdot\ar[dl]_{a} & & & \\ 
Q_L: & \cdot\ar[rd]_b &  & \cdot \ar[ul]_{d} \ar[rr]_e & & \cdot\ ,\\
& & \cdot \ar[ur]_{c} & & & }\ \xymatrix{ \\ I_L=\langle cd, abc\rangle,\ \mathcal{M}_L=\{dab, bce\}}\\ $$
$$Q_N: \xymatrix{\cdot \ar[rr]^f & & \cdot \ar[rr]^g & & \cdot \ar[rr]^h & & \cdot \ ,} \ I_N=\langle 0\rangle,\ \mathcal{M}_N=\{fgh\}.$$

\item In the Example \ref{exwa}.$ii)$, take $I=\langle ab,ac,ba,bc,ed,a^2-b^2\rangle$. We have the following ramif\mbox ications graph.
$$G_{Q,I}: \ \xymatrix{\cdot_{\omega_a} &  & \cdot_{\omega_b} \\ \ \ \cdot_{\omega_c} \ar[rr] & & \cdot_{\omega_d}}$$
Hence, $\mathcal{D}_{Q,I}=\{N_1,N_2,N_3\}$, where $N_1:\ \cdot_{\omega_a}$, $N_2:\ \cdot_{\omega_b}$, and $N_3:\ \xymatrix{ \cdot_{\omega_c} \ar[rr] & & \cdot_{\omega_d} }$. Then,
$$Q_{N_1}: \xymatrix{\cdot \ar@(ur,ul)[]_a},\ \ I_{N_1}=\langle a^3\rangle,\ \mathcal{M}_{N_1}=\{a^2\},$$
$$Q_{N_2}: \xymatrix{\cdot \ar@(ur,ul)[]_b},\ \ I_{N_2}=\langle b^3\rangle,\ \mathcal{M}_{N_2}=\{b^2\},$$
$$Q_{N_3}: \xymatrix{\cdot \ar[rr]^c & & \cdot \ar@<-0.5ex>[rr]_d & & \cdot \ar@<-0.5ex>[ll]_e},\ \ I_{N_3}=\langle ed\rangle,\ \mathcal{M}_{N_3}=\{cde\}.$$

\item In the Example \ref{exwa}.$iii)$, take $I=\langle ba,be,dc,de,fa,fc,abcd-ef\rangle$. We have the following ramif\mbox ications graph.
$$G_{Q,I}:\ \xymatrix{
\cdot_{\omega_a} \ar@<-0.5ex>[rr] & & \cdot_{\omega_c} \ar@<-0.5ex>[ll]  & \cdot_{\omega_e}}$$
Hence, $\mathcal{D}_{Q,I}=\{L,N\}$, where $L:\ \xymatrix{ \cdot_{\omega_a} \ar@<-0.5ex>[rr] & & \cdot_{\omega_c} \ar@<-0.5ex>[ll]}$, and $N:\ \cdot_{\omega_e}$. Then,
$$Q_L: \xymatrix{\cdot \ar@<-0.5ex>[rr]_b & & \cdot \ar@<-0.5ex>[ll]_a \ar@<-0.5ex>[rr]_c & & \cdot \ar@<-0.5ex>[ll]_d},\ I_L=\langle ba,dc,abcda,dabcd\rangle,\ \mathcal{M}_L=\{abcd,bcdabc\},$$
$$Q_N: \xymatrix{\cdot \ar@<-0.5ex>[rr]_e & & \cdot \ar@<-0.5ex>[ll]_f},\ I_N=\langle efe,fef\rangle,\ \mathcal{M}_N=\{ef,fe\}.$$

\end{enumerate}

\end{exam}

\begin{remark}\label{rem:paths1}
Any non-trivial path $u$ in $\mathcal{P}(Q)$ is a subpath of $\omega_{\alpha_1}\cdots\omega_{\alpha_k}$, for some $\alpha_1,\ldots,\alpha_k\in Q_1$. We show this as follows: If $Q$ is cyclic, this claim is true because every path is a subpath of a power of the unique path of the form $\omega_a$ in $Q$, with $a\in Q_1$. Assume that $Q$ is not cyclic. Recall that we write $u=u^0\cdots u^{l_u}$ for the decomposition of $u$ as product of arrows. We def\mbox ine
$$
H=\{t(u^j):0\leq j<l_{u},\mbox{ and }|\{\alpha\in Q_1\mid s(\alpha)=t(u^j)\}|\neq 1\mbox{ or }|\{\alpha\in Q_1 \mid t(\alpha)=t(u^j)\}|\neq 1\}.
$$
If $H=\emptyset$, then $u\in \Phi_{u^0}$. Hence, $u$ is a subpath of $\omega_{u^0}$, so we are done. Now if $H\neq\emptyset$, then there exist $m>0$ and $0\leq j_1<\cdots<j_m< l_u$ such that $H=\{t(u^{j_1}),t(u^{j_2}),\ldots, t(u^{j_m})\}$ and, consequently, $u$ is a subpath of $\omega_{u^{j_1}}\omega_{u^{j_2}}\cdots\omega_{u^{l_u}}$.
\end{remark}

\begin{lemma}\label{lem:components}
Let $u$ be a non-trivial path in $\mathcal{P}(Q)$ such that there are no relations of the form $\omega_a^{l_a}\omega_b^0\in I$ for which $\omega_a^{l_a}\omega_b^0\mid u$. Then, $u\in\mathcal{P}(Q_N)$ for a unique component $N\in \mathcal{D}_{Q,I}$.
\end{lemma}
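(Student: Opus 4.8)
The plan is to decompose the path $u$ according to its ramification vertices and then show that all the resulting pieces $\omega_{u^{j}}$ lie in a single weakly connected component. First I would invoke Remark \ref{rem:paths1}, which guarantees that $u$ is a subpath of a product $\omega_{u^{j_1}}\omega_{u^{j_2}}\cdots\omega_{u^{l_u}}$, where the indices $j_1<\cdots<j_m$ record exactly the positions at which $u$ passes through a ramification vertex (a vertex failing the in/out-degree-one condition \eqref{ineqs}). In the degenerate case $H=\emptyset$ the path $u$ lies inside a single $\omega_{u^0}$, hence trivially in one $\mathcal{P}(Q_N)$, so I would set that aside and concentrate on $H\neq\emptyset$.

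Next I would establish that consecutive pieces $\omega_{u^{j_i}}$ and $\omega_{u^{j_{i+1}}}$ belong to the same component. The key observation is that at the ramification vertex $t(u^{j_i})=s(\omega_{u^{j_{i+1}}})$ we have two genuinely distinct maximal non-ramifying paths meeting, so $\omega_{u^{j_i}}\neq\omega_{u^{j_{i+1}}}$, and moreover the concatenation through this vertex appears as a factor of $u$. Using the hypothesis that no relation of the form $\omega_a^{l_a}\omega_b^0\in I$ divides $u$, I would argue that the particular product $\omega_{u^{j_i}}^{\,l}\,\omega_{u^{j_{i+1}}}^{0}\notin I$, which is precisely the condition in Definition \ref{def:ramgraph} for there to be a directed edge between the two vertices $\omega_{u^{j_i}}$ and $\omega_{u^{j_{i+1}}}$ of $G_{Q,I}$. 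Chaining these edges shows that $\omega_{u^{j_1}},\ldots,\omega_{u^{l_u}}$ all sit in one weakly connected component $N$, and therefore every arrow appearing in $u$ belongs to $(Q_N)_1$; this gives $u\in\mathcal{P}(Q_N)$.

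Finally, I would address uniqueness. Since the vertex set $V$ of $G_{Q,I}$ is partitioned into its weakly connected components, each $\omega_a$ lies in exactly one component, so if $u\in\mathcal{P}(Q_N)\cap\mathcal{P}(Q_{N'})$ then both $N$ and $N'$ would contain the vertex $\omega_{u^0}$, forcing $N=N'$. The main obstacle I anticipate is the middle step: pinning down the precise arrow at the end of $\omega_{u^{j_i}}$ and the precise arrow at the start of $\omega_{u^{j_{i+1}}}$ so that the factor of $u$ straddling the ramification vertex really is $\omega_{u^{j_i}}^{\,l_{u^{j_i}}}\omega_{u^{j_{i+1}}}^{0}$ (up to the correct identification of divisibility), and then translating the divisibility hypothesis on $u$ into the non-membership $\omega_{u^{j_i}}^{\,l}\omega_{u^{j_{i+1}}}^{0}\notin I$ exactly as required by the edge condition. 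Care is needed because $\omega_a$ is defined as a maximal non-ramifying path, so one must check that the arrows of $u$ between two consecutive ramification vertices trace out exactly one such $\omega$, using the repetition-freeness from Remark \ref{rnc}$.ii)$ and the degree conditions \eqref{ineqs}.
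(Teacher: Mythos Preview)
Your proposal is correct and follows essentially the same route as the paper's proof: invoke Remark~\ref{rem:paths1} to write $u$ as a subpath of $\omega_{\alpha_1}\cdots\omega_{\alpha_k}$, handle the case $k=1$ separately, and for $k>1$ use the hypothesis to conclude that each $\omega_{\alpha_i}^{l_i}\omega_{\alpha_{i+1}}^0\notin I$, hence an edge joins $\omega_{\alpha_i}$ to $\omega_{\alpha_{i+1}}$ in $G_{Q,I}$, placing all the $\omega_{\alpha_i}$ in one component. The paper's argument is terser and does not spell out the ``obstacle'' you flag (that the two-arrow factor at each ramification vertex really is $\omega_{\alpha_i}^{l_{\alpha_i}}\omega_{\alpha_{i+1}}^0$ and divides $u$), but this follows directly from how the decomposition in Remark~\ref{rem:paths1} is built from the set $H$, so your caution there is warranted but not a genuine gap.
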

\begin{proof}
From Remark \ref{rem:paths1}, we have that $u$ is a subpath of a path of the form $\omega_{\alpha_1}\cdots\omega_{\alpha_k}$, for some $\alpha_1,\ldots,\alpha_k\in Q_1$. If $k=1$, then $u$ is a path in the quiver induced by the weakly connected component that contain the vertex $\omega_{u^0}$. If $k>1$, there is an edge from $\omega_{\alpha_i}$ to $\omega_{\alpha_{i+1}}$ in the graph $G_{Q,I}$, because  $\omega_{\alpha_i}^{l_i}\omega_{\alpha_{i+1}}^0\notin I$ for all $i\in\{1,\ldots,k-1\}$. Hence, there exists a unique weakly connected component $N$ containing $\omega_{\alpha_1},\ldots,\omega_{\alpha_k}$ as vertices and thus $u\in\mathcal{P}(Q_N)$.
\end{proof}

\begin{remark}\label{rem:unicomp}\

\begin{enumerate}

\item[i)] Clearly, if $u$ is a non-zero path or is a zero relation in $I$ without subpaths of the form $\omega_a^{l_a}\omega_b^0$, with $a,b\in Q_1$, then $u$ belongs to $Q_N$ for a unique $N\in\mathcal{D}_{Q,I}$. If $u$ is a path as in the hypothesis of Lemma \ref{lem:components}, then we denote by $N(u)$ the unique weakly connected component such that $u\in \mathcal{P}(Q_{N(u)})$ guaranteed by this lemma.

\item[ii)] For any $N\in\mathcal{D}_{Q,I}$, we denote by $\mathcal{M}_N$ the set of maximal paths of $A_N$. We def\mbox ine the function $f_N:\mathcal{M}_N\rightarrow A$ by $\mathfrak{m}+I_N\mapsto \mathfrak{m}+I$, which is well-def\mbox ined since $I_N\subseteq I$.
\end{enumerate}
\end{remark}

\begin{theorem}\label{thm:maxUMP}
Let $\mathcal{M}$ be the set of maximal paths of a bound quiver algebra $A=\Bbbk Q/I$. Then, under the notations in Remark \ref{rem:unicomp}\,ii), we have that $\mathcal{M}=\bigcup\limits_{N\in\mathcal{D}_{Q,I}}f_N(\mathcal{M}_N)$.
\end{theorem}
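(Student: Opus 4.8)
The plan is to establish the set equality by a double inclusion, leveraging two facts developed above: first, Lemma \ref{lem:components} together with Remark \ref{rem:unicomp}\,i), which assigns to each path avoiding the monomial relations $\omega_a^{l_a}\omega_b^0\in I$ a unique weakly connected component; and second, the \emph{no ramifications} property of the paths $\omega_a$ (Remark \ref{rnc} and the observation recorded after Definition \ref{dwau}) together with the edge criterion of Definition \ref{def:ramgraph}. Throughout I use that a maximal path is non-trivial (since $I$ is admissible no arrow lies in $I$, so a trivial path cannot be maximal in a connected $Q$ with arrows) and that $\mathfrak{m}\notin I$ forces every subpath of $\mathfrak{m}$ to avoid $I$, because $I$ is an ideal.

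For the inclusion $\mathcal{M}\subseteq\bigcup\limits_{N\in\mathcal{D}_{Q,I}}f_N(\mathcal{M}_N)$, I would start with $\mathfrak{m}+I\in\mathcal{M}$. Since $\mathfrak{m}\notin I$, no monomial $\omega_a^{l_a}\omega_b^0\in I$ divides $\mathfrak{m}$, so Lemma \ref{lem:components} provides a unique $N:=N(\mathfrak{m})$ with $\mathfrak{m}\in\mathcal{P}(Q_N)$. I then check that $\mathfrak{m}+I_N\in\mathcal{M}_N$: indeed $\mathfrak{m}\notin I_N$ because $I_N\subseteq I$; and for every $\alpha\in(Q_N)_1$ the composites $\alpha\mathfrak{m},\mathfrak{m}\alpha$ are paths of $Q_N$ (or zero), and they lie in $I$ by maximality of $\mathfrak{m}$ in $A$, hence in $I\cap\Bbbk Q_N=I_N$. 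Thus $f_N(\mathfrak{m}+I_N)=\mathfrak{m}+I$, giving the inclusion.

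The substantial direction is $\bigcup\limits_{N\in\mathcal{D}_{Q,I}}f_N(\mathcal{M}_N)\subseteq\mathcal{M}$. Fix $\mathfrak{m}+I_N\in\mathcal{M}_N$; then $\mathfrak{m}\in\mathcal{P}(Q_N)$ and $\mathfrak{m}\notin I_N=I\cap\Bbbk Q_N$, which immediately yields $\mathfrak{m}\notin I$. It remains to verify that $\mathfrak{m}$ cannot be extended inside $A$, i.e. $\alpha\mathfrak{m}\in I$ and $\mathfrak{m}\alpha\in I$ for every $\alpha\in Q_1$. I would treat the left case (the right case being symmetric): assuming $\alpha\mathfrak{m}\notin I$ I will show $\alpha\in(Q_N)_1$, after which maximality of $\mathfrak{m}$ in $A_N$ forces $\alpha\mathfrak{m}\in I_N\subseteq I$, a contradiction. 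Writing $\mathfrak{m}=\mathfrak{m}^0\cdots\mathfrak{m}^{l}$, one has $t(\alpha)=s(\mathfrak{m}^0)$. If $\mathfrak{m}^0$ is not the first arrow of $\omega_{\mathfrak{m}^0}$, then $s(\mathfrak{m}^0)$ is an internal vertex of $\omega_{\mathfrak{m}^0}$, so by \eqref{ineqs} the only arrow ending there is the predecessor of $\mathfrak{m}^0$ in $\omega_{\mathfrak{m}^0}$, forcing $\alpha\in(Q_N)_1$. Symmetrically, if $\alpha$ is not the last arrow of $\omega_\alpha$, then its successor in $\omega_\alpha$ must equal $\mathfrak{m}^0$, whence $\omega_\alpha=\omega_{\mathfrak{m}^0}$ by the observation after Definition \ref{dwau}, and again $\alpha\in(Q_N)_1$. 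In the remaining case $\alpha=\omega_\alpha^{l_\alpha}$, $\mathfrak{m}^0=\omega_{\mathfrak{m}^0}^0$, $t(\omega_\alpha)=s(\omega_{\mathfrak{m}^0})$, and $\omega_\alpha^{l_\alpha}\omega_{\mathfrak{m}^0}^0=\alpha\mathfrak{m}^0\notin I$ (it divides $\alpha\mathfrak{m}$); if $\omega_\alpha=\omega_{\mathfrak{m}^0}$ then $\alpha\in(Q_N)_1$, and otherwise Definition \ref{def:ramgraph} produces an edge $\omega_\alpha\to\omega_{\mathfrak{m}^0}$ in $G_{Q,I}$, placing $\omega_\alpha$ in the same component $N$ and so once more $\alpha\in(Q_N)_1$.

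The main obstacle is precisely this reverse inclusion: ruling out that a path which is maximal inside the subalgebra $A_N$ admits an extension by an arrow of $Q$ outside $Q_N$ that stays out of $I$. The delicate point is the case analysis on whether $\alpha$ and $\mathfrak{m}^0$ sit at the boundary (the first or last arrow) of their respective $\omega$-paths, since only there can a genuine ramification vertex intervene; this is exactly the situation in which the edge criterion $\omega_a^{l_a}\omega_b^0\notin I$ of $G_{Q,I}$ is indispensable to conclude that the extending arrow already belongs to the same weakly connected component.
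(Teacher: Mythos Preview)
Your proof is correct and follows essentially the same double-inclusion argument as the paper: the forward inclusion uses Lemma~\ref{lem:components} to place $\mathfrak{m}$ in a unique $Q_N$ and then intersects with $\Bbbk Q_N$, while the reverse inclusion shows that an extending arrow $\alpha$ with $\alpha\mathfrak{m}\notin I$ must already lie in $(Q_N)_1$. The only difference is expository: where the paper asserts in one line that $\alpha\mathfrak{m}^0\notin I$ implies $\omega_\alpha=\omega_{\mathfrak{m}^0}$ or an edge $\omega_\alpha\to\omega_{\mathfrak{m}^0}$ exists in $G_{Q,I}$, you unpack this into the three-case analysis on whether $\alpha$ and $\mathfrak{m}^0$ sit at the boundary of their respective $\omega$-paths, which is exactly the content hidden behind that implication.
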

\begin{proof}
For any $\mathfrak{m}+I\in\mathcal{M}$ we will prove that $\mathfrak{m}+I_{N(\mathfrak{m})}\in\mathcal{M}_{N(\mathfrak{m})}$ (see Remark \ref{rem:unicomp}\,i)). Let $\alpha\in (Q_{N(\mathfrak{m})})_1$ such that $t_{N(\mathfrak{m})}(\alpha)=s_{N(\mathfrak{m})}(\mathfrak{m})$. Since $\mathfrak{m}$ is a maximal path, we have that $\alpha \mathfrak{m}\in I$ and, due to $\alpha\mathfrak{m}\in\mathcal{P}(Q_{N(\mathfrak{m})})$, we conclude $\alpha \mathfrak{m}\in I_{N(\mathfrak{m})}$. Similarly, $\mathfrak{m}\alpha\in I_{N(\mathfrak{m})}$, if $s_{N(\mathfrak{m})}(\alpha)=t_{N(\mathfrak{m})}(\mathfrak{m})$. Then, $\mathfrak{m}+I_{N(\mathfrak{m})}\in\mathcal{M}_{N(\mathfrak{m})}$ and, consequently, $\mathfrak{m}+I=f_{N(\mathfrak{m})}(\mathfrak{m}+I_{N(\mathfrak{m})})\in f_{N(\mathfrak{m})}(\mathcal{M}_{N(\mathfrak{m})})$. 

Conversely, we consider $N\in\mathcal{D}_{Q,I}$, $\mathfrak{m}+I_{N}\in M_{N}$ and $\alpha\in Q_1$. We will prove that $\alpha \mathfrak{m}\in I$. Suppose, by the contrary, that $\alpha \mathfrak{m}\notin I$. Then, $\alpha m^0\notin I$. In consequence  $\omega_{\alpha}=\omega_{m^0}$ or there is an edge from $\omega_{\alpha}$ to $\omega_{m^0}$ in the graph $G_{Q,I}$. In any case, $\omega_{\alpha}$ and $\omega_{m^0}$ are in the same weakly connected component of $G_{Q,I}$ and thus, $N(\alpha)=N(m^0)=N(m)=N$.
In particular, $\alpha\in(Q_N)_1$. Now, since $\mathfrak{m}+I_N\in\mathcal{M}_N$, we have that $\alpha\mathfrak{m}\in I_N\subseteq I$, which contradicts the fact that $\alpha\mathfrak{m}\notin I$. Hence, $\alpha\mathfrak{m}\in I$. Analogously, we have that $\mathfrak{m}\alpha\in I$. Thus, $f_N(\mathfrak{m}+I_N)=\mathfrak{m}+I\in\mathcal{M}$, i.e., $\bigcup\limits_{N\in\mathcal{D}_{Q,I}}f_N(\mathcal{M}_N)\subseteq \mathcal{M}$, which proves the theorem.
\end{proof}

\begin{corollary}\label{cor:local}
Let $A=\Bbbk Q/I$ be a bound quiver algebra. Then, $A$ is a UMP algebra if and only if $A_N$ is a UMP algebra for all $N\in \mathcal{D}_{Q,I}$.
\end{corollary}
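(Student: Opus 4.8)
The plan is to reduce the statement entirely to Theorem \ref{thm:maxUMP}, which gives the decomposition $\mathcal{M}=\bigcup_{N\in\mathcal{D}_{Q,I}}f_N(\mathcal{M}_N)$, together with one elementary bookkeeping observation: since each vertex $\omega_a$ of $G_{Q,I}$ lies in exactly one weakly connected component, the assignment $\alpha\mapsto\omega_\alpha$ partitions the arrows as $Q_1=\bigsqcup_{N\in\mathcal{D}_{Q,I}}(Q_N)_1$. I will phrase the UMP property arrow-wise: $A$ is UMP precisely when each $\alpha\in Q_1$ divides exactly one maximal path in $\mathcal{M}$, and likewise $A_N$ is UMP precisely when each $\alpha\in(Q_N)_1$ divides exactly one maximal path in $\mathcal{M}_N$. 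The key link between the two levels is that for a maximal path $\mathfrak{m}+I\in\mathcal{M}$ the component $N(\mathfrak{m})$ is well defined (by Lemma \ref{lem:components}, since $\mathfrak{m}\notin I$ precludes any subpath of the form $\omega_a^{l_a}\omega_b^0$ lying in $I$), and if an arrow $\alpha$ divides $\mathfrak{m}$ then $\omega_\alpha$ is a vertex of $N(\mathfrak{m})$, so $N(\mathfrak{m})$ coincides with the unique component containing $\alpha$.

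For the forward implication, I assume $A$ is UMP and fix $N$ and $\alpha\in(Q_N)_1$. The unique $\mathfrak{m}+I\in\mathcal{M}$ dividing $\alpha$ satisfies $\mathfrak{m}+I=f_{N(\mathfrak{m})}(\mathfrak{m}+I_{N(\mathfrak{m})})$ with $\mathfrak{m}+I_{N(\mathfrak{m})}\in\mathcal{M}_{N(\mathfrak{m})}$ by Theorem \ref{thm:maxUMP}, and the component argument above forces $N(\mathfrak{m})=N$, giving a maximal path $\mathfrak{m}+I_N\in\mathcal{M}_N$ that extends $\alpha$. For uniqueness, if $\mathfrak{m}_1+I_N,\mathfrak{m}_2+I_N\in\mathcal{M}_N$ both contain $\alpha$, then $\mathfrak{m}_1+I,\mathfrak{m}_2+I\in\mathcal{M}$ both contain $\alpha$, so UMP of $A$ yields $\mathfrak{m}_1-\mathfrak{m}_2\in I$; as $\mathfrak{m}_1-\mathfrak{m}_2\in\Bbbk Q_N$ and $I_N=I\cap\Bbbk Q_N$, this descends to $\mathfrak{m}_1+I_N=\mathfrak{m}_2+I_N$. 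Hence $A_N$ is UMP.

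For the converse, I assume every $A_N$ is UMP and fix $\alpha\in Q_1$, letting $N$ be the unique component with $\alpha\in(Q_N)_1$. UMP of $A_N$ gives a unique $\mathfrak{m}+I_N\in\mathcal{M}_N$ dividing $\alpha$, and $f_N(\mathfrak{m}+I_N)=\mathfrak{m}+I\in\mathcal{M}$ divides $\alpha$, securing existence. For uniqueness, any $\mathfrak{m}'+I\in\mathcal{M}$ dividing $\alpha$ has $N(\mathfrak{m}')=N$ by the same component argument, so $\mathfrak{m}'+I_N\in\mathcal{M}_N$ divides $\alpha$; UMP of $A_N$ forces $\mathfrak{m}'+I_N=\mathfrak{m}+I_N$, and since $I_N\subseteq I$ this lifts to $\mathfrak{m}'+I=\mathfrak{m}+I$. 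Thus $A$ is UMP.

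The two implications are essentially symmetric once Theorem \ref{thm:maxUMP} is in hand, and the only genuinely delicate points are the two descent/ascent facts between equivalence classes: that $\mathfrak{m}_1+I=\mathfrak{m}_2+I$ with both representatives in $\mathcal{P}(Q_N)$ is equivalent to $\mathfrak{m}_1+I_N=\mathfrak{m}_2+I_N$ (using $I_N=I\cap\Bbbk Q_N$), and that a single dividing arrow $\alpha$ pins down the same component $N$ on both sides of the correspondence. I expect this last piece of bookkeeping to be the main, though modest, obstacle; everything else is routine.
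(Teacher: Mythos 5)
Your proposal is correct and follows essentially the same route as the paper: both directions rest on Theorem \ref{thm:maxUMP} together with the observation that an arrow $\alpha$ lies in $(Q_N)_1$ for a unique component $N$, which pins down $N(\mathfrak{m})$ for any maximal path containing $\alpha$. The paper merely states the forward direction as immediate from Theorem \ref{thm:maxUMP} and proves the converse by contraposition, whereas you spell out the descent between $I$ and $I_N=I\cap\Bbbk Q_N$ explicitly; the content is the same.
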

\begin{proof}
Clearly, if $A$ is a UMP algebra, by Theorem \ref{thm:maxUMP}, $A_N$ is a UMP algebra for every component $N\in\mathcal{D}_{Q,I}$. Conversely, suppose that $A$ is not a UMP algebra. Then, there exist $\mathfrak{m}+I,\mathfrak{n}+I\in\mathcal{M}$ such that $\alpha\mid\mathfrak{m}$ and $\alpha\mid\mathfrak{n}$, for some $\alpha\in Q_1$. Since $\alpha$ belongs to $(Q_N)_1$ for a unique $N\in\mathcal{D}_{Q,I}$, we obtain that $N(\mathfrak{m})=N(\alpha)=N(\mathfrak{n})=N$. In consequence, $\mathfrak{m}+I$ and $\mathfrak{n}+I$ belongs to $\mathcal{M}_N$, that is, $A_N$ is not a UMP algebra.
\end{proof}

\begin{exam}\label{eanum} In the following examples, we show how the algebras in Example \ref{exgdr} ilustrates Theorem \ref{thm:maxUMP} and Corollary \ref{cor:local}.
\begin{enumerate}[$i)$]
\item For the Example \ref{exgdr}.$i)$, we have that $\mathcal{M}=\{dab, bce,fgh\}$, $\mathcal{M}_L=\{dab, bce\}$ and $\mathcal{M}_N=\{fgh\}$. Here, we observe that $A$ is not UMP in accordance to the fact that $A_L$ is not UMP.
\item For the Example \ref{exgdr}.$ii)$, we have that $\mathcal{M}=\{a^2,cde\}$, $\mathcal{M}_{N_1}=\{a^2\}$, $\mathcal{M}_{N_2}=\{b^2\}$ and $\mathcal{M}_{N_3}=\{cde\}$, where we note that $A$ is UMP, according to the fact that all the algebras $A_{N_i}$ are UMP for $i\in \{1,2,3\}$.
\item For the Example \ref{exgdr}.$iii)$, we have that $\mathcal{M}=\{abcd,bcdabc,ef,fe\}$, $\mathcal{M}_L=\{abcd,bcdabc\}$ and $\mathcal{M}_N=\{ef,fe\}$, where we note that $A$ is not UMP, according to the fact that neither $A_L$ nor $A_N$ are UMP.
\end{enumerate}
\end{exam}

\section{The main result}
In this section we will assume that the bound quiver algebra $A=\Bbbk Q/I$ is a \emph{special multiserial algebra}. Precisely,  following \cite[Def\mbox inition 2.2]{GS}, a special multiserial algebra is a bound quiver algebra in which the following property holds: for every arrow $\alpha$ in $Q_1$ there is at most one arrow $\beta$ in $Q_1$ such that $\alpha\beta\notin I$ and at most one arrow $\gamma$ in $Q_1$ such that $\gamma\alpha\notin I$.

\begin{lemma}\label{lem:compchar}
Let $A=\Bbbk Q/I$ be a special multiserial algebra and let $N\in\mathcal{D}_{Q,I}$ be a weakly connected component of the ramif\mbox ications graph $G_{Q,I}$. Then $N$ has only one of the following forms, for some $n\in\mathbb{N}_{\geq0}$:
\begin{figure}[!htb]
   \begin{minipage}{0.48\textwidth}
\centering
 $ N:=\,\xymatrix{
\omega_0 \ar[r] & \omega_1\ar[r]  & \cdots\ar[r] & \omega_n}$
   \end{minipage}\hfill
   \begin{minipage}{0.48\textwidth}
 \centering
 $N:=\,\xymatrix{
\omega_0 \ar[r] & \omega_1\ar[r]  & \cdots\ar[r]
& \omega_n \ar@/^2pc/[lll]}$
\end{minipage}
\end{figure}
\end{lemma}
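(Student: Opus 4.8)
The plan is to reduce the statement to the elementary graph-theoretic fact that a finite directed graph in which every vertex has in-degree at most $1$ and out-degree at most $1$ splits, one weakly connected component at a time, into directed paths and directed cycles. All of the representation-theoretic content is concentrated in verifying these two degree bounds for $G_{Q,I}$, and this is exactly where the special multiserial hypothesis is used.

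First I would show that every vertex $\omega_a$ of $G_{Q,I}$ has out-degree at most $1$. Suppose $\omega_a\to\omega_b$ and $\omega_a\to\omega_{b'}$ are edges. Writing $\alpha:=\omega_a^{l_a}$ for the last arrow of $\omega_a$, the definition of $G_{Q,I}$ gives $\alpha\,\omega_b^0\notin I$ and $\alpha\,\omega_{b'}^0\notin I$, both genuine paths since $t(\alpha)=t(\omega_a)=s(\omega_b)=s(\omega_{b'})$. The special multiserial condition says there is \emph{at most one} arrow $\beta$ with $\alpha\beta\notin I$, so $\omega_b^0=\omega_{b'}^0=:\beta$. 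Then $\beta\mid\omega_b$ and $\beta\mid\omega_{b'}$, and by the observation that $\beta\mid\omega_x$ if and only if $\omega_x=\omega_\beta$ we conclude $\omega_b=\omega_\beta=\omega_{b'}$, so the two edges coincide. The in-degree bound is entirely symmetric: an edge $\omega_c\to\omega_a$ forces $\omega_c^{l_c}\,\omega_a^0\notin I$, so with $\beta:=\omega_a^0$ the last arrow $\gamma:=\omega_c^{l_c}$ satisfies $\gamma\beta\notin I$, and the second clause of the special multiserial condition pins down $\gamma$ and hence $\omega_c=\omega_\gamma$. I expect this to be the main, and essentially the only nontrivial, point; the delicate part is recognizing that an out-edge (resp.\ in-edge) of $\omega_a$ is controlled by the \emph{single} arrow $\omega_b^0$ (resp.\ $\omega_c^{l_c}$), together with the fact that a path of the form $\omega_x$ is determined by any one of its arrows.

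It then remains to carry out the graph-theoretic classification, which is routine. With in- and out-degree both at most $1$, the unique-out-edge and unique-in-edge assignments define a partial bijection of the finite vertex set $V$, and the weakly connected components of $G_{Q,I}$ are exactly its orbits, each of which is a finite directed cycle or a finite directed path. Concretely, passing to the underlying graph every vertex has degree at most $2$, so each component is a simple path or a simple cycle; reorienting, at any internal vertex the two incident edges can neither both point in (in-degree $\le 1$) nor both point out (out-degree $\le 1$), hence they are consistently oriented, forcing a directed cycle $\omega_0\to\cdots\to\omega_n\to\omega_0$ or a directed path $\omega_0\to\cdots\to\omega_n$. Finally I would note that every vertex of a directed cycle already has in- and out-degree exactly $1$, so it admits no further incident edge and therefore exhausts its weakly connected component; this shows the two forms are mutually exclusive and cover all cases, giving precisely the two pictures in the statement, with $n=1$ in the cyclic case reproducing the $2$-cycles such as the component $L$ of Example~\ref{exgdr}.$iii)$.
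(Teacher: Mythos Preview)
Your proof is correct and follows essentially the same approach as the paper: use the special multiserial condition to bound the in- and out-degree of every vertex of $G_{Q,I}$ by $1$, and then conclude by the elementary classification of such directed graphs. The paper's own proof is a two-line version of yours that leaves the graph-theoretic step implicit, whereas you spell out both the degree bounds and the path/cycle dichotomy more carefully.
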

\begin{proof}
Since $A$ is a special multiserial algebra, for any arrow  $\alpha$ in $Q_1$ there is at most one arrow $\beta\in Q_1$ such that $\omega_{\alpha}^{l_{\alpha}}\omega_{\beta}^0\notin I$ and  there is at most one arrow $\gamma\in Q_1$ such that $\omega_{\gamma}^{l_{\gamma}}\omega_{\alpha}^0\notin I$. Hence, we have that at most one edge in $G_{Q,I}$ ends in $\omega_{\alpha}$ and at most one edge in $G_{Q,I}$ starts in $\omega_{\alpha}$, which completes the proof.
\end{proof}
From the notation of Lemma \ref{lem:compchar}, we def\mbox ine the path $\omega(N):=\omega_0\cdots\omega_n$. The trivial path at the vertex $t(\omega(N))$ will be denoted by $\omega(N)^{(0)}$ and, for any $l\in\mathbb{Z}^+$, we denote by $\omega(N)^{(l)}$ the composition of $\omega(N)$ with itself $l$-times. For convenience, the subindex $i$ corresponding to the path $\omega_i$ will be assumed as a representative of a class in $\mathbb{Z}/(n+1)\mathbb{Z}$. On the one hand, if $n>0$ and $\omega_n^{l_n}\omega_0^0\notin I_N$, then $\omega(N)$ is unique up to rotations. On the other hand, $\omega(N)$ is repetition-free since the $\omega_i$'s are repetition-free and pairwise disjoint. Clearly, the unique factors in $\omega(N)$ of the form $\omega_a^{l_a}\omega_b^0$ are the paths $\omega_i^{l_i}\omega_{i+1}^0$. Likewise, observe that $\omega_i^{l_i}\omega_{i+1}^0\notin I_N$ for all $i<n$.

The path $\omega(N)$ will allow us to determine the set of maximal paths $\mathcal{M}_N$ in $A_N$. Consequently, it will allow us to provide a characterization for $A_N$ to be a UMP algebra.

\begin{theorem}\label{thm:omegaN}
Let $A=\Bbbk Q/I$ be a special multiserial algebra, $N\in\mathcal{D}_{Q,I}$ and $u$ a non-trivial path in $\mathcal{P}(Q_N)$. Then, $u\mid \omega(N)^{(l)}$ for some $l>0$ if and only if for each factor of $u$ of the form $\omega_{\alpha}^{l_{\alpha}}\omega_{\beta}^0$, there exists $i$ such that $\omega_{\alpha}^{l_{\alpha}}\omega_{\beta}^0=\omega_i^{l_i}\omega_{i+1}^{0}$.
\end{theorem}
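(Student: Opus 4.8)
The plan is to reduce both implications to a single structural description of how an arbitrary path of $\mathcal{P}(Q_N)$ sits inside the $\omega$-blocks. Writing $u=u^0\cdots u^{l_u}$, I would first record that each arrow $\gamma$ of $Q_N$ lies in exactly one path $\omega_\gamma$, that $\gamma$ is the last arrow of $\omega_\gamma$ precisely when $\gamma=\omega_\gamma^{l_\gamma}$, and the first arrow precisely when $\gamma=\omega_\gamma^0$. The heart of the argument is the claim: for $0\le j<l_u$, the factor $u^ju^{j+1}$ has the form $\omega_\alpha^{l_\alpha}\omega_\beta^0$ (that is, $u^j=\omega_{u^j}^{l_{u^j}}$ and $u^{j+1}=\omega_{u^{j+1}}^0$) if and only if $u^{j+1}$ is \emph{not} the immediate successor of $u^j$ inside a common $\omega$-path. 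I would prove the nontrivial direction using the non-ramification condition $b)$ of Definition \ref{dwau}: if $u^j$ were not the last arrow of $\omega_{u^j}$, then $t(u^j)$ would be an interior vertex of $\omega_{u^j}$, hence would have a unique outgoing arrow, forcing $u^{j+1}$ to be the successor of $u^j$ and so $\omega_{u^{j+1}}=\omega_{u^j}$; dually, if $u^{j+1}$ were not the first arrow of $\omega_{u^{j+1}}$, then $s(u^{j+1})$ would have a unique incoming arrow, forcing $u^j$ to be its predecessor. This yields a canonical factorization $u=q_0\,\omega_{a_1}\cdots\omega_{a_{r-1}}\,q_r$, where $q_0$ is a terminal segment of some $\omega_{a_0}$, $q_r$ an initial segment of some $\omega_{a_r}$, the middle factors are full $\omega$-blocks, and the factors of $u$ of the form $\omega_\alpha^{l_\alpha}\omega_\beta^0$ are exactly the junctions $\omega_{a_t}^{l_{a_t}}\omega_{a_{t+1}}^0$ between consecutive blocks.

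For the forward implication I would apply this structural claim to $\omega(N)^{(l)}$ itself. Since $\omega(N)=\omega_0\cdots\omega_n$ and its inter-block junctions are exactly the $\omega_i^{l_i}\omega_{i+1}^0$ (indices mod $n+1$, so that the wrap-around junction $\omega_n^{l_n}\omega_0^0$ appearing between copies is counted), and since inside a single repetition-free block $\omega_i$ no factor $\omega_i^{j}\omega_i^{j+1}$ with $j<l_i$ can have its first arrow equal to a last arrow $\omega_i^{l_i}$, the only factors of $\omega(N)^{(l)}$ of the form $\omega_\alpha^{l_\alpha}\omega_\beta^0$ are precisely the $\omega_i^{l_i}\omega_{i+1}^0$. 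As every factor of $u$ is a factor of $\omega(N)^{(l)}$ when $u\mid\omega(N)^{(l)}$, the conclusion is immediate.

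For the backward implication I would use the canonical factorization of $u$ together with the pairwise disjointness of the $\omega_j$'s recorded after Lemma \ref{lem:compchar}. The hypothesis gives, for each junction, an index $i$ with $\omega_{a_t}^{l_{a_t}}\omega_{a_{t+1}}^0=\omega_i^{l_i}\omega_{i+1}^0$; comparing the two single arrows and invoking disjointness forces $\omega_{a_t}=\omega_i$ and $\omega_{a_{t+1}}=\omega_{i+1}$, so every transition advances the block index by exactly one in $\mathbb{Z}/(n+1)\mathbb{Z}$. Hence the blocks met by $u$ are consecutive, $\omega_{a_0}=\omega_{i_0},\ \omega_{a_1}=\omega_{i_0+1},\dots,\ \omega_{a_r}=\omega_{i_0+r}$, so that $u$ is a subpath of $\omega_{i_0}\omega_{i_0+1}\cdots\omega_{i_0+r}$, which in turn is a subpath of $\omega(N)^{(l)}$ as soon as $l$ is large enough to contain $r+1$ consecutive blocks. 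If $u$ has no factor of the prescribed form, the factorization collapses to $u\mid\omega_{a_0}$, whence $u\mid\omega(N)$ and $l=1$ works.

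I expect the main obstacle to be the structural claim of the first paragraph, in particular making the forced successor/predecessor step fully airtight from condition $b)$ of Definition \ref{dwau}, and handling the degenerate situation where an $\omega$-block is itself a cycle (the genuinely cyclic quiver, where $n=0$ and the junction is the self-wrap $\omega_0^{l_0}\omega_1^0=\omega_0^{l_0}\omega_0^0$). I would also take care with the indexing convention modulo $n+1$ so that the wrap-around junction $\omega_n^{l_n}\omega_0^0$ is correctly counted among the $\omega_i^{l_i}\omega_{i+1}^0$, and with the linear-versus-cyclic dichotomy of Lemma \ref{lem:compchar} when deciding for which $l$ the power $\omega(N)^{(l)}$ is a genuine path.
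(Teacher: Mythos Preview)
Your proposal is correct and follows essentially the same approach as the paper. The canonical block factorization $u=q_0\omega_{a_1}\cdots\omega_{a_{r-1}}q_r$ you extract from Definition~\ref{dwau}\,$b)$ is precisely the content of Remark~\ref{rem:paths1}, and both the forward step (only the junctions $\omega_i^{l_i}\omega_{i+1}^0$ occur in $\omega(N)^{(l)}$) and the backward step (the hypothesis forces the block indices to advance by one in $\mathbb{Z}/(n+1)\mathbb{Z}$) coincide with the paper's argument; you are simply more explicit about the successor/predecessor claim, which the paper records as an observation after Lemma~\ref{lem:compchar}.
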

\begin{proof} Suppose that $u\mid \omega(N)^{(l)}$, for some $l>0$. If $\alpha$ and $\beta$ are two arrows such that $\omega_{\alpha}^{l_{\alpha}}\omega_{\beta}^0\mid u$, then $\omega_{\alpha}^{l_{\alpha}}\omega_{\beta}^0\mid \omega(N)^{(l)}$, and hence we obtain that $\omega_{\alpha}^{l_{\alpha}}\omega_{\beta}^0=\omega_i^{l_i}\omega_{i+1}^0$, for some $i$.
For the converse, suppose that for every subpath of $u$ of the form $\omega_{\alpha}^{l_{\alpha}}\omega_{\beta}^0$, there exists $i$ such that $\omega_{\alpha}^{l_{\alpha}}\omega_{\beta}^0=\omega_i^{l_i}\omega_{i+1}^0$. Let $\omega_{\alpha_1},\ldots, \omega_{\alpha_k}$ be the paths def\mbox ined in Remark \ref{rem:paths1} $ii)$ such that $u\mid \omega_{\alpha_1}\cdots \omega_{\alpha_k}$. If $k=1$, then $u\mid \omega_{\alpha_1}$, and since $\omega_{\alpha_1}\mid \omega(N)$, it follows that $u$ is a subpath of $\omega(N)$. If $k>1$, then $\omega_{\alpha_j}^{l_{\alpha_j}}\omega_{\alpha_{j+1}}^0\mid u$ for each $j<k$ and, by our assumption, there exists $i_j$ such that $\omega_{\alpha_j}^{l_{\alpha_j}}\omega_{\alpha_{j+1}}^0=\omega_{i_j}^{l_{i_j}}\omega_{i_j+1}^0$. Hence, $\omega_{\alpha_j}=\omega_{i_j}$ for every $j\in \{1,\ldots,k\}$, and $i_{j+1}=i_j+1$ for $j<k$. Then, $\omega_{\alpha_1}\cdots \omega_{\alpha_k}=\omega_{i_1}\omega_{i_1+1}\cdots \omega_{i_1+(k-1)}$ is a subpath of $\omega(N)^{(l)}$ for some $l>0$, and so, $u\mid \omega(N)^{(l)}$.
\end{proof}

From now on, we will assume another special property of the bound quiver algebra $A=\Bbbk Q/I$, which we state in the following def\mbox inition.
\begin{definition}\label{def:locmon}
Let $A=\Bbbk Q/I$ be a bound quiver algebra. $A$ is called \emph{locally monomial} if $A_N=\Bbbk Q_N/I_N$ is a monomial algebra for all $N\in \mathcal{D}_{Q, I}$.
\end{definition}
In general, if $A=\Bbbk Q/I$ is a monomial algebra and $R$ is a minimal set of zero relations generating $I$, then $A_N=\Bbbk Q_N/I_N$ is a monomial algebra for all $N\in\mathcal{D}_{Q,I}$. Moreover, setting $R_N:=R\cap\mathcal{P}(Q_N)$, we have that $R_N$ is a minimal set of relations which generates $I_N$. Indeed, clearly $\langle R_N\rangle\subseteq\langle R\rangle\cap\mathcal{P}(Q_N)=I_N$. Conversely, if $x=\sum\limits_{p\in Q_N}\lambda_p p\in I_N$, it follows that $p\in I$ for any $p\in\mathcal{P}(Q_N)$ such that $\lambda_p\neq0$, since $x\in I$ and $A$ is a monomial algebra. Consequently, $p\in\langle R_N\rangle$ which implies that $x\in \langle R_N\rangle$. In particular, the class of monomial algebras is contained in the class of locally monomial algebras.

Theorem \ref{thm:omegaN} allows us to introduce the important sets of paths, for any $N\in\mathcal{D}_{Q,I}$:
\begin{equation}\label{eq:defT}
\mathcal{T}_N:=\{u\in\mathcal{P}(Q_N):u\,\,\text{is non-trivial and }\,\,u\mid\omega(N)^{(l)}\,\,\text{for some}\,\,l\in\mathbb{Z}^+\}    
\end{equation}
\begin{equation}\label{eq:defT2}
\Omega_N:=\{\omega_a^{l_a}\omega_b^0\in I_N:\,a,b\in (Q_N)_1\,\,\,\text{and}\,\,\,t(\omega_a)=s(\omega_b)\}\quad\text{and}\quad \mathcal{S}_N:=R_N\setminus\Omega_N.
\end{equation}
In particular, by our hypothesis over $A$ to be locally monomial, $\mathcal{S}_N$ is formed by the paths in $R_N$ which are not in $\Omega_N$. Now, by Theorem \ref{thm:omegaN}, the set $\mathcal{T}_N$ in (\ref{eq:defT}) coincides with the following set 
\begin{equation}\label{eq:def2T}
\mathcal{T}_N=\{u\in\mathcal{P}(Q_N):\,(\forall\omega_{\alpha}^{l_{\alpha}}\omega_{\beta}^0\in I_N)(\omega_{\alpha}^{l_{\alpha}}\omega_{\beta}^0\mid u\Longrightarrow\omega_{\alpha}=\omega_n\,\,\text{and}\,\,\omega_{\beta}=\omega_0)\}    
\end{equation}

Hence, under this characterization of $\mathcal{T}_N$ it is easy to see that $\mathcal{S}_N$ in (\ref{eq:defT2}) and the set $\mathcal{M}_N$ of maximal paths in $A_N$ are subsets of $\mathcal{T}_N$. We def\mbox ine $\eta:=\eta_N\in\mathbb{Z}^+$ as the least integer such that all paths in $\mathcal{S}_N$ and $\mathcal{M}_N$ are subpaths of $\omega(N)^{(\eta)}$ as in (\ref{eq:defT}).

\begin{exam}\label{exlmsmtos}

\begin{enumerate}[$i)$]

\item In the Example \ref{exgdr}.$i)$, we have $\omega(L)=\omega_a\omega_e=dabce$ and $\omega(N)=\omega_f\omega_g=fgh$. Note that $\Omega_L=\{cd\}$, $\mathcal{S}_L=\{abc\}$, $\Omega_N=\emptyset$, and $\mathcal{S}_N=\emptyset$.

\item In the Example \ref{exgdr}.$ii)$, we have $\omega(N_1)=\omega_a=a$, $\omega(N_2)=\omega_b=b$ and $\omega(N_3)=\omega_c\omega_d=cde$. Note that $\Omega_{N_1}=\emptyset$, $\mathcal{S}_{N_1}=\{a^3\}$, $\Omega_{N_2}=\emptyset$, $\mathcal{S}_{N_2}=\{b^3\}$, $\Omega_{N_3}=\{ed\}$, and $\mathcal{S}_{N_1}=\emptyset$.

\item In the Example \ref{exgdr}.$iii)$, we can take $\omega(L)=\omega_a\omega_c=abcd$ and have that $\omega(N)=\omega_e=ef$. Note that $\Omega_L=\{ba,dc\}$, $\mathcal{S}_L=\{abcda,dabcd\}$, $\Omega_N=\emptyset$, and $\mathcal{S}_N=\{efe,fef\}$.

\end{enumerate}

\end{exam}

\begin{lemma}
If $\mathcal{S}_N=\emptyset$, then $\mathcal{M}_N=\{\omega(N)\}$. In particular, $A_N$ is a UMP algebra.
\end{lemma}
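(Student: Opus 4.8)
The plan is to prove that $\omega(N)=\omega_0\cdots\omega_n$ is the unique maximal path of $A_N$, from which the UMP property is immediate. Two facts drive the whole argument. First, since $A_N$ is monomial, a path lies in $I_N$ exactly when some element of the minimal relation set $R_N$ divides it; and by $\mathcal{S}_N=\emptyset$ together with (\ref{eq:defT2}), every element of $R_N$ is a junction relation $\omega_a^{l_a}\omega_b^0\in\Omega_N\subseteq I_N$. Second, the only two-arrow factors of $\omega(N)$ (and of any power $\omega(N)^{(l)}$) of the junction shape $\omega_a^{l_a}\omega_b^0$ are the consecutive junctions $\omega_i^{l_i}\omega_{i+1}^0$, which all lie outside $I_N$. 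Combining these, no relation of $R_N$ can divide $\omega(N)$, so $\omega(N)\notin I_N$.

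Before checking maximality I would invoke Proposition \ref{prop:monoalg} to tame the cyclic behaviour. By Lemma \ref{lem:compchar}, $N$ is either a directed path or a directed cycle. A directed cycle is impossible under $\mathcal{S}_N=\emptyset$: then all of its junctions $\omega_i^{l_i}\omega_{i+1}^0$ are edges and hence lie outside $I_N$, so no junction relation divides any $\omega(N)^{(l)}$, forcing $\omega(N)^{(l)}\notin I_N$ for all $l$ and contradicting $\mathcal{R}_{Q_N}^m\subseteq I_N$. By the same admissibility argument, if some $\omega_i$ is a closed path ($s(\omega_i)=t(\omega_i)$) then its self-junction $\omega_i^{l_i}\omega_i^0$ must lie in $I_N$, since otherwise unbounded powers of $\omega_i$ would avoid every junction relation and stay nonzero. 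Thus $N$ is a directed path, so $\omega_0$ has in-degree $0$ and $\omega_n$ has out-degree $0$ in $G_{Q,I}$. Now I would verify maximality: take $\alpha\in(Q_N)_1$ with $s(\alpha)=t(\omega_n)$. If $\omega_\alpha\neq\omega_n$ then $\alpha=\omega_\alpha^0$, and $\omega_n^{l_n}\alpha\notin I$ would create an edge leaving $\omega_n$, contradicting out-degree $0$; hence $\omega_n^{l_n}\alpha\in I_N$. If $\omega_\alpha=\omega_n$ then $\omega_n$ is closed and $\alpha=\omega_n^0$, so the new factor is the self-junction $\omega_n^{l_n}\omega_n^0\in I_N$. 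Either way $\omega(N)\alpha\in I_N$, and the left condition $\alpha\,\omega(N)\in I_N$ follows symmetrically from in-degree $0$ of $\omega_0$. Together with $\omega(N)\notin I_N$ this gives $\omega(N)\in\mathcal{M}_N$.

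For uniqueness, let $\mathfrak{m}\in\mathcal{M}_N$. By the inclusion $\mathcal{M}_N\subseteq\mathcal{T}_N$ recorded just before the lemma, $\mathfrak{m}\mid\omega(N)^{(l)}$ for some $l$. The period boundary $\omega_n^{l_n}\omega_0^0$ is either not a path (when $t(\omega_n)\neq s(\omega_0)$) or lies in $I_N$ (since $N$ is not a cycle), so $\mathfrak{m}\notin I_N$ cannot cross it and therefore $\mathfrak{m}\mid\omega(N)$. Writing $\omega(N)=x\,\mathfrak{m}\,y$, any nontrivial $y$ would give a divisor $\mathfrak{m}y^0$ of $\omega(N)\notin I_N$, whence $\mathfrak{m}y^0\notin I_N$, contradicting the maximality of $\mathfrak{m}$; likewise $x$ is trivial, so $\mathfrak{m}=\omega(N)$. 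This proves $\mathcal{M}_N=\{\omega(N)\}$, and the UMP conclusion follows at once, since every arrow of $(Q_N)_1$ occurs in some $\omega_i$ and hence in the unique maximal path $\omega(N)$.

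I expect the main difficulty to be precisely the cyclic bookkeeping in the second paragraph: one must use the admissibility of $I_N$ both to exclude a genuine cyclic component and to force the self-junction of any closed $\omega_i$ into $I_N$. Without this, $\omega(N)$ need not be maximal, as the single-loop example with $I_N=\langle a^3\rangle$ (where $\mathcal{S}_N\neq\emptyset$ and $\mathcal{M}_N=\{a^2\}$) already illustrates. The remaining steps are the routine ``no relation divides it, and it admits no extension'' verifications.
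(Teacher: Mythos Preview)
Your proof is correct and follows essentially the same strategy as the paper's: both hinge on admissibility of $I_N$ to force the ``wrap--around'' junction $\omega_n^{l_n}\omega_0^0$ into $I_N$ (or to make it undefined), which simultaneously gives maximality of $\omega(N)$ and forces every maximal path to sit inside a single copy of $\omega(N)$.

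The only organizational difference is that the paper argues directly on powers of $\omega(N)$: it observes that if $t(\omega(N))=s(\omega(N))$ then admissibility gives $\omega(N)^{(l)}\in I_N$ for $l\gg 0$, so some element of $R_N=\Omega_N$ divides it, and Theorem~\ref{thm:omegaN} identifies that element as $\omega_n^{l_n}\omega_0^0$; it then declares that ``by Lemma~\ref{lem:compchar} it suffices'' for maximality. You instead unpack this last step, first excluding the cyclic component shape of $N$, then handling closed $\omega_i$'s via self--junctions, and finally doing the explicit case split on $\omega_\alpha=\omega_n$ versus $\omega_\alpha\neq\omega_n$. This makes transparent why the reduction to $\omega_n^{l_n}\omega_0^0\in I_N$ really is sufficient (a point the paper leaves to the reader), at the cost of a somewhat longer argument. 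Note incidentally that for $n>0$ your self--junction claims already follow from special multiseriality alone (e.g.\ $\omega_{n-1}^{l_{n-1}}\omega_n^0\notin I$ forces $\omega_n^{l_n}\omega_n^0\in I$), so admissibility is only genuinely needed in the single--vertex case $n=0$, exactly as your closing remark anticipates.
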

\begin{proof}
If $\mathcal{S}_N=\emptyset$, then $R_N=\Omega_N$. Observe that  $\omega(N)\notin I_N$, since otherwise there is a path in $ R_N=\Omega_N$ dividing $\omega(N)$ and hence such path must be of the form $\omega_i^{l_i}\omega_{i+1}^0$, for some $i<n$, which is a contradiction because $\omega_i^{l_i}\omega_{i+1}^0\notin I_N$. For the maximality of $\omega(N)$, by Lemma \ref{lem:compchar}, it suf\mbox f\mbox ices to prove that $\omega_n^{l_n}\omega_0^0\in I_N$. We have two cases. If $I_N=\left<0\right>$, then $t_{\omega(N)}\neq s_{\omega(N)}$ due to the admissibility of $I_N$, which implies that $\omega_n^{l_n}\omega_0^0\in I_N$. On the contrary, if $I_N\neq\left<0\right>$, then by the admissibility of $I_N$, there exists $l>>0$ such that $\omega(N)^{(l)}\in I_N$. Thus, $\omega_{\alpha}^{l_{\alpha}}\omega_{\beta}^0\mid\omega(N)^{(l)}$, for some $\omega_{\alpha}^{l_{\alpha}}\omega_{\beta}^0\in I_N$. Thus, by Theorem \ref{thm:omegaN} we obtain that $\omega_{\alpha}=\omega_n$ and $\omega_{\beta}=\omega_0$ and then $\omega_{n}^{l_n}\omega_0^0\in I_N$. Finally, if $\mathfrak{m}+I_N\in \mathcal{M}_N$, then $\mathfrak{m}\mid\omega(N)$ by Theorem \ref{thm:omegaN} and the lemma follows. 
\end{proof}

\begin{definition}\label{rem:total}
Let $N\in\mathcal{D}_{Q,I}$ be a weakly connected component. We def\mbox ine on the sets $(Q_N)_1$ and $\mathcal{S}_N$ (provided that $\mathcal{S}_N\neq\emptyset$) the following orders:
\begin{itemize}
\item {\bf On $(Q_N)_1$:} for any pair of arrows $\alpha,\beta\in (Q_N)_1$ we def\mbox ine $\alpha\leq_f \beta$ if and only if $\omega(N)=u\alpha u'$, for some $u,u'\in\mathcal{P}(Q_N)$, with $\beta\mid \alpha u'$. 
\item {\bf On $\mathcal{S}_N$:} For any pair of zero relations $u,v\in\mathcal{S}_N$ we def\mbox ine $u\leq_s v$ if and only if $u^0\leq_f v^0$.
\end{itemize}
\end{definition}

\begin{lemma}\label{lem:orders}
Under the above notations, the sets $((Q_N)_1,\leq_f )$ and $(\mathcal{S}_N,\leq_s)$ are totally ordered.
\end{lemma}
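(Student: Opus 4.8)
The plan is to show that both orders are, in essence, read off from the positions of arrows inside the repetition-free path $\omega(N)$. First I would record the structural fact that makes $\leq_f$ transparent: since $\omega(N)=\omega_0\cdots\omega_n$ is repetition-free, every arrow $\alpha\in(Q_N)_1$ occurs \emph{exactly once} in $\omega(N)$. It occurs at least once because $\alpha\mid\omega_\alpha$ and, for $\alpha\in(Q_N)_1$, the path $\omega_\alpha$ is one of the vertices $\omega_0,\dots,\omega_n$; and it occurs at most once by repetition-freeness. Consequently the factorization $\omega(N)=u\alpha u'$ is unique for each $\alpha$, so each arrow of $(Q_N)_1$ has a well-defined position in $\omega(N)$.

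Reading the definition of $\leq_f$ through this lens, $\alpha\leq_f\beta$ holds precisely when $\beta$ occurs in the terminal segment $\alpha u'$, i.e. exactly when the position of $\alpha$ does not exceed that of $\beta$. Thus $\leq_f$ is nothing but the linear order induced by the distinct positions of the arrows, viewed inside $\{1,\dots,l_{\omega(N)}+1\}$, and reflexivity, antisymmetry, transitivity and totality all transfer immediately from the corresponding properties of $\leq$ on the integers (antisymmetry uses that distinct arrows occupy distinct positions). This settles that $((Q_N)_1,\leq_f)$ is totally ordered.

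For $(\mathcal{S}_N,\leq_s)$, the assignment $u\mapsto u^0$ transports $\leq_s$ to $\leq_f$, so reflexivity, transitivity and totality follow at once from the first part. The only property requiring genuine work is antisymmetry, which amounts to showing that $u\mapsto u^0$ is injective on $\mathcal{S}_N$, i.e. two relations in $\mathcal{S}_N$ with the same first arrow must coincide. Suppose $u,v\in\mathcal{S}_N$ with $u^0=v^0=\alpha$. Since $\mathcal{S}_N\subseteq\mathcal{T}_N$, both $u$ and $v$ are subpaths of $\omega(N)^{(\eta)}$. Here repetition-freeness does the decisive work: in the periodic word $\omega(N)^{(\eta)}$ the arrow $\alpha$ occurs only at positions congruent, modulo $l_{\omega(N)}+1$, to its unique position in $\omega(N)$, so any subpath of $\omega(N)^{(\eta)}$ beginning with $\alpha$ is completely determined by its length. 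Hence, assuming without loss of generality that $u$ is no longer than $v$, the path $u$ is an initial segment of $v$, so $u\mid v$. As $R_N$ is a minimal set of zero relations generating the monomial ideal $I_N$, no element of $R_N$ can properly divide another; therefore $u=v$. This proves injectivity of $u\mapsto u^0$, hence antisymmetry, and $\leq_s$ is total.

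I expect the crux to be precisely the antisymmetry of $\leq_s$, that is, the injectivity of $u\mapsto u^0$ on $\mathcal{S}_N$: everything concerning $\leq_f$ is bookkeeping once repetition-freeness of $\omega(N)$ is invoked, whereas the $\mathcal{S}_N$ part genuinely needs the periodic structure of $\omega(N)^{(\eta)}$ together with the minimality of $R_N$ to rule out two distinct relations sharing a first arrow.
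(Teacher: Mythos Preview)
Your proof is correct. The treatment of $((Q_N)_1,\leq_f)$ is essentially the paper's argument, phrased through the position map: both rest on $\omega(N)$ being repetition-free so that each arrow occurs exactly once.

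For the antisymmetry of $\leq_s$, however, you take a genuinely different route. The paper argues directly from the special multiserial hypothesis: given $u,v\in\mathcal{S}_N$ with $u^0=v^0$, it lets $x$ be the longest common initial segment, writes $u=xy$, $v=xz$, and uses that at most one of $x^{l_x}y^0$, $x^{l_x}z^0$ can lie outside $I_N$ to force (say) $u$ to have length two and land in $\Omega_N$, contradicting $u\in\mathcal{S}_N$. You instead invoke the already-established inclusion $\mathcal{S}_N\subseteq\mathcal{T}_N$ and exploit the periodic structure of $\omega(N)^{(\eta)}$: since $\omega(N)$ is repetition-free, any two subpaths of $\omega(N)^{(\eta)}$ with the same first arrow are comparable by divisibility, and minimality of $R_N$ finishes. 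Your argument is cleaner in that it reuses machinery set up just before the lemma rather than re-entering the special multiserial condition by hand; the paper's argument, on the other hand, is self-contained and does not need the observation $\mathcal{S}_N\subseteq\mathcal{T}_N$ (nor the auxiliary integer $\eta$). Both approaches ultimately reduce to ``one relation divides the other, hence they are equal by minimality of $R_N$''.
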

\begin{proof}
For the f\mbox irst case $((Q_N)_1,\leq_f)$, the ref\mbox lexivity and transitivity are immediate. The antisymmetry is a consequence of the unique factorization on $\mathcal{P}(Q_N)$ and the fact that $\omega(N)$ is a repetition-free path.

Now for the second set $(\mathcal{S}_N,\leq_s)$, the ref\mbox lexivity and transitivity are immediate as well. For the antisymmetry, we take $u,v\in \mathcal{S}_N$ such that $u\leq_s v$ and $v\leq_s u$. Then, $u^0\leq_f v^0$ and $v^0\leq_f u^0$, which implies that $u^0=v^0$. Let $x$ be the greatest path such that $u=xy$ and $v=xz$, for some $y,z\in\mathcal{P}(Q_N)$. We will show that either $y$ or $z$ is trivial. Assume, by contradiction, that $y$ and $z$ are non-trivial. Since $A$ is a special multiserial algebra and $y^0\neq z^0$, we have either $x^{l_x}y^0\in I_N$ or $x^{l_x}z^0\in I_N$. Without loss of generality we consider  $x^{l_x}y^0\in I_N$. Thus, by minimality of $R_N$, we obtain that $l_x=l_{y}=0$. Besides, since $y^0\neq z^0$ it follows that $u=\omega_x^{l_x}\omega_{y}^0$, which contradicts that $u\in \mathcal{S}_N$. In consequence, either $y$ or $z$ is trivial and, by minimality of $R_N$, we conclude that $u=v$.

Finally, the orders ``$\leq_f $'' and ``$\leq_s$'' are total orders because for every arrow $\alpha\in (Q_N)_1$ we have that $\alpha\mid\omega(N)$.
\end{proof}
Following Lemma \ref{lem:orders}, there exist $k\in\mathbb{Z}^+$ and relations $s_1,\ldots,s_k$ such that $\mathcal{S}_N=\{s_1,\ldots,s_k\}$, with $s_1<_s\cdots<_s s_k$. From now on, we will denote by $\sigma_i:=l_{s_i}$, for $i=1,\ldots, k$. We distinguish a new set of paths of $Q_N$ as follows.
\begin{definition}\label{def:maxS}
For $0\leq i\leq k$, we def\mbox ine the paths $\mathfrak{m}_i\in\mathcal{P}(Q_N)$ as follows:
\begin{enumerate}
\item[i)] If $0<i<k$, then $\mathfrak{m}_i=vs_{i+1}^0\cdots s_{i+1}^{\sigma_{i+1}-1}$, where $v\in\mathcal{P}(Q_N)$ is such that $\omega(N)=us_{i}^0vs_{i+1}^0v'$, for some $u,v'\in\mathcal{P}(Q_N)$. Notice that $s_i\in E_{s_i^0vs_{i+1}^0v'\omega(N)^{(\eta-1)}}$ and $s_{i+1}\in E_{s_{i+1}^0v'\omega(N)^{(\eta-1)}}$.
\item[ii)] If $\omega_n^{l_n}\omega_0^0\in I_N$, then we def\mbox ine $\mathfrak{m}_0=us_1^0\cdots s_1^{\sigma_1-1}$ and $\mathfrak{m}_k=s_k^1\cdots s_k^{\sigma_k}v$, where $u,v\in\mathcal{P}(Q_N)$ are such that $us_1\in E_{\omega(N)}$ and $s_kv\in T_{\omega(N)}$.
\item[iii)] If $\omega_n^{l_n}\omega_0^0\notin I_N$, then we def\mbox ine $\mathfrak{m}_0=\mathfrak{m}_k=vus_1^0\cdots s_1^{\sigma_1-1}$, where $u$ and $v$ are subpaths of $\omega(N)$ such that $us_1^0\in E_{\omega(N)}$ and $s_k^0v\in T_{\omega(N)}$.
\end{enumerate}
We set $\mathcal{S}^{\ast}_N:=\{\mathfrak{m}_i+I_N:\,0\leq i\leq k\}$.
\end{definition}

\begin{lemma}
Let $A$ be a special multiserial and locally monomial algebra and let $N\in\mathcal{D}_{Q,I}$. For all $i\in\{0,\ldots,k\}$ we have that $\mathfrak{m}_i\notin I_N$.
\end{lemma}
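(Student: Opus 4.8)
The plan is to show that each $\mathfrak{m}_i$, as constructed in Definition \ref{def:maxS}, avoids all the generating relations, and hence lies outside $I_N$. Since $A$ is locally monomial, $I_N$ is generated by the minimal monomial set $R_N=\Omega_N\sqcup\mathcal{S}_N$, so it suffices to prove that no relation in $\Omega_N$ and no relation in $\mathcal{S}_N$ divides $\mathfrak{m}_i$. The key structural fact I would exploit is that every $\mathfrak{m}_i$ is, by construction, a subpath of a power $\omega(N)^{(\eta)}$: in case i) this is explicit from $\mathfrak{m}_i=vs_{i+1}^0\cdots s_{i+1}^{\sigma_{i+1}-1}$ sitting inside the displayed factorization of $\omega(N)$, and in cases ii) and iii) the same follows from the membership statements $us_1\in E_{\omega(N)}$, $s_kv\in T_{\omega(N)}$, etc. Thus each $\mathfrak{m}_i\in\mathcal{T}_N$.

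First I would handle the relations in $\Omega_N$. By the characterization \eqref{eq:def2T} of $\mathcal{T}_N$, if some $\omega_\alpha^{l_\alpha}\omega_\beta^0\in I_N$ divides $\mathfrak{m}_i$, then necessarily $\omega_\alpha=\omega_n$ and $\omega_\beta=\omega_0$; that is, the only $\Omega_N$-type factor that could possibly occur in $\mathfrak{m}_i$ is $\omega_n^{l_n}\omega_0^0$. In cases i) and iii), where $\omega_n^{l_n}\omega_0^0\notin I_N$ (or where $\mathfrak{m}_i$ stays strictly inside a single copy of $\omega(N)$ and so cannot straddle the junction), no element of $\Omega_N$ divides $\mathfrak{m}_i$. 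In case ii), where $\omega_n^{l_n}\omega_0^0\in I_N$, I would use the boundary conditions $us_1\in E_{\omega(N)}$ and $s_kv\in T_{\omega(N)}$ together with the definitions of $\mathfrak{m}_0$ and $\mathfrak{m}_k$ to check that neither $\mathfrak{m}_0$ nor $\mathfrak{m}_k$ actually reaches across the junction where $\omega_n$ meets $\omega_0$, so again $\omega_n^{l_n}\omega_0^0$ does not divide it.

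Next I would rule out divisibility by any $s_j\in\mathcal{S}_N$. The essential point is that the construction in Definition \ref{def:maxS} places $\mathfrak{m}_i$ strictly between consecutive relations in the total order $(\mathcal{S}_N,\leq_s)$ of Lemma \ref{lem:orders}: $\mathfrak{m}_i$ begins after the start of $s_i$ and stops just one arrow short of completing $s_{i+1}$ (it ends at $s_{i+1}^{\sigma_{i+1}-1}$, omitting the final arrow $s_{i+1}^{\sigma_{i+1}}$). I would argue that because $\omega(N)$ is repetition-free and the arrows are totally ordered by $\leq_f$, any occurrence of a full relation $s_j$ as a subpath of $\mathfrak{m}_i$ would force the initial arrow $s_j^0$ to lie in the span covered by $\mathfrak{m}_i$, and the totality of the order then pins down $j$ to the range $i< j\le i+1$; but $s_{i+1}$ is deliberately truncated by one arrow, so $s_{i+1}$ cannot divide $\mathfrak{m}_i$, and $s_j$ for $j>i+1$ starts too late. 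Hence no $s_j$ divides $\mathfrak{m}_i$.

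The main obstacle I anticipate is the careful bookkeeping at the endpoints and at the cyclic junction, i.e.\ verifying in cases ii) and iii) that the wrap-around factor $\omega_n^{l_n}\omega_0^0$ and the ``wrap-around'' occurrence of $s_1$ or $s_k$ are correctly excluded; this is where repetition-freeness of $\omega(N)$ and the precise meaning of $E_{\omega(N)}$, $T_{\omega(N)}$, and the truncation indices must be combined without slippage. Once the position of $\mathfrak{m}_i$ inside $\omega(N)^{(\eta)}$ is described exactly, the divisibility checks reduce to comparisons in the total order $\leq_f$, and the conclusion $\mathfrak{m}_i\notin I_N$ follows since no generator of $I_N$ divides $\mathfrak{m}_i$.
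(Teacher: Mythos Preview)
Your proposal is correct and follows essentially the same route as the paper: both arguments show that no generator in $R_N=\Omega_N\cup\mathcal{S}_N$ divides $\mathfrak{m}_i$, treating the $\Omega_N$-case via the characterization \eqref{eq:def2T} of $\mathcal{T}_N$ and the $\mathcal{S}_N$-case via the total order $\leq_s$ to pin down the only candidate as $s_{i+1}$, which is deliberately truncated. The one place to tighten is your parenthetical claim for case~i) that ``$\mathfrak{m}_i$ stays strictly inside a single copy of $\omega(N)$'' when $\omega_n^{l_n}\omega_0^0\in I_N$: this is not automatic from Definition~\ref{def:maxS}, and the paper justifies it by observing that minimality of $s_{i+1}\in R_N$ forces $\omega_n^{l_n}\omega_0^0\nmid s_{i+1}$, hence $s_{i+1}\mid\omega(N)$ and therefore $\mathfrak{m}_i\mid\omega(N)$---exactly the ``junction bookkeeping'' you anticipated.
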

\begin{proof}
Suppose that $0<i<k$ and, by contradiction, that $\mathfrak{m}_i\in I_N$. Then, there exists $r\in R_N$ such that $r\mid \mathfrak{m}_i$. Since, by Def\mbox inition \ref{def:maxS} $i)$, $\mathfrak{m}_i\mid vs_{i+1}$ and $vs_{i+1}\mid \omega(N)^{(\eta)}$, it follows from Theorem \ref{thm:omegaN} that $r\in \mathcal{S}_N$ or $r=\omega_n^{l_n}\omega_0^{0}$. In the former case, $r=s_h$, for some $h$. Then, $s_h\mid vs_{i+1}$, {\it a fortiori}, $s_h^0\mid vs_{i+1}$. By Def\mbox inition \ref{def:maxS} $i)$, we have $s_h^0\nmid v$ and, consequently, $s_h\mid s_{i+1}$. By the minimality of $R_N$, we obtain $h=i+1$. Hence, $s_{i+1}\mid \mathfrak{m}_i=vs_{i+1}^0\cdots s_{i+1}^{\sigma_{i+1}-1}$ and, by a similar argument as above, we conclude $s_{i+1}\mid s_{i+1}^0\cdots s_{i+1}^{\sigma_{i+1}-1}$, which is impossible. In the latter case, since $\omega_n^{l_n}\omega_0^0\nmid s_{i+1}$, we obtain that $s_{i+1}\mid\omega(N)$ and, consequently, $\mathfrak{m}_i\mid vs_{i+1}^0\cdots s_{i+1}^{\sigma_{i+1}}$ and $vs_{i+1}^0\cdots s_{i+1}^{\sigma_{i+1}}\mid\omega(N)$, which is a contradiction with the fact that $\omega_n^{l_n}\omega_0^0$ is not a factor of $\omega(N)$. In conclusion, $\mathfrak{m}_i\notin I_N$ for all $i\in\{1,\ldots, k-1\}$.

Finally, to prove that $\mathfrak{m}_0\notin I_N$ and $\mathfrak{m}_k\notin I_N$, following Def\mbox inition \ref{def:maxS} $ii), iii)$, we need to analyze two cases: $\omega_n^{l_n}\omega_0^0\in I_N$ or $\omega_n^{l_n}\omega_0^0\notin I_N$. In the former case, we obtain $\mathfrak{m}_0\mid \omega(N)$ and $\mathfrak{m}_k\mid \omega(N)$. Now, if $\mathfrak{m}_0\in I_N$, then, as above, there exists $s_h\in \mathcal{S}_N$ such that $s_h\mid \mathfrak{m}_0$ and, hence $s_h<_s s_1$, which is a contradiction. Similarly, $\mathfrak{m}_k\notin I_N$, provided that $\omega_n^{l_n}\omega_0^0\in I_N$. In the latter case, by the contrary, if $\mathfrak{m}_0=\mathfrak{m}_k\in I_N$, then, as above, there exists $h$ such that either $s_h\mid  u$ or $s_h\mid  v$. It follows that, either $s_h<_s s_1$ or $s_k<_s  s_h$, which is a contradiction. In conclusion, $\mathfrak{m}_0=\mathfrak{m}_k\notin I_N$.
\end{proof}

\begin{theorem}\label{thm:maxN}
Let $A$ be a special multiserial and locally monomial algebra.  Then $\mathcal{M}_N=\mathcal{S}_N^{\ast}$, for each $N\in\mathcal{D}_{Q,I}$.
\end{theorem}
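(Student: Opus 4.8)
The plan is to prove the two inclusions $\mathcal{S}_N^{\ast}\subseteq\mathcal{M}_N$ and $\mathcal{M}_N\subseteq\mathcal{S}_N^{\ast}$ separately, after first disposing of the case $\mathcal{S}_N=\emptyset$ by the Lemma that yields $\mathcal{M}_N=\{\omega(N)\}$ in that situation. Thus I will assume $\mathcal{S}_N=\{s_1,\dots,s_k\}$ with $k\geq1$ and $s_1<_s\cdots<_s s_k$, as furnished by Lemma \ref{lem:orders}. The bookkeeping device throughout will be to record each relation and each candidate maximal path by the block of \emph{positions} of its arrows along $\omega(N)^{(l)}$ for $l$ large (taking $l\geq\eta$ and enlarging when needed): since $\omega(N)$ is repetition-free, consecutive positions are unambiguous, and a path is nonzero exactly when the block it occupies contains no full relation. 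The structural fact I will isolate first is a \emph{staircase property}: because $R_N$ is minimal, no relation in $\mathcal{S}_N$ divides another, so writing $s_i$ for the block $[a_i,b_i]$, the chain $a_1<\cdots<a_k$ (which is what $<_s$ records) forces $b_1<\cdots<b_k$ as well, a later relation being unable to end at or before an earlier one without being nested in it. Both the left and right endpoints of the $s_i$ therefore increase strictly along $\omega(N)$.

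For $\mathcal{S}_N^{\ast}\subseteq\mathcal{M}_N$, I fix $\mathfrak{m}_i$ and recall from the previous Lemma that $\mathfrak{m}_i\notin I_N$, so only the extensions need checking. Since $A$ is special multiserial, $\mathfrak{m}_i$ has at most one nonzero left extension and at most one nonzero right extension, and along $\omega(N)$ these are precisely the arrows adjacent to the block of $\mathfrak{m}_i$. By Definition \ref{def:maxS} the right-adjacent arrow is $s_{i+1}^{\sigma_{i+1}}$, and appending it completes the relation $s_{i+1}$ — here the staircase guarantees that $s_{i+1}$ fits inside the extended block — while the left-adjacent arrow is $s_i^{0}$, whose prepending completes $s_i$ (again using $b_i\leq b_{i+1}-1$). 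Hence both distinguished extensions lie in $I_N$, and every other extension lies in $I_N$ by special multiseriality; so $\mathfrak{m}_i$ is maximal. The boundary paths $\mathfrak{m}_0,\mathfrak{m}_k$ are handled identically, the only difference being that in the line case (Definition \ref{def:maxS}\,$ii)$) one side abuts the source or sink of $\omega(N)$ and admits no nonzero extension at all, whereas in the cyclic case (Definition \ref{def:maxS}\,$iii)$) the two boundary paths coincide and close the cycle between $s_k$ and $s_1$.

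For the reverse inclusion, let $\mathfrak{m}+I_N\in\mathcal{M}_N$. Since $\mathfrak{m}$ is nonzero no zero relation divides it, so the defining implication in the description (\ref{eq:def2T}) of $\mathcal{T}_N$ holds vacuously and $\mathfrak{m}\in\mathcal{T}_N$; that is, $\mathfrak{m}$ occupies a block $[p,q]$ of some $\omega(N)^{(l)}$. Maximality now pins both ends. If $\mathfrak{m}$ does not terminate at a sink, let $\delta$ be the $\omega(N)$-successor arrow at position $q+1$; then $\mathfrak{m}\delta\in I_N$ and $\mathfrak{m}\delta\mid\omega(N)^{(l)}$, so some $r\in R_N$ divides $\mathfrak{m}\delta$. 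As $r$ is a subpath of a power of $\omega(N)$, Theorem \ref{thm:omegaN} forbids $r\in\Omega_N$, whence $r\in\mathcal{S}_N$; moreover $r$ must use the arrow $\delta$ (otherwise $r\mid\mathfrak{m}$, contradicting $\mathfrak{m}\notin I_N$), so $r=s_j$ ends exactly at $q+1$. Symmetrically, the left end is pinned by a relation $s_{j'}$ starting exactly at $p-1$ (unless $\mathfrak{m}$ begins at a source). The staircase property then forces $j$ and $j'$ to be neighbours: any $s_h$ with $a_{j'}<a_h<a_j$ would satisfy $a_h\geq p$ and $b_h\leq q$, so $s_h\subseteq[p,q]=\mathfrak{m}$, contradicting $\mathfrak{m}\notin I_N$. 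Hence $j=j'+1$, the block of $\mathfrak{m}$ is exactly $[a_{j'}+1,\,b_{j'+1}-1]$, and comparison with Definition \ref{def:maxS} (with the cyclic/line conventions for the extreme blocks) gives $\mathfrak{m}+I_N=\mathfrak{m}_{j'}+I_N\in\mathcal{S}_N^{\ast}$.

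The hard part is the positional bookkeeping carried out uniformly over the two shapes of $N$ provided by Lemma \ref{lem:compchar}. One must correctly identify the relation completed by a one-arrow extension — it lies in $\mathcal{S}_N$ rather than $\Omega_N$ precisely because extensions remain inside a power of $\omega(N)$ — and one must use the staircase property, itself a consequence of the minimality of $R_N$, in two ways at once: to rule out an unwanted relation sitting inside a candidate maximal path, and to guarantee that the two pinning relations are consecutive in the order $<_s$. The cyclic case demands extra care, since positions are read modulo the period and the single path $\mathfrak{m}_0=\mathfrak{m}_k$ must be recognized as the block wrapping from $s_k$ around to $s_1$.
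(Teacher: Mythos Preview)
Your proof is correct and parallels the paper's argument closely. The one notable organizational difference is in the reverse inclusion: the paper locates only $\mathfrak{m}^0$ relative to the $s_i^0$ using the total order $\leq_f$ (three cases: before $s_1^0$, after $s_k^0$, or between $s_i^0$ and $s_{i+1}^0$), then shows directly that $\mathfrak{m}\mid\mathfrak{m}_i$ via $s_{i+1}\nmid\mathfrak{m}$ and concludes by maximality of $\mathfrak{m}_i$; you instead pin both ends of $\mathfrak{m}$ and invoke the staircase property to show the two pinning relations are adjacent in $<_s$. Your explicit staircase observation is exactly what the paper encodes more tersely in the first inclusion as ``because $s_{i+1}\nmid s_i$''; isolating it as a named fact is a tidy way to use it uniformly in both directions. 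One wording caveat: the sentence ``Theorem \ref{thm:omegaN} forbids $r\in\Omega_N$'' is not literally correct, since in the line case with $t(\omega(N))=s(\omega(N))$ the relation $\omega_n^{l_n}\omega_0^0\in\Omega_N$ does divide $\omega(N)^{(2)}$; but you correctly quarantine this under ``terminates at a sink'', so the argument stands once that phrase is read as ``ends at $\omega_n^{l_n}$ when $\omega_n^{l_n}\omega_0^0\in I_N$''.
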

\begin{proof}
First, we will show that $\mathcal{S}_N^{\ast}\subseteq \mathcal{M}_N$. Let $\mathfrak{m}_i+I_N\in S_N^{\ast}$, with $0<i<k$, 
 and let $\alpha\in(Q_N)_1$ such that $t(\alpha)=s(\mathfrak{m}_i)$. If $\alpha=s_i^0$, then $s_i\mid s_i^0vs_{i+1}^0\cdots s_{i+1}^{\sigma_{i+1}-1}=\alpha\mathfrak{m}_i$, because $s_{i+1}\nmid s_i$. In consequence, $\alpha\mathfrak{m}_i\in I_N$. Now, if $\alpha\neq s_i^0$, then, by def\mbox inition of $\mathcal{S}_N$, we have $\sigma_i>1$ and hence $s_i^0s_i^1\notin I_N$ and since $A$ is a special multiserial algebra, $\alpha\mathfrak{m}_i\in I_N$. A similar reasoning applies if $s(\alpha)=t(\mathfrak{m}_i)$ with $\alpha=s_{i+1}^{\sigma_{i+1}}$ or $\alpha\neq s_{i+1}^{\sigma_{i+1}}$, to obtain $\mathfrak{m}_i\alpha\in I_N$. Furthermore, in a similar way it can be proved that $\mathfrak{m}_0\alpha\in I_N$ and $\alpha\mathfrak{m}_k\in I_N$, for the cases $\omega_n^{l_n}\omega_0^0\in I_N$ or $\omega_n^{l_n}\omega_0^0\notin I_N$ (see Def\mbox inition \ref{def:maxS} $ii)$ and $iii)$). The remaining cases to prove are $\alpha\mathfrak{m}_0\in I_N$ and $\mathfrak{m}_k\alpha\in I_N$, when $\omega_n^{l_n}\omega_0^0\in I_N$. We will show that $\alpha\mathfrak{m}_0\in I_N$. Suppose, by contradiction, that $\alpha\mathfrak{m}_0\notin I_N$. Then, $\alpha\mathfrak{m}_0\mid\omega(N)^{(l)}$, for some $l>0$. By the decomposition of $\omega(N)$ in arrows and the fact that $\mathfrak{m}_0^0=\omega_0^0$, we obtain $\alpha\mathfrak{m}_0^0=\omega_n^{l_n}\omega_0^0$. Nevertheless, the last equality is impossible, since $\alpha\mathfrak{m}_0$ does not contain factors in $I_N$. The case $\mathfrak{m}_k\alpha\in I_N$ is analogous. This completes the proof of the inclusion $\mathcal{S}_N^{\ast}\subseteq \mathcal{M}_N$.

Conversely, for the proof of the inclusion $\mathcal{M}_N\subseteq\mathcal{S}_N^{\ast}$, let $\mathfrak{m}+ I_N\in\mathcal{M}_N$. Since ``$\leq_f$'' is a total order on $(Q_N)_1$, there are three cases:\\
{\bf Case 1:} There is no $i$ such that $s_i^0<_f\mathfrak{m}^0$.\\
\hspace*{\parindent}
In this case, we have that $\mathfrak{m}^0\leq_f s_1^0$. Let $x$, $x'$, $y$, $y'$ be paths in $Q_N$ such that $\omega(N)=x\mathfrak{m}^0x'$ and $\mathfrak{m}^0x'=ys_1^0y'$. We obtain that $\mathfrak{m}\in E_{ys_1^0y'\omega(N)^{(\eta-1)}}$ and $s_1\in E_{s_1^0y'\omega(N)^{(\eta-1)}}$. Since $s_1\nmid\mathfrak{m}$, it follows that $\mathfrak{m}\in E_{y\theta}$, with $\theta=s_1^0\cdots s_1^{\sigma_1-1}$, and hence $\mathfrak{m}\mid xys_1^0\cdots s_1^{\sigma_1-1}$. Notice that $xy$ coincides with the path $u$ in Def\mbox inition \ref{def:maxS} $ii)$ and $iii)$ and since $us_1^0\cdots s_1^{\sigma_1-1}\mid\mathfrak{m}_0$, we get $\mathfrak{m}\mid\mathfrak{m}_0$. In conclusion, $\mathfrak{m}=\mathfrak{m}_0$.\\
{\bf Case 2:} There is no $i$ such that $\mathfrak{m}^0\leq_f s_{i}^0$.\\
\hspace*{\parindent}
In this case, we have that $s_k^0<_f\mathfrak{m}^0$. Let $x$, $y$, $z$ be paths in $Q_N$ such that $\omega(N)=xs_k^0y\mathfrak{m}^0z$. If $\omega_n^{l_n}\omega_0^0\in I_N$, then $\mathfrak{m}\mid\omega(N)$, which implies that $\mathfrak{m}\mid y\mathfrak{m}^0z=\mathfrak{m}_k$, where $z$ coincides with the path $v$ in Def\mbox inition \ref{def:maxS} $ii)$. If $\omega_n^{l_n}\omega_0^0\notin I_N$, then $\mathfrak{m}\mid vus_1^0\rho\omega(N)^{\eta-2}$, where $\rho\in\mathcal{P}(Q_N)$ is such that $s_1^0\rho\in T_{\omega(N)}$. Thus, $\mathfrak{m}\mid vus_1^0\cdots s_1^{\sigma_1-1}=\mathfrak{m}_k$, since $s_1\nmid\mathfrak{m}$. In conclusion, $\mathfrak{m}=\mathfrak{m}_k$.\\
{\bf Case 3:} There exists $i$ such that $s_i^0<_f\mathfrak{m}^0\leq_f s_{i+1}^0$.\\
\hspace*{\parindent}
Let $x$, $y$, $y'$, $z$, $z'$ be paths in $Q_N$ such that $\omega(N)=xs_i^0y\mathfrak{m}^0y'$ and $\mathfrak{m}^0y'=zs_{i+1}^0z'$. Thus, $\mathfrak{m}\in E_{zs_{i+1}^0z'\omega(N)^{(\eta-1)}}$ and $s_{i+1}\in E_{s_{i+1}^0z'\omega(N)^{(\eta-1)}}$. Since $s_{i+1}\nmid \mathfrak{m}$, it follows that $\mathfrak{m}$ is a subpath of $yzs_{i+1}^0\cdots s_{i+1}^{\sigma_{i+1}-1}=\mathfrak{m}_i$. In conclusion, $\mathfrak{m}=\mathfrak{m}_i$.\\
Therefore we have the desired inclusion $\mathcal{M}_N\subseteq\mathcal{S}_N^{\ast}$, which completes the proof of the theorem.
\end{proof}

\begin{remark}\label{rmk:maxlength}
We denote $\mu_{i}:=l_{\mathfrak{m}_i}$. Then, by Def\mbox inition \ref{def:maxS}, we have that:
\begin{figure}[!htb]
   \begin{minipage}{0.48\textwidth}
\centering
$
  \mathfrak{m}_i^0=\begin{cases}
    \omega_0^0, & \text{if $i=0$ and $\omega_n^{l_n}\omega_0^0\in I_N$}.\\
    s_k^1  & \text{if $i=0$ and $\omega_n^{l_n}\omega_0^0\notin I_N$}.\\
    s_i^1, & \text{if $i\neq0$}.
  \end{cases}
$
 %\caption{Quiver $Q_{N_a}$}\label{Fig:Quiver1}
   \end{minipage}\hfill
   \begin{minipage}{0.48\textwidth}
 \centering
 $
  \mathfrak{m}_i^{\mu_{i}}=\begin{cases}
    \omega_n^{l_n}, & \text{if $i=k$ and $\omega_n^{l_n}\omega_0^0\in I_N$}.\\
   s_{1}^{\sigma_{1}-1}, & \text{if $i=k$ and $\omega_n^{l_n}\omega_0^0\notin I_N$}.\\
   s_{i+1}^{\sigma_{i+1}-1}, & \text{if $i\neq k$}.
  \end{cases}
$
\end{minipage}
\end{figure}

\end{remark}

\begin{lemma}\label{lem:specialcase}
Let $A$ be a special multiserial and locally monomial algebra and let $N\in\mathcal{D}_{Q,I}$. If $|\mathcal{S}_N|=1$ and $\sigma_1>1$, then $A_N$ is a UMP algebra if and only if $\omega_n^{l_n}\omega_0^0\notin I_N$.
\end{lemma}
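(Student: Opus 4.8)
The plan is to reduce the whole statement to the explicit description of $\mathcal{M}_N$ furnished by Theorem \ref{thm:maxN}. Since $|\mathcal{S}_N|=1$, we have $k=1$ and $\mathcal{S}_N=\{s_1\}$, so Theorem \ref{thm:maxN} gives $\mathcal{M}_N=\mathcal{S}_N^{\ast}=\{\mathfrak{m}_0+I_N,\mathfrak{m}_1+I_N\}$. Throughout I will use that, for the component algebra $A_N$, the UMP property is equivalent to the condition that two distinct maximal paths in $\mathcal{M}_N$ share no common arrow: every arrow of $(Q_N)_1$ extends to at least one maximal path, so uniqueness of this extension fails precisely when some arrow divides two distinct elements of $\mathcal{M}_N$. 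The argument then amounts to comparing $\mathfrak{m}_0$ and $\mathfrak{m}_1$ in the two cases of Definition \ref{def:maxS}, and the hypothesis $\sigma_1>1$ will be what distinguishes them.

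For the direction ``$\omega_n^{l_n}\omega_0^0\notin I_N\Rightarrow A_N$ is UMP'', I would appeal directly to Definition \ref{def:maxS} $iii)$: when $\omega_n^{l_n}\omega_0^0\notin I_N$ and $k=1$, it reads $\mathfrak{m}_0=\mathfrak{m}_1=vus_1^0\cdots s_1^{\sigma_1-1}$, so $\mathcal{M}_N=\{\mathfrak{m}_0+I_N\}$ is a singleton. With only one maximal path the disjointness condition is vacuous, and hence $A_N$ is a UMP algebra.

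For the converse I would argue contrapositively: assuming $\omega_n^{l_n}\omega_0^0\in I_N$, I exhibit two distinct maximal paths with a common arrow. By Definition \ref{def:maxS} $ii)$ (with $k=1$) we have $\mathfrak{m}_0=us_1^0\cdots s_1^{\sigma_1-1}$ with $us_1\in E_{\omega(N)}$, and $\mathfrak{m}_1=s_1^1\cdots s_1^{\sigma_1}v$ with $s_1v\in T_{\omega(N)}$; in particular both are subpaths of $\omega(N)$. Since $\sigma_1>1$, the index range $1\leq j\leq\sigma_1-1$ is non-empty, so the arrow $s_1^1$ divides both $\mathfrak{m}_0$ and $\mathfrak{m}_1$, giving a common arrow. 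It then remains to check that the two paths are genuinely distinct: because $\omega(N)$ is repetition-free and $s_1^0$ occurs in $\omega(N)$ immediately before $s_1^1$, the arrow $s_1^0$ divides $\mathfrak{m}_0$ but cannot occur in the suffix $\mathfrak{m}_1$ of $\omega(N)$ starting at $s_1^1$; hence $\mathfrak{m}_0\neq\mathfrak{m}_1$ as maximal paths. Thus $\mathcal{M}_N$ contains two distinct maximal paths sharing the arrow $s_1^1$, and $A_N$ is not UMP.

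The only genuinely delicate point, and the one I would spell out most carefully, is the distinctness of $\mathfrak{m}_0$ and $\mathfrak{m}_1$ in the case $\omega_n^{l_n}\omega_0^0\in I_N$: both live inside the repetition-free path $\omega(N)$, and it is exactly this repetition-freeness together with $\sigma_1>1$ that simultaneously forces the overlap along $s_1^1$ and prevents the two paths from coinciding. Everything else is bookkeeping from Definition \ref{def:maxS} and Theorem \ref{thm:maxN}.
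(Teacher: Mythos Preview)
Your proof is correct and follows the same approach as the paper: both directions are read off from the explicit description $\mathcal{M}_N=\mathcal{S}_N^{\ast}=\{\mathfrak{m}_0+I_N,\mathfrak{m}_1+I_N\}$ given by Theorem \ref{thm:maxN} and Definition \ref{def:maxS}, with case $iii)$ collapsing the set to a singleton and case $ii)$ (using $\sigma_1>1$) producing two distinct paths sharing $s_1^1$. Your justification of the distinctness $\mathfrak{m}_0\neq\mathfrak{m}_1$ via the repetition-freeness of $\omega(N)$ is in fact more explicit than the paper's, which simply asserts it by reference to Definition \ref{def:maxS} $ii)$.
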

\begin{proof}
Suppose that $|\mathcal{S}_N|=1$ and $\sigma_1>1$. If $\omega_n^{l_n}\omega_0^0\notin I_N$, then $\mathcal{M}_N=\{s_1^1\cdots s_1^{\sigma_1}qps_1^0\cdots s_1^{\sigma_1-1}+I_N\}$, where $p$ and $q$ are paths in $\mathcal{P}(Q_N)$ such that $ps_1^0\in E_{\omega(N)}$ and $s_1^{\sigma_1}q\in T_{\omega(N)}$. Thus, $A_N$ is a UMP algebra. Conversely, if $\omega_n^{l_n}\omega_0^0\in I_N$, then $\mathfrak{m}_0+ I_N$ and $\mathfrak{m}_1+I_N$ are two distinct non-disjoint maximal paths by Def\mbox inition \ref{def:maxS} $ii)$. Consequently, $A_N$ is not a UMP algebra.
\end{proof}

\begin{lemma}\label{lem:maximal}
Let $A$ be a special multiserial and locally monomial algebra and let $N\in\mathcal{D}_{Q,I}$. Let $\mathfrak{m}+I_N,\, \mathfrak{m}'+I_N$ be two distinct non-disjoint paths in $\mathcal{M}_N$. Then $\mathfrak{m}$ and $\mathfrak{m}'$ are either of the form $\mathfrak{m}=x\rho,\, \mathfrak{m}'=\rho y$ or $\mathfrak{m}=\rho x,\, \mathfrak{m}'= y\rho$, for some non-trivial $x,y,\rho\in\mathcal{P}(Q_N)$.
\end{lemma}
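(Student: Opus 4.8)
The plan is to realize both maximal paths as factors of one (possibly periodic) word built from $\omega(N)$, and then reduce the statement to an elementary fact about two overlapping intervals on a line. Only two features of maximal paths are really needed. On one hand, by the definition of $\eta=\eta_N$, every element of $\mathcal{M}_N$ is a subpath of $\omega(N)^{(\eta)}$, so $\mathfrak{m}$ and $\mathfrak{m}'$ both divide $\omega(N)^{(\eta)}$. On the other hand, neither of the two maximal paths is a proper subpath of the other: if $\mathfrak{m}=z\,\mathfrak{m}'\,z'$ with, say, $z$ non-trivial, then its last arrow $\beta=z^{l_z}$ gives $\beta\mathfrak{m}'\mid\mathfrak{m}$, whence $\beta\mathfrak{m}'\notin I_N$ (otherwise $\mathfrak{m}\in I_N$), contradicting the maximality of $\mathfrak{m}'+I_N$, which forces $\beta\mathfrak{m}'\in I_N$; the case $z$ trivial, $z'$ non-trivial is symmetric, and as $\mathfrak{m}\neq\mathfrak{m}'$ this excludes divisibility in either direction.

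Next I would align the two paths, splitting according to Lemma \ref{lem:compchar}. If $\omega_n^{l_n}\omega_0^0\in I_N$ (the non-cyclic shape), then no non-zero path can traverse the seam $\omega_n^{l_n}\omega_0^0$, so by Theorem \ref{thm:omegaN} every maximal path already divides the repetition-free word $\omega(N)$; in that word the common arrow of $\mathfrak{m}$ and $\mathfrak{m}'$ occurs at a single position, and the two paths are factors of $\omega(N)$ through that position. If $\omega_n^{l_n}\omega_0^0\notin I_N$ (the cyclic shape), I would instead pass to the bi-infinite periodic word $w$ determined by $w_{t}=w_{t+L}$, with $L:=l_{\omega(N)}+1$, obtained by reading off $\omega(N)$; realize $\mathfrak{m}$ and $\mathfrak{m}'$ as factors $w_{[a,b]}$ and $w_{[c,d]}$, and use that a common arrow occurs at positions $p\equiv p'\pmod L$. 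Translating the occurrence of $\mathfrak{m}'$ by the multiple $p-p'$ of $L$, which by periodicity reads off the very same path, one may assume $[a,b]$ and $[c,d]$ share a position, so the two intervals overlap.

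Finally I would invoke interval combinatorics. The two position-intervals overlap, and neither contains the other, since interval containment would force divisibility of the corresponding paths, already excluded in the first step (in particular $a=c$ is impossible, as it would make one interval contain the other). Hence, up to swapping $\mathfrak{m}$ and $\mathfrak{m}'$, we have $a<c\le b<d$, and setting $\rho:=w_{[c,b]}$, $x:=w_{[a,c-1]}$, $y:=w_{[b+1,d]}$ yields $\mathfrak{m}=x\rho$ and $\mathfrak{m}'=\rho y$ with all three factors non-trivial; the opposite inequality $c<a\le d<b$ gives the other form $\mathfrak{m}=\rho x$, $\mathfrak{m}'=y\rho$.

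The main obstacle is the alignment in the cyclic case: because $\omega(N)^{(\eta)}$ has repeated arrows, one must check carefully that translating by a full period genuinely re-presents the same maximal path, and that the chosen common occurrence really produces overlapping intervals rather than merely period-shifted copies. Once this bookkeeping is settled, the maximality step and the interval argument are routine, and an alternative, more computational route is also available by feeding the explicit forms of the $\mathfrak{m}_i$ from Theorem \ref{thm:maxN} into the same overlap analysis.
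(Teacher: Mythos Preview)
Your argument is correct, but the paper's proof is considerably more direct and avoids the alignment machinery entirely. The paper simply takes the greatest common subpath $\rho$ containing a shared arrow, writes $\mathfrak{m}=x\rho x'$ and $\mathfrak{m}'=y\rho y'$, and uses the special multiserial condition at the endpoints of $\rho$: if both $x$ and $y$ were non-trivial then $x^{l_x}\neq y^{l_y}$ (by maximality of $\rho$) while $x^{l_x}\rho^0\notin I_N$ and $y^{l_y}\rho^0\notin I_N$, contradicting that at most one arrow precedes $\rho^0$ outside $I_N$; likewise on the right. Maximality then forces one of $x,y$ and one of $x',y'$ to be trivial in the required pattern. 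No embedding into $\omega(N)^{(\eta)}$, no case split on cyclic vs.\ non-cyclic, and no period-shift bookkeeping are needed.

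Your route, by contrast, leans on the structural results already set up (Lemma~\ref{lem:compchar}, Theorem~\ref{thm:omegaN}, the definition of $\eta$) to linearize the problem and then invokes pure interval combinatorics. This is a genuinely different decomposition: it trades the local special-multiserial argument at the two ends of $\rho$ for a global picture of $\mathcal{M}_N$ sitting inside a single (periodic) word. What you gain is a uniform geometric viewpoint that makes the overlap shape visually obvious; what you pay is the alignment step in the cyclic case, which, as you note, needs care (choosing a specific occurrence of the shared arrow and translating by the right multiple of $L$). Your side remark about using Theorem~\ref{thm:maxN} is also legitimate and non-circular, since that theorem is proved before the present lemma and does not depend on it.
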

\begin{proof}
Let $\mathfrak{m}+I_N, \mathfrak{m}'+I_N\in \mathcal{M}_N$ be two maximal non-disjoint paths. Since $A$ is a locally monomial algebra there exists a common arrow to $\mathfrak{m}$ and $\mathfrak{m}'$. Let $\alpha$ be this common arrow and consider $\rho$ as the greatest path common to $\mathfrak{m}$ and $\mathfrak{m}'$ containing the arrow $\alpha$. Let $x$, $x'$, $y$, $y'$ be paths in $Q_N$ such that $\mathfrak{m}=x\rho x'$ and $\mathfrak{m}'=y\rho y'$. If $x$ and $y$ are non-trivial, then $x^{l_x}\neq y^{l_y}$ by the choice of $\rho$. Now, since $\mathfrak{m}\notin I_N$ and $\mathfrak{m}'\notin I_N$, it follows that $x^{l_x}\rho^0\notin I_N$ and $y^{l_y}\rho^0\notin I_N$, which is a contradiction with the fact that $A$ is a special multiserial algebra. Hence, $x$ is trivial or $y$ is trivial. Analogously, $x'$ is trivial or $y'$ is trivial. In addition, observe that neither $x$ and $x'$ nor $y$ and $y'$ can be both trivial, by the maximality of $\mathfrak{m}$ and $\mathfrak{m}'$ and the hypothesis that they are dif\mbox ferent. In conclusion, either $\mathfrak{m}=\rho x'$ and $\mathfrak{m}'=y \rho$, with $x'$ and $y$ non-trivial paths or $\mathfrak{m}=x\rho$ and $\mathfrak{m}'=\rho y'$, with $x$ and $y'$ non-trivial paths.
\end{proof}

\begin{theorem}\label{thm:umpchar1}
Let $A$ be a special multiserial and locally monomial algebra and let $N\in\mathcal{D}_{Q,I}$. The algebra $A_N$ is a UMP algebra if and only if it satisf\mbox ies at least one of the following conditions:
\begin{enumerate}
\item[i)] $\sigma_i=1$ for all $i=1,\ldots,k$.
\item[ii)] $k=1$ and $\omega_n^{l_n}\omega_0^0\notin I_N$,
\end{enumerate}
where $\mathcal{S}_N=\{s_1,\ldots, s_k\}$ and $\sigma_i=l_{s_i}$, as before.
\end{theorem}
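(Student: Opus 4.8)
The plan is to turn the UMP property into a finite combinatorial check and then read it off from the explicit data already assembled for the maximal paths. We may assume $\mathcal{S}_N\neq\emptyset$, i.e. $k\geq 1$; the case $\mathcal{S}_N=\emptyset$ is covered by the earlier lemma (and condition i) holds vacuously). By Theorem \ref{thm:maxN} we have $\mathcal{M}_N=\mathcal{S}_N^{\ast}=\{\mathfrak{m}_0+I_N,\ldots,\mathfrak{m}_k+I_N\}$, so $A_N$ is UMP if and only if the paths $\mathfrak{m}_0,\ldots,\mathfrak{m}_k$ are pairwise disjoint. I would run the whole argument using the first and last arrows recorded in Remark \ref{rmk:maxlength} together with Lemma \ref{lem:maximal}, which guarantees that any two distinct non-disjoint maximal paths must overlap head-to-tail through a common non-trivial factor $\rho$ (so that $\mathfrak{m}=x\rho,\ \mathfrak{m}'=\rho y$ or $\mathfrak{m}=\rho x,\ \mathfrak{m}'=y\rho$).

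For the implication ``not i) and not ii) $\Rightarrow$ not UMP'' I would argue by contrapositive. Failing i) means some $\sigma_j>1$. If $k=1$, then failing ii) forces $\omega_n^{l_n}\omega_0^0\in I_N$, and Lemma \ref{lem:specialcase} immediately gives that $A_N$ is not UMP. If $k\geq 2$, I would exhibit an explicit overlap: by Definition \ref{def:maxS}, $\mathfrak{m}_{j-1}$ contains the factor $s_j^0\cdots s_j^{\sigma_j-1}$ while $\mathfrak{m}_j$ begins with $s_j^1$, so both contain the arrow $s_j^1$, which exists exactly because $\sigma_j\geq 2$. Moreover, when $k\geq 2$ these two maximal paths are distinct, even in the cyclic situation $\mathfrak{m}_k=\mathfrak{m}_0$ (their initial arrows $s_{j-1}^1$ and $s_j^1$ differ, as the relations in $\mathcal{S}_N$ are distinct and $\omega(N)$ is repetition-free). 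Hence they are distinct and non-disjoint, so $A_N$ is not UMP.

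For the converse ``i) or ii) $\Rightarrow$ UMP'': if ii) holds, then $\mathfrak{m}_0=\mathfrak{m}_k$ collapse and $\mathcal{M}_N$ has a single element, so UMP is automatic. The substantial case is i), with every $\sigma_i=1$. Here I would suppose for contradiction that distinct maximal paths $\mathfrak{m}_a,\mathfrak{m}_b$ are non-disjoint and apply Lemma \ref{lem:maximal}, writing (up to symmetry) $\mathfrak{m}_a=x\rho$, $\mathfrak{m}_b=\rho y$ with $\rho,x,y$ non-trivial. By Remark \ref{rmk:maxlength}, since every $\sigma_i=1$, the last arrow of $\mathfrak{m}_a$ (hence of $\rho$) is $s_{a+1}^0$ in the generic case (or $\omega_n^{l_n}$ when $a=k$ and $\omega_n^{l_n}\omega_0^0\in I_N$). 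Because $y$ is non-trivial, $\mathfrak{m}_b$ must prolong this last arrow. The crucial point is that every relation in $\mathcal{S}_N=R_N\setminus\Omega_N$ is not of the form $\omega_c^{l_c}\omega_{c+1}^0$, so the pair $s_{a+1}^0 s_{a+1}^1$ sits inside a single $\omega_c$ at a non-ramified vertex; thus $s_{a+1}^1$ is the unique continuation, forcing the next factor of $\mathfrak{m}_b$ to be $s_{a+1}^0 s_{a+1}^1=s_{a+1}\in I_N$, contradicting $\mathfrak{m}_b\notin I_N$. The boundary subcase where $\rho$ ends with $\omega_n^{l_n}$ is excluded likewise: $\omega_n$ is terminal in $N$, so no arrow of $(Q_N)_1$ prolongs $\omega_n^{l_n}$, again contradicting that $y$ is non-trivial. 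The symmetric form $\mathfrak{m}_a=\rho x$, $\mathfrak{m}_b=y\rho$ is handled by the dual argument on first arrows, using that $\omega_0$ is initial in $N$ for the boundary arrow $\omega_0^0$.

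I expect the main obstacle to be precisely this last step, showing pairwise disjointness when all $\sigma_i=1$. The delicate parts are a correct bookkeeping of the boundary cases arising from the three alternatives of Definition \ref{def:maxS} (and from the cyclic identification $\mathfrak{m}_0=\mathfrak{m}_k$), and a careful justification that the non-join property of the relations in $\mathcal{S}_N$, combined with special multiseriality, genuinely forbids every head-to-tail overlap, not merely the interior overlaps already ruled out by $\sigma_i=1$.
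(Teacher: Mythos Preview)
Your proposal is correct and follows essentially the same route as the paper, with the two implications organized in the opposite order. Both proofs hinge on Lemma~\ref{lem:maximal}, Remark~\ref{rmk:maxlength}, and the fact that, since $\mathfrak{m}_b$ and $s_{a+1}$ divide powers of the repetition-free path $\omega(N)$, the arrow following $s_{a+1}^{\sigma_{a+1}-1}$ is uniquely determined. The paper phrases this last step as ``uniqueness of the factorization in $\omega(N)^{(\eta)}$'', while you phrase it as ``$s_{a+1}^0s_{a+1}^1$ sits inside a single $\omega_c$ at a non-ramified vertex''; your formulation is correct (indeed, $s_{a+1}\in\mathcal{S}_N$ with $\sigma_{a+1}=1$ cannot be of the form $\omega_c^{l_c}\omega_d^0$, so $s_{a+1}^0$ is an interior arrow of some $\omega_c$), and arguably more transparent.

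One small imprecision: in your boundary subcase you write ``$\omega_n$ is terminal in $N$, so no arrow of $(Q_N)_1$ prolongs $\omega_n^{l_n}$''. This is not literally true; there may well be arrows in $(Q_N)_1$ starting at $t(\omega_n)$. The correct argument (which the paper gives) is that since $\mathfrak{m}_b\in\mathcal{T}_N$, the arrow $y^0$ following $\omega_n^{l_n}$ must be $\omega_0^0$, and then $\omega_n^{l_n}\omega_0^0\in I_N$ contradicts $\mathfrak{m}_b\notin I_N$. This is an easy fix and does not affect the validity of your overall plan.
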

\begin{proof}
Suppose that $A_N$ is not a UMP algebra. We will prove that $\sigma_h>1$, for some $1\leq h\leq k$, and either $k>1$ or $\omega_n^{l_n}\omega_0^0\in I_N$. Since $A_N$ is not a UMP algebra, there exist distinct and non-disjoint maximal paths $\mathfrak{m}_i+I_N$ and $\mathfrak{m}_j+I_N$, with $i<j$. In particular, $i\neq k$ and $j\neq 0$. By Lemma \ref{lem:maximal}, $\mathfrak{m}_i$ and $\mathfrak{m}_j$ are either of the form $\mathfrak{m}_i=x\rho$, $\mathfrak{m}_j=\rho y$ or $\mathfrak{m}_i=\rho x$, $\mathfrak{m}_j=y\rho$, for some non-trivial $x,y,\rho\in\mathcal{P}(Q_N)$. In the f\mbox irst case, if $\mathfrak{m}_i=x\rho$ and $\mathfrak{m}_j=\rho y$, then by Remark \ref{rmk:maxlength} we have $\rho^{l_{\rho}}=\mathfrak{m}_i^{\mu_i}=s_{i+1}^{\sigma_{i+1}-1}$. Since $\mathfrak{m}_j\mid\omega(N)^{(\eta)}$ and $s_{i+1}\mid\omega(N)^{(\eta)}$, we obtain that $y^0=s_{i+1}^{\sigma_{i+1}}$, by the uniqueness of the factorization. Thus, $s_{i+1}^{\sigma_{i+1}-1}s_{i+1}^{\sigma_{i+1}}\mid\mathfrak{m}_j$ and we have that $\sigma_{h}>1$, with $h=i+1$. In the second case, if $\mathfrak{m}_i=\rho x$ and $\mathfrak{m}_j=y\rho$, then we claim that either $\rho^{l_{\rho}}=\mathfrak{m}_j^{\mu_j}=s_{j+1}^{\sigma_{j+1}-1}$ for $j<k$ or $j=k$ and  $\rho^{l_{\rho}}=\mathfrak{m}_k^{\mu_k}=s_{1}^{\sigma_{1}-1}$. Indeed, if $j<k$ then $\mathfrak{m}_j^{\mu_j}=s_{j+1}^{\sigma_{j+1}-1}$, by Remark \ref{rmk:maxlength}. Now, if $j=k$ then $\omega_n^{l_n}\omega_0^0\notin I_N$. Otherwise, if $\omega_n^{l_n}\omega_0^0\in I_N$, we have that $\mathfrak{m}_k^{\mu_k}=\omega_n^{l_n}$. This implies that, $x^0=\omega_0^0$, because $\omega_n^{l_n}x^0=\mathfrak{m}_k^{\mu_k}x^0=\rho^{l_{\rho}}x^0\mid \mathfrak{m}_i$ and the uniqueness of the factorization. Nevertheless, this is impossible because $\mathfrak{m}_i$ contains no paths in $I_N$. Thus, $\rho^{l_{\rho}}=\mathfrak{m}_k^{\mu_k}=s_{1}^{\sigma_{1}-1}$, by Remark \ref{rmk:maxlength}. In a similar manner as in the f\mbox irst reasoning above, under the hypotheses $\mathfrak{m}_i=\rho x$ and $\mathfrak{m}_j=y\rho$, we obtain that either $x^0=s_{j+1}^{\sigma_{j+1}}$ for $j<k$ or $x^0=s_1^{\sigma_1}$ for $j=k$. Consequently, $\sigma_{h}>1$, where either $h=j+1$ or $h=1$.

Now, notice that if $\omega_n^{l_n}\omega_0^0\notin I_N$, it follows from Def\mbox inition \ref{def:maxS} $iii)$ that $k=|\mathcal{M}_N|>1$, which proves that $k>1$ or $\omega_n^{l_n}\omega_0^0\in I_N$.

Conversely, suppose that $k>1$ or $\omega_n^{l_n}\omega_0^0\in I_N$ and $\sigma_i>1$, for some $i=1,\ldots, k$. Therefore, $|\mathcal{M}_N|\geq k>1$ or $|\mathcal{M}_N|=k+1>1$. By Remark \ref{rmk:maxlength}, we obtain that $\mathfrak{m}_i^0=s_i^1$ and, by Def\mbox inition \ref{def:maxS}, it follows that $s_i^0\cdots s_{i}^{\sigma_i}\in T_{\mathfrak{m}_{i-1}}$, which provides $s_i^1\mid\mathfrak{m}_{i-1}$. Thus, $\mathfrak{m}_{i-1}+I_N$ and $\mathfrak{m}_i+I_N$ are non-disjoint maximal paths in $A_N$, i.e., $A_N$ is not a UMP algebra, which completes the proof.
\end{proof}

For the classif\mbox ication of UMP special multiserial locally monomial algebras, we need the following def\mbox inition.

\begin{definition}\label{dwrel}
Let $A=\Bbbk Q/I$ be a special multiserial and locally monomial algebra and let $r$ be a zero relation in $I$ of length greater than two. We say that $r$ is a \emph{$\omega$-relation} if $r$ is the unique element of $I$ that is a subpath of $r$ (i.e. there is no a proper subpath of $r$ in $I$).
\end{definition}

\begin{remark}\label{romrelnw}
\begin{enumerate}[$i)$]
\item Assume that $r$ is a $\omega$-relation. Using Lemma \ref{lem:components}, we can consider the component $N(r)$. We have that $r\in \mathcal{T}_{N(r)}$ and, in particular, $r$ is a subpath of a power of $\omega(N(r))$. In fact, from Def\mbox inition \ref{dwrel}, for all the arrows $\alpha,\beta\in Q_1$ such that $t(\omega_{\alpha})=s(\omega_{\beta})$, the path $\omega_{\alpha}^{l_{\alpha}}\omega_{\beta}^0\in I$ can not be in $I$ and at the same time be a subpath of $r$. Hence, by \eqref{eq:def2T}, we obtain that $r\in \mathcal{T}_{N(r)}$. By \eqref{eq:defT}, it follows that $r$ is a subpath of a power of $\omega(N(r))$.
\item Let $N\in \mathcal{D}_{Q,I}$. The set of $\omega$-relations in $\mathcal{P}(Q_N)$ conicides with the set of relations of $\mathcal{S}_N$ of length greater than two. Let's verify this claim as follows. First, let $r$ be a $\omega$-relation in $\mathcal{P}(Q_N)$. Since $r\in I\cap \mathcal{P}(Q_N)=I_N$ and $A_N$ is monomial, it follows that there exists a subpath $r'$ of $r$ in $R_N$.
%Since $r$ is a subpath of $\omega(N)^{(l)}$ for some $l\in \mathbb{Z}^+$, we have that $r'$ is also a subpath of $\omega(N)^{(l)}$, which implies that $r'\in \mathcal{T}_N$.
Since $r'\in R_N\subseteq I_N\subseteq I$, we have that $r'=r$, and therefore, $r\in R_N$. Even more, we have that $r\notin \Omega_N$ because the elements of $\Omega_N$ are paths in $I$ of length two, and then $r\in \mathcal{S}_N$. This proves that $\{\omega\mbox{- relations}\}\cap \mathcal{P}(Q_N)\subseteq \mathcal{S}_N$.\\
For the converse, suppose that $r$ is a relation in $\mathcal{S}_N$ with length greater than two. Since $\mathcal{S}_N\subseteq I$, we get $r\in I$. Now, suppose that $r'$ is a subpath of $r$ in $I$. Then, $r'\in \mathcal{P}(Q_N)$ and hence $r'\in I_N$. Since $r\in \mathcal{S}_N\subseteq R_N$ and $R_N$ is minimal, we get $r=r'$. Thus, $r$ is a $\omega$-relation.
\item Note that every zero relation of length greater than two in $R$ is an $\omega$-relation. Moreover, if $A$ is monomial, then $\{\omega-\mbox{relations}\}\subseteq \cup_{N\in \mathcal{D}} \mathcal{S}_N\subseteq \cup_{N\in \mathcal{D}} R_N \subseteq R \subseteq I$. In particular, if $A$ is monomial, then a zero relation $r$ in $I$ is a $\omega$-relation if and only if $r$ is a zero relation of $R$ of length greater than two.
\item Let $J$ be a f\mbox inite subset of the set indexing the relations of $R$. Suppose that every nonzero relation \(r = \sum_{j \in J} \lambda_j p_j \in R\), satisf\mbox ies the condition that for any pair of arrows \( \beta\),  \(\gamma\) and any \( j \in J \), both \(\beta p_j^0\) and \(p_j^{l_j} \gamma\) are in the ideal $I$. Under these conditions, we obtain that:
\begin{enumerate}
    \item If \( A = \Bbbk Q / I \) is special multiserial algebra, then for every path $p_j$ that appears in \( r \), we have \(\omega\big( N(p_j) \big) = p_j\), where \( N(p_j) \) is the weakly connected component containing \( p_j \). This result, in particular, implies that \( A \) is locally monomial.
    \item The products \( r \gamma = \sum_{j \in J} \lambda_j \, \hat{p}_j \big( p_j^{l_j} \gamma \big)\), and \(\beta r = \sum_{j \in J} \big( \beta p_j^0 \big) \lambda_j \tilde{p}_j\), can each be expressed as a linear combination of quadratic zero relations. This implies that any element \( p \in I \), can be written in the form:
 \[
 p = \sum_{\substack{r \ \text{zero relation in} \ R}} \lambda_r \, p_r \, r \, p'_r  
 \;+\;  
 \sum_{\substack{r \ \text{nonzero relation in} \ R}} \lambda_r \, r.
 \]
 In particular, a zero relation \( r \in I \) is an \(\omega\)-relation if and only if \( r \) is a zero relation in \( R \) of length greater than two.
\end{enumerate}
\end{enumerate}
\end{remark}

\begin{theorem}\label{thm:main}
Let $A=\Bbbk Q/I$ be a special multiserial and locally monomial algebra. Then, $A$ is a UMP algebra if and only if for every $\omega$-relation $r$, the path $\omega(N(r))$ is cyclic and $r$ is the unique relation in $R_{N(r)}$ that is a subpath of some power of $\omega(N(r))$, i.e.
\begin{equation}
R_{N(r)}\cap\{\text{subpaths of}\,\, \omega(N(r))^{(p)}:\,p\in\mathbb{Z}^+\}=\{r\}.
\end{equation}
\end{theorem}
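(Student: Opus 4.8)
The plan is to localize the problem to the weakly connected components of $G_{Q,I}$ via Corollary \ref{cor:local} and then translate the componentwise criterion of Theorem \ref{thm:umpchar1} into the stated condition on $\omega$-relations. Throughout I read ``$\omega(N)$ is cyclic'' as the assertion that $N$ is of the second (cyclic) form in Lemma \ref{lem:compchar}, equivalently $\omega_n^{l_n}\omega_0^0\notin I_N$; recall from Definition \ref{def:ramgraph} that the wrap-around edge $\omega_n\to\omega_0$ exists precisely when this composition avoids $I$. First I would record, using Remark \ref{romrelnw}\,ii), that for a fixed $N$ the $\omega$-relations lying in $\mathcal{P}(Q_N)$ are exactly the elements $s_i\in\mathcal{S}_N$ of length greater than two, i.e. those with $\sigma_i>1$. (In fact every element of $\mathcal{S}_N=R_N\setminus\Omega_N$ has length at least three, since special multiseriality forces any length-two relation $\alpha\beta\in I_N$ to satisfy $\alpha=\omega_\alpha^{l_\alpha}$ and $\beta=\omega_\beta^0$, hence to lie in $\Omega_N$; but I will only need the equivalence that $N$ carries an $\omega$-relation if and only if some $\sigma_i>1$.) In particular a component $N$ carries no $\omega$-relation exactly when $\sigma_i=1$ for all $i$, which is condition $i)$ of Theorem \ref{thm:umpchar1}.

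The second ingredient is the identification of the left-hand set of the displayed equation. Since $R_N$ consists of non-trivial paths, $R_N\cap\{\text{subpaths of }\omega(N)^{(p)}:p\in\mathbb{Z}^+\}=R_N\cap\mathcal{T}_N$, and because $\mathcal{S}_N\subseteq\mathcal{T}_N$ this equals $(\Omega_N\cap\mathcal{T}_N)\cup\mathcal{S}_N$. I would then show $\Omega_N\cap\mathcal{T}_N\subseteq\{\omega_n^{l_n}\omega_0^0\}$: any $\omega_a^{l_a}\omega_b^0\in\Omega_N\cap\mathcal{T}_N$ is a factor of itself of the form $\omega_\alpha^{l_\alpha}\omega_\beta^0\in I_N$, so the characterization \eqref{eq:def2T} (equivalently Theorem \ref{thm:omegaN}) forces $\omega_a=\omega_n$ and $\omega_b=\omega_0$. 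Hence, when $\omega(N)$ is cyclic, so that $\omega_n^{l_n}\omega_0^0\notin I_N$ and thus $\notin\Omega_N$, one gets $R_N\cap\mathcal{T}_N=\mathcal{S}_N$, while otherwise $R_N\cap\mathcal{T}_N=\mathcal{S}_N\cup\{\omega_n^{l_n}\omega_0^0\}$.

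With these two facts the theorem follows by comparing conditions componentwise. For the forward direction, if $A$ is UMP and $r$ is an $\omega$-relation, then $A_{N(r)}$ is UMP by Corollary \ref{cor:local}; since $N(r)$ carries an $\omega$-relation, condition $i)$ of Theorem \ref{thm:umpchar1} fails, so condition $ii)$ must hold, giving that $\omega(N(r))$ is cyclic and $k=\lvert\mathcal{S}_{N(r)}\rvert=1$. As $r\in\mathcal{S}_{N(r)}$, this yields $\mathcal{S}_{N(r)}=\{r\}$, and the cyclic computation of the previous paragraph gives $R_{N(r)}\cap\mathcal{T}_{N(r)}=\{r\}$, which is exactly the displayed equality. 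Conversely, assume the stated condition holds for every $\omega$-relation. A component carrying no $\omega$-relation satisfies condition $i)$, so its $A_N$ is UMP; a component $N=N(r)$ carrying an $\omega$-relation $r$ is, by hypothesis, cyclic and satisfies $\mathcal{S}_N=R_N\cap\mathcal{T}_N=\{r\}$, so $k=1$ and condition $ii)$ holds, making $A_N$ UMP. By Corollary \ref{cor:local}, $A$ is UMP.

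The main obstacle I anticipate is the set computation of the second paragraph: pinning down exactly which elements of $R_N$ sit inside some power of $\omega(N)$ is where the interplay between $\Omega_N$, the cyclicity of $\omega(N)$, and Theorem \ref{thm:omegaN} must be handled cleanly, since one must verify that the only element of $\Omega_N$ capable of appearing in a power of $\omega(N)$ is the wrap $\omega_n^{l_n}\omega_0^0$ and that it appears precisely in the non-cyclic case. A minor additional point to check is the reading of ``cyclic'' in the degenerate cases $n=0$ (single-vertex components, including the cyclic-quiver situation of Definition \ref{dwau}\,i)), where the equivalence ``$\omega(N)$ cyclic $\iff\omega_n^{l_n}\omega_0^0\notin I_N$'' continues to hold and the argument above goes through unchanged.
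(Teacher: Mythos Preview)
Your proposal is correct and follows essentially the same route as the paper: localize via Corollary~\ref{cor:local}, invoke Theorem~\ref{thm:umpchar1} componentwise, and identify $R_N\cap\mathcal{T}_N$ with $\mathcal{S}_N$ (in the cyclic case) through Theorem~\ref{thm:omegaN}/\eqref{eq:def2T}; the paper's converse is phrased as a contradiction while yours is direct, but the content is the same. One small caveat: your parenthetical assertion that every element of $\mathcal{S}_N$ has length at least three is false (for $Q\colon 1\xrightarrow{\alpha}2\xrightarrow{\beta}3$ with $I=\langle\alpha\beta\rangle$ one gets $\omega_\alpha=\omega_\beta=\alpha\beta$, $\Omega_N=\emptyset$, and $\mathcal{S}_N=\{\alpha\beta\}$), but as you correctly note you only use the weaker equivalence ``$N$ carries an $\omega$-relation $\iff$ some $\sigma_i>1$'', which does hold via Remark~\ref{romrelnw}\,ii).
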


\begin{proof}

First, suppose that $A$ is a UMP algebra. Let $r$ be a $\omega$-relation. By the parts $i)$ and $iii)$ of Remark \ref{romrelnw}, $r$ is a relation in $\mathcal{S}_{N(r)}$ and is a subpath of a power of $\omega(N(r))$. Because $l_r>1$, from Theorem \ref{thm:umpchar1} we obtain that $\mathcal{S}_{N(r)}=\{r\}$ and $\omega_n^{l_n}\omega_0^0\notin I_{N(r)}$. On the other hand, since $\omega_i^{l_i}\omega_{i+1}^0\neq 0$ for all $i\in \{0,\ldots,n\}$, Theorem \ref{thm:omegaN} implies that there are no subpaths of any power of $\omega(N(r))$ in $\Omega_{N(r)}$. Hence, by \eqref{eq:defT} and \eqref{eq:defT2}, we have that $R_{N(r)}\cap \mathcal{T}_{N(r)}\subseteq S_{N(r)}$. Therefore, the unique relation of $R_{N(r)}$ that is a subpath of some power of $\omega(N(r))$ is $r$.

Conversely, suppose that $R_{N(r)}\cap\{\text{subpaths of}\,\,\omega(N(r))^{(p)}:p\in\mathbb{Z}^+\}=\{r\}$ for every $\omega$-relation $r$. Let $N\in\mathcal{D}_{Q,I}$. Assume that $A$ is not UMP. By Theorem \ref{thm:umpchar1}, there is a relation $r$ in $\mathcal{S}_N$ of length greater than two, and moreover, it holds that $|\mathcal{S}_N|>1$ or $\omega_n^{l_n}\omega_0^0\in I$. Since $\omega_n^{l_n}\omega_0^0$ and the elements of $\mathcal{S}_N$ are subpaths of powers of $\omega(N)$, we obtain a subpath of a power of $\omega(N)$ in $R_N$ dif\mbox ferent from $r$, which is a contradiction. Thus, $A_N$ is UMP. In consequence, by Corollary \ref{cor:local}, $A$ is UMP.

\end{proof}

A direct application of Theorem \ref{thm:main} is in the monomial case, where its statement and interpretation are more manageable.

\begin{corollary}\label{cor:main}
Let $A=\Bbbk Q/I$ be a special multiserial and monomial algebra and let $R$ be a minimal set of relations such that $I=\langle R \rangle$. Then, $A$ is a UMP algebra if and only if for any zero relation $r\in R$ with length greater than two, there exists a path $u$,  such that $t(u)=s(u)$ and 
\begin{equation*}
R\cap\{\text{subpaths of}\,\, u^{(p)}:\,p\in\mathbb{Z}^+\}=\{r\}. 
\end{equation*}
\end{corollary}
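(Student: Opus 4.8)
The plan is to deduce the corollary directly from Theorem \ref{thm:main}. Since $A$ is monomial it is locally monomial, so Theorem \ref{thm:main} applies, and by parts $i)$ and $iii)$ of Remark \ref{romrelnw} the $\omega$-relations are exactly the zero relations $r\in R$ of length greater than two; moreover $R_{N}=R\cap\mathcal{P}(Q_{N})$ for every $N\in\mathcal{D}_{Q,I}$. Thus it suffices to prove, for each such $r$, that the existence of a cyclic path $u$ (that is, $t(u)=s(u)$) with $R\cap\{\text{subpaths of }u^{(p)}:p\in\mathbb{Z}^+\}=\{r\}$ is equivalent to the condition of Theorem \ref{thm:main}, namely that $\omega(N(r))$ is cyclic and $R_{N(r)}\cap\{\text{subpaths of }\omega(N(r))^{(p)}:p\in\mathbb{Z}^+\}=\{r\}$.

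For the implication from Theorem \ref{thm:main} to the corollary I would simply take $u=\omega(N(r))$. This path is cyclic by hypothesis, and since every power $\omega(N(r))^{(p)}$, and hence each of its subpaths, lies in $\mathcal{P}(Q_{N(r)})$, intersecting the set of these subpaths with $R$ gives the same result as intersecting with $R_{N(r)}=R\cap\mathcal{P}(Q_{N(r)})$; the desired equality then follows at once.

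The substantive direction is the converse. Given a cyclic witness $u$, I would first observe that no length-two subpath of any power $u^{(p)}$ can lie in $I$: such a subpath would itself be a minimal relation of $R$ (its only proper nontrivial subpaths are arrows, excluded by admissibility), distinct from $r$ because $r$ has length greater than two, contradicting the hypothesis. In particular the self-junction $u^{l_u}u^{0}\notin I$, so the periodic repetition $\cdots uuu\cdots$ follows the deterministic \emph{unique nonzero continuation} dictated by special multiseriality. Since $u$ then has no subpath of the form $\omega_a^{l_a}\omega_b^{0}\in I$, Lemma \ref{lem:components} places $u$ in a single $\mathcal{P}(Q_N)$, and as $r\mid u^{(p)}$ this component is $N(r)$; Theorem \ref{thm:omegaN} then yields $u\mid\omega(N(r))^{(l)}$. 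The heart of the argument is to show that $u$ and $\omega(N(r))$ trace the same cyclic orbit of the continuation dynamics: the arrows occurring in $u$ constitute a single cycle of this (deterministic) dynamics, and because $\omega(N(r))$ is repetition-free and contains every arrow of $u$, following the dynamics along $\omega(N(r))$ must eventually close up through the wrap $\omega_n^{l_n}\to\omega_0^{0}$, forcing $\omega_n^{l_n}\omega_0^{0}\notin I_{N(r)}$ (so $\omega(N(r))$ is cyclic) and identifying the arrow set of $\omega(N(r))$ with that single cycle. Consequently the powers of $u$ and of $\omega(N(r))$ have the same set of subpaths, which transfers the uniqueness $\{r\}$ from $u$ to $\omega(N(r))$; Theorem \ref{thm:main} then gives that $A$ is UMP.

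The main obstacle I anticipate is exactly this last identification: an arbitrary cyclic witness $u$ need not coincide with the canonical path $\omega(N(r))$, and a weakly connected component can give rise to a non-simple cycle (one revisiting vertices), so there may, a priori, be short cyclic subpaths that do not see the whole component. The device that overcomes this is the deterministic continuation dynamics, which are available precisely because the all-powers hypothesis rules out every quadratic relation along $u$; combined with the repetition-freeness of $\omega(N(r))$, this pins both $u$ and $\omega(N(r))$ to one and the same fundamental cycle and makes their subpath sets agree. Checking that the orbit genuinely closes through $\omega_n^{l_n}\omega_0^{0}$ rather than stalling at the end of $\omega(N(r))$ is the delicate point that simultaneously establishes cyclicity.
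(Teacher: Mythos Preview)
Your proposal is correct and follows the paper's approach, which is simply to invoke Theorem~\ref{thm:main} together with Remark~\ref{romrelnw}\,$iii)$; the paper's own proof is a one-line ``direct consequence'' that omits all details. The substantive converse direction you work out---showing via the deterministic successor dynamics that any cyclic witness $u$ must trace the same orbit as $\omega(N(r))$, hence forcing $\omega_n^{l_n}\omega_0^0\notin I_{N(r)}$ and equating the two subpath sets---is exactly the non-trivial step the paper leaves to the reader, and your argument for it is sound.
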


\begin{proof}
Is a direct consequence of Theorem \ref{thm:main} and Remark \ref{romrelnw}.\,$iii)$.
\end{proof}

Under suitable conditions, Corollary \ref{cor:main} can be extended to a broader class of algebras, as shown in the result below.

\begin{corollary}\label{cor:conesp}
Let $A = \Bbbk Q/I$ be a special multiserial algebra, with \( R \) a minimal set of relations such that \( I = \langle R \rangle \). Suppose that every nonzero relation \(r = \sum_{j \in J} \lambda_j p_j \in R\), satisf\mbox ies the condition that for any pair of arrows \( \beta\),  \(\gamma\) and any \( j \in J \), both \(\beta p_j^0\) and \(p_j^{l_j} \gamma\) are in the ideal $I$. Then, 
\( A \) is a UMP algebra if and only if, for any zero relation \( r \in R \) with length greater than two, there exists a path \( u \) such that \( t(u) = s(u) \) and 
\begin{equation*}
    R \cap \{\text{subpaths of } u^{(p)} : p \in \mathbb{Z}^+ \} = \{r\}. 
\end{equation*}
\end{corollary}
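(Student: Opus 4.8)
The plan is to deduce Corollary \ref{cor:conesp} from Corollary \ref{cor:main} by showing that, under the stated hypothesis on the nonzero relations of $R$, the algebra $A$ is locally monomial and that the $\omega$-relations coincide with the zero relations in $R$ of length greater than two. These two facts are exactly what is packaged in Remark \ref{romrelnw}.$iv)$, so the first step is simply to invoke that remark: part $(a)$ gives $\omega(N(p_j))=p_j$ for every path $p_j$ appearing in a nonzero relation $r$, and this in turn forces $A$ to be locally monomial, which is the standing hypothesis needed to apply Theorem \ref{thm:main} and Corollary \ref{cor:main}.

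Once $A$ is known to be locally monomial, the next step is to identify the $\omega$-relations. Here I would again appeal to Remark \ref{romrelnw}.$iv)(b)$, which establishes that every element $p\in I$ can be written as a combination of conjugates of zero relations in $R$ together with nonzero relations in $R$, and concludes that a zero relation $r\in I$ is an $\omega$-relation if and only if $r$ is a zero relation in $R$ of length greater than two. This is the precise analogue, in the present broader setting, of the monomial statement in Remark \ref{romrelnw}.$iii)$ that Corollary \ref{cor:main} relies upon.

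With these two ingredients in hand, the remainder of the argument is a direct transcription of the proof of Corollary \ref{cor:main}. Namely, by Theorem \ref{thm:main}, $A$ is UMP if and only if for every $\omega$-relation $r$ the path $\omega(N(r))$ is cyclic and $R_{N(r)}\cap\{\text{subpaths of }\omega(N(r))^{(p)}:p\in\mathbb{Z}^+\}=\{r\}$. Using the identification of $\omega$-relations with the zero relations $r\in R$ of length greater than two, and taking $u:=\omega(N(r))$ (which is a cyclic path, so that $t(u)=s(u)$), one rewrites the condition of Theorem \ref{thm:main} in terms of $R$ rather than $R_{N(r)}$. Since $R_{N(r)}=R\cap\mathcal{P}(Q_{N(r)})$ and any subpath of a power of the cyclic path $u=\omega(N(r))$ automatically lies in $\mathcal{P}(Q_{N(r)})$, the intersection $R_{N(r)}\cap\{\text{subpaths of }u^{(p)}\}$ agrees with $R\cap\{\text{subpaths of }u^{(p)}\}$, yielding exactly the stated criterion.

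The only genuine obstacle is verifying that nothing is lost in passing from $R_{N(r)}$ to $R$ in the intersection condition, i.e. that restricting to the component does not spuriously exclude a relation of $R$ that divides a power of $u$. I expect this to be the main point requiring care, and it is resolved precisely because $u=\omega(N(r))$ is a path in the subquiver $Q_{N(r)}$, so any subpath of a power of $u$ is itself a path in $\mathcal{P}(Q_{N(r)})$; hence a relation in $R$ dividing such a power already belongs to $R_{N(r)}$. Everything else reduces to the verifications supplied by Remark \ref{romrelnw}.$iv)$ and a verbatim application of Corollary \ref{cor:main}.
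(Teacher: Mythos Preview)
Your proposal is correct and follows essentially the same route as the paper: the paper's proof is the single sentence ``Is a direct application of Theorem~\ref{thm:main} and Remark~\ref{romrelnw}.\,$iv)$,'' and what you have written is precisely an unpacking of that sentence, using part~$(a)$ of Remark~\ref{romrelnw}.\,$iv)$ to secure local monomiality and part~$(b)$ to identify the $\omega$-relations with the zero relations of $R$ of length greater than two, after which the criterion of Theorem~\ref{thm:main} (equivalently Corollary~\ref{cor:main}) applies verbatim. One small caution: your line ``$R_{N(r)}=R\cap\mathcal{P}(Q_{N(r)})$'' is only established in the paper for monomial $A$ (see the paragraph following Definition~\ref{def:locmon}); in the present locally monomial setting $R_N$ is a minimal set of zero relations generating $I_N$, and its coincidence with the zero relations of $R$ lying in $\mathcal{P}(Q_N)$ is not stated explicitly, though it is what Remark~\ref{romrelnw}.\,$iv)(b)$ is meant to deliver. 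This is a notational wrinkle rather than a gap in your argument.
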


\begin{proof}
Is a direct application of Theorem \ref{thm:main} and Remark \ref{romrelnw}.\,$iv)$.
\end{proof}

The following proposition provides tools for determining whether a specif\mbox ic algebra has the UMP property in concrete cases (see also Remark \ref{rmk:practice}).

\begin{proposition}\label{prop:practice}
Let \( A = \Bbbk Q / I \) be a special multiserial, locally monomial algebra, with $R$ being a minimal generating set of relations for $I$. Suppose there is a non-zero relation $\rho=\sum_{j \in J} \lambda_j p_j \in R$ such that for some $i\in J$, $p_i$ is not a power of a loop. If there exists an arrow $\beta$ satisfying either $\beta p_i^{0} \notin I$ or $p_i^{l_i} \beta \notin I$, then $A$ is not UMP.
\end{proposition}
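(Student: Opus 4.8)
The plan is to locate a single weakly connected component $N_0$ of $G_{Q,I}$ that fails the UMP criterion of Theorem \ref{thm:umpchar1}; by Corollary \ref{cor:local} this suffices to conclude that $A$ is not UMP. Throughout I treat the case $\beta p_i^0\notin I$, the case $p_i^{l_i}\beta\notin I$ being dual under reversal of all arrows. First I would record the standing features of a non-zero relation: since $\rho\in R$ and $R$ is minimal, the paths $p_j$ are pairwise parallel and each $p_j\notin I$ (a term lying in $I$ could be deleted from $\rho$, contradicting minimality). Having no subpath in $I$, each $p_j$ lies in a single component by Lemma \ref{lem:components}; and since every $I_N$ is monomial while $\rho$ is genuinely non-monomial, the $p_j$ cannot all lie in one component, so $\rho$ spans at least two components. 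Set $N_0:=N(p_i)$.

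The first real step is to show $\beta p_i\in I$. As $A$ is special multiserial and $\beta p_i^0\notin I$, the arrow $p_i^0$ is the unique arrow that may follow $\beta$ outside $I$, so $\beta p_j\in I$ for every $j$ with $p_j^0\neq p_i^0$. From $\beta\rho\in I$ we then obtain $\sum_{p_j^0=p_i^0}\lambda_j\,\beta p_j\in I$. Were $\beta p_i\notin I$, the arrow $\beta$ and all the paths $p_j$ with $p_j^0=p_i^0$ would lie in $\Bbbk Q_{N_0}$, so this sum would lie in the monomial ideal $I_{N_0}$, forcing each summand into $I$ and contradicting $\beta p_i\notin I$. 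Hence $\beta p_i\in I$. Choosing $t\geq1$ minimal with $r:=\beta p_i^0\cdots p_i^t\in I$, the minimality of $t$ together with $p_i\notin I$ shows that $r$ has no proper subpath in $I$; thus $r$ is an $\omega$-relation with $N(r)=N_0$ and $l_r=t+1>1$, and in particular $r\in\mathcal{S}_{N_0}$ with $\sigma=l_r>1$.

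It then remains to contradict the UMP criterion for $A_{N_0}$. By Theorem \ref{thm:umpchar1}, since $\sigma>1$, the algebra $A_{N_0}$ can be UMP only if $\mathcal{S}_{N_0}=\{r\}$ and $\omega(N_0)$ is cyclic. If $\omega(N_0)$ is not cyclic we are already done. If $\omega(N_0)$ is cyclic, then because $p_i$ is not a power of a loop the cyclic word $\omega(N_0)$ has period at least two, and I would use the identification forced by $\rho$ to exhibit a second relation $r'\in\mathcal{S}_{N_0}$ with $r'\neq r$: multiplying $\rho$ on the right by the arrow $g$ that continues the cycle at $t(p_i)$, every product $p_j g$ with $p_j$ outside $N_0$ spans two components and hence lies in $I$ by Lemma \ref{lem:components}, which drives a suitable turn of the cycle through $t(p_i)$ into $I$ and so yields a relation of $R_{N_0}$ distinct from $r$. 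With $|\mathcal{S}_{N_0}|\geq2$, Theorem \ref{thm:umpchar1} gives that $A_{N_0}$ is not UMP.

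The hard part is this last cyclic sub-case, namely manufacturing a genuinely new relation $r'\neq r$ out of the non-monomial relation $\rho$. The two delicate issues are ensuring the transported product is not merely a consequence of $\beta p_i\in I$ (so that it really contributes a new element of $\mathcal{S}_{N_0}$), and controlling those terms $p_j$ that happen to lie in the same cyclic component $N_0$ as $p_i$ and therefore differ from $p_i$ only by full turns of $\omega(N_0)$. It is precisely here that the hypothesis that $p_i$ is not a power of a loop, equivalently that $\omega(N_0)$ has period at least two, is essential, since in the single-loop situation the component can in fact be UMP and no such second relation exists.
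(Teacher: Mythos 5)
Your preliminary reductions are sound and in places more careful than the paper's own argument: the derivation of $\beta p_i\in I$ from $\beta\rho\in I$ together with monomiality of $I_{N(p_i)}$, the extraction of the $\omega$-relation $r=\beta p_i^0\cdots p_i^t$ with $t\geq 1$ minimal (so $r\in\mathcal{S}_{N_0}$ and $l_r>1$), and the appeal to Theorem \ref{thm:umpchar1} all work, and they correctly reduce everything to excluding the single configuration $\mathcal{S}_{N_0}=\{r\}$ with $\omega_n^{l_n}\omega_0^0\notin I_{N_0}$. (The paper instead asserts that $\beta p_1\in R_{N(p_1)}$, i.e.\ that $\beta p_1$ has no proper subpath in $I$, and then compares the maximal path $p_1$ with a maximal path ending in $p_1^{l_1-1}$; both of those assertions are left unjustified and amount to the same residual case you isolate.)

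The gap is that your ``hard part'' is not a proof, and the move you propose cannot close it. In the configuration $\mathcal{S}_{N_0}=\{r\}$, $\omega_n^{l_n}\omega_0^0\notin I_{N_0}$, Lemma \ref{lem:specialcase} and Theorem \ref{thm:maxN} force $\mathcal{M}_{N_0}$ to consist of the single path $\mathfrak{m}_0=vu\,r^0\cdots r^{l_r-1}$, and since no arrow can precede or follow $p_i$ outside $I$ one gets $p_i=\mathfrak{m}_0$: the path $p_i$ wraps once around the cycle and then repeats its first $t$ arrows. Consequently the arrow $g$ continuing the cycle at $t(p_i)$ is $p_i^t$, the minimal terminal subpath of $p_ig$ lying in $I$ is exactly $\beta p_i^0\cdots p_i^t=r$, and no second element of $\mathcal{S}_{N_0}$ appears --- this is precisely your ``delicate issue (a)'', and it is not a removable technicality. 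The configuration is realizable with every hypothesis of the proposition satisfied: take vertices $1,2,3$, arrows $a:1\to 2$, $b:2\to 1$, $c:1\to 3$, $d:3\to 2$, and $I=\langle aba-cd,\ bc,\ db,\ bab\rangle$ with $R$ these four relations. This $A$ is special multiserial and locally monomial, $\rho=aba-cd$ is a non-zero minimal relation, $p_i=aba$ is not a power of a loop, and $b\,p_i^0=ba\notin I$; yet $\mathcal{M}=\{aba+I\}=\{cd+I\}$ is a single maximal path, so $A$ is UMP. Thus the remaining case genuinely occurs and cannot be ruled out by any completion of your sketch (the paper's proof breaks at the same point: here $\beta p_1=baba$ properly contains $bab\in I$, so $\beta p_1\notin R_{N(p_1)}$, and no maximal path ends in $p_1^{l_1-1}=b$). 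Any repair would have to add a hypothesis excluding the situation where $p_i$ already contains a full turn of $\omega(N(p_i))$.
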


\begin{proof}
Without loss of generality, we can assume that $1\in J$, $i = 1$ and $\lambda_1 = 1$. Suppose there exists an arrow, such that \(\beta p_1^{0} \notin I\). Since \(A\) is a special multiserial algebra, we have that \(\beta p_j^{0} \in I\), for all \(j\neq 1\). Consequently, \(\beta p_1 \in I\), and because \(\rho \in R\), we get \(\beta p_1 \in R_{N(p_1)}\).\\
Since \(\rho\) is not monomial, it follows that \(\omega_\beta \neq \omega_{p_1^{0}}\), and both are vertices of \(N(p_1)\), joined by an arrow 
\(\omega_\beta \to \omega_{p_1^{0}}\). Furthermore, for any arrow \(\alpha \neq \beta\) we have \(\alpha p_1^{0} \in I\), which means there is no arrow  
\(\omega_\alpha \to \omega_{p_1^{0}}\) in \(N(p_1)\). Consequently, there exists a maximal path \(\mathfrak{m}\) whose f\mbox irst arrow is \(p_1^{0}\).\\ 
We will now prove that the last arrow of $\mathfrak{m}$ is $p_1^{l_1}$. If $p_1^{l_1}\gamma \in I$, it follows that $\omega_\gamma \notin N(p_1)$, and thus
$\mathfrak{m}$ cannot contain the arrow $\gamma$. On the other hand, if there exists an arrow \(\gamma\) such that \(p_1^{l_1}\gamma \notin I\), we can conclude, as shown previously, that \(p_1\gamma \in R_{N(p_1)}\). In either case, the last arrow of \(\mathfrak{m}\) must be \(p_1^{l_1}\), which allows us to conclude that \(\mathfrak{m} = p_1\). Def\mbox initively, we establish that if \(\beta p_1^{0} \notin I\), then \(\mathfrak{m} = p_1\).\\
Since \(A\) is locally monomial, its component \(A_{N(p_1)}\) is monomial. Now suppose \(\mathfrak{m}'\) is a maximal path whose last arrow is $p_1^{l_1-1}$. If \(\mathfrak{m}'+ I_{N(p_1)} = \mathfrak{m} + I_{N(p_1)}\), it follows that \(\mathfrak{m}' = \mathfrak{m}\). This would imply \(p_1^{j} = p_1^{j+1}\) for all \(0 \leq j \leq l_i - 1\), which contradicts our initial hypothesis that $p_1$ is not a power of a loop. This contradiction establishes that \(\mathfrak{m}\neq \mathfrak{m}'\), proving that \(A_{N(p_1)}\) is not UMP algebra. We thus conclude that \(A\) itself is not UMP, by Corollary \ref{cor:local}. The proof for the case where \(p_1^{l_1} \beta \notin I\) is analogous.
\end{proof}

\begin{remark}\label{rmk:practice}
Below, we illustrate how to determine if the UMP property holds by analyzing the minimal set of relations in practical cases. For a special multiserial and locally monomial path algebra $A=\Bbbk Q/I$, where $R$ is a minimal generating set of relations for $I$,  we proceed with the following steps:
\begin{itemize}
\item {\bf Step 1:} If $R$ also contains nonzero relations, and each of these satisf\mbox ies the condition in Remark~\ref{romrelnw}.\,iv), then Corollary~\ref{cor:conesp} applies.
\item{\bf Step 2:} If there is a nonzero relation in $R$ that does \emph{not} satisfy Remark~\ref{romrelnw}.\,iv), choose a summand path $p$ from that relation and takes an arrow $\beta\in Q_1$ such that $\beta p^0\notin I$ or $p^{l_p}\beta\notin I$.
\begin{enumerate}
\item If $p$ is not a power of a loop, Proposition~\ref{prop:practice} implies that $A$ is \emph{not} UMP.
\item If $p=c^{m}$ is a power of a loop $c$, the special multiserial property ensures that the maximal path containing $p$ is $c^{m+1}$. 
\end{enumerate}We then repeat the check for all remaining nonzero relations. If all of these relations satisfy Remark~\ref{romrelnw}.\,iv), we can apply Corollary~\ref{cor:conesp}.
\end{itemize}

\end{remark}

\begin{exam}\label{eowrn}

\begin{enumerate}[$i)$]

Consider the algebra $A$ in Example \ref{exgdr}.$i)$. For the $\omega$-relation $r=abc$ in Example \ref{exlmsmtos}.$i)$,  we have that $\omega(N(r))=dabce$ which is not cyclic. Then, $A$ is not UMP.

\item Consider the algebra $A$ in Example \ref{exgdr}.$ii)$. We obtain that the unique $\omega$-relations of $A$ are $r_1:=a^3$ and $r_2:=b^3$ for which the paths $\omega(N(r_1))=\omega_a=a$ and $\omega(N(r_2))=\omega_b=b$ are cyclic. Moreover, we have that $R_{N(r_1)}=\{a^3\}$ and $R_{N(r_2)}=\{b^3\}$. From Theorem \ref{thm:main}, we have that $A$ is UMP.

\item Consider the algebra $A$ in Example \ref{exgdr}.$iii)$. For the $\omega$-relation $r=abcda$ in Example \ref{exlmsmtos}.$ii)$, we have that $\omega(N(r))=abcd$ which is a cyclic path, but the $\omega$-relation $dabcd$ is another relation which is a subpath of a power of $\omega(N(r))$. Then, $A$ is not UMP.

\end{enumerate}

\end{exam}

\section{Final comments}
\subsection{Brauer graph algebras}
In this subsection, we apply the tools developed in this paper for the case of symmetric special biserial algebras. It is known that these algebras are of tame representation type and they can be obtained as Brauer graph algebras. In the following theorems we describe the form of every weakly connected component of $G_{Q,I}$ for a symmetric special biserial algebra $A=\Bbbk Q/I$. Moreover, we determine which of these algebras are also UMP algebras. This characterization is given in terms of their bound quiver and their associated Brauer graph.

To this end, we f\mbox irst present some preliminary def\mbox initions on Brauer graph algebras. See \cite{D} and \cite{S} for more details.

A Brauer graph is a quadruple $\mathcal{G}=(V,E,m,\mathfrak{o})$ consisting of the following data:
\begin{enumerate}
    \item The pair $(V,E)$ is a f\mbox inite and connected graph (loops and multiple edges are allowed).
For each $v\in V$, we def\mbox ine the \textit{valency} of $v$, denoted by $val(v)$, as the number of \textit{half-edges} incident to $v$.
    \item A function $m:V\rightarrow \mathbb{Z}^+$ called \textit{multiplicity function}. We say that a vertex $v\in V$ is \textit{truncated} if $val(v)m(v)=1$. We denote by $V^{\ast}$ the set of non-truncated vertices of $\mathcal{G}$.
    \item A function $\mathfrak{o}$, called \textit{orientation}, that assigns to each non-truncated vertex $v\in V^{\ast}$ a cyclic order on all the edges incident to $v$ such that, if $val(v)=1$, then $\mathfrak{o}(v)$ is given by $i<i$, where $i$ is the unique edge incident to $v$. Thus, for every non-truncated vertex $v\in V$, the orientation $\mathfrak{o}(v)$ can be written as $i_0<i_1<\cdots<i_{val(v)-1}<i_0$, where $i_0,i_1,\cdots,i_{val(v)-1}$ are all the edges incident to $v$. In this case, we say that $i_{k+1}$ is a \textit{successor} of $i_k$ if $0\leq k<val(v)-1$ and $i_0$ is the successor of $i_{val(v)-1}$.
\end{enumerate}

We represent any Brauer graph $\mathcal{G}$ into an oriented plane such that $\mathfrak{o}$ is given by a counterclockwise orientation. For each $v\in V^{\ast}$ and each edge $i$ incident to $v$, the cyclic order $\mathfrak{o}$ has the form $i_0:=i<i_1<\cdots<i_{val(v)-1}<i_0$. We call to $i_0,\ldots,i_{val(v)-1}$ a \textit{successor sequence} of $v$ starting at $i$. An edge can be appear twice in the cyclic order of a vertex $v$ and hence, in general, there is not a unique successor sequence of $v$ starting at $i$. For this reason, we must consider half-edges. To f\mbox ix notation, we write $\widehat{i}$ and $\widetilde{i}$ to denote the half-edges corresponding to the edge $i$. If $i\in E$ is not a loop, then we just write $\widehat{i}=\widetilde{i}=i$. Observe that every pair consisting of a vertex $v$ and a half-edge $\widehat{i}$ of $i\in E$ incident to $v$ determines a unique successor sequence of $v$ starting at $i$. We denote by $\zeta(v,\widehat{i})$ to such successor sequence.

We def\mbox ine the Brauer graph algebra associated to $\mathcal{G}$ as the bound quiver algebra $A_{\mathcal{G}}:=\Bbbk Q_{\mathcal{G}}/I_{\mathcal{G}}$, where the quiver $Q_{\mathcal{G}}=(Q_0,Q_1,s,t)$ is given as follows:
\begin{itemize}
    \item $Q_0=E$.
    \item Let $i,j\in Q_0$. We put an arrow $\alpha\in Q_1$ from $i$ to $j$ provided that $i$ and $j$ are incident to a common non-truncated vertex $v\in V$ and $j$ is a successor of $i$ in the cyclic order of $v$. Notice that if $v$ is a non-truncated vertex and $i$ is an incident edge to $v$, then, for each half-edge $\widehat{i}$ of $i$, the successor sequence $\zeta(v,\widehat{i})$ induces a cyclic path (starting at $i$) $\alpha_0\cdots \alpha_{val(v)-1}$ in $\mathcal{P}(Q_{\mathcal{G}})$, with $\alpha_k\in Q_1$ for $0\leq k\leq val(v)-1$. This path is called a \textit{special $v$-cycle} and is denoted by $A_{v,\widehat{i}}$.
    \item The ideal $I$ of the path algebra $\Bbbk Q_{\mathcal{G}}$ is generated by the relations of type I, II and III def\mbox ined as follows.
    \begin{enumerate}
        \item[{\bf Type I:}] $A_{v,\widehat{i}}^{(m(v))}-A_{v',\widetilde{i}}^{(m(v'))}$, where $v$ and $v'$ are non-truncated vertices and 
        $i$ is an edge linking $v$ with $v'$ and the half-edges $\widehat{i}$ and $\widetilde{i}$ are incident to $v$ and $v'$, respectively.
        \item[{\bf Type II:}] $A_{v,\widehat{i}}^{(m(v))}A_{v,\widehat{i}}^0$, where $v$ is a non-truncated vertex, $i$ is an edge incident to $v$ and $\widehat{i}$ is a half-edge of $i$. Also, $A_{v,\widehat{i}}^0$ denotes the f\mbox irst arrow of $A_{v,\widehat{i}}$.
        \item[{\bf Type III:}] $\alpha \beta$, where $\alpha, \beta \in Q_1$, $\alpha \beta$ is not a subpath of any special cycle, except if $\alpha=\beta$ is a loop corresponding to a non-truncated vertex $v\in V$ such that $val(v)=1$.
    \end{enumerate}
\end{itemize}

\begin{theorem}\label{thm:Bralg}
Let $\mathcal{G}$ a Brauer graph, $A_{\mathcal{G}}=\Bbbk Q_{\mathcal{G}}/I_{\mathcal{G}}$ its corresponding Brauer graph algebra and $G_{Q_{\mathcal{G}},I_{\mathcal{G}}}$ the ramif\mbox ications graph of $A_{\mathcal{G}}$. Then, the weakly connected components of $G_{ Q_{\mathcal{G}},I_{\mathcal{G}}}$ are in bijection with the non-truncated vertices of $\mathcal{G}$. If $v$ is the non-truncated vertex corresponding to a component $N\in \mathcal{D}_{Q_{\mathcal{G}},I_{\mathcal{G}}}$, then $\omega(N)=A_{v,\widehat{i}}$ for some $i\in E$ incident to $v$. Moreover, each one of the weakly connected components of $G_{ Q_{\mathcal{G}},I_{\mathcal{G}}}$ is just a vertex or is of the form
$$\xymatrix{
\bullet \ar[r] & \bullet  & \cdots & \ar[r]
& \bullet \ar@/^2pc/[llll]}$$
\vspace{3mm}
\end{theorem}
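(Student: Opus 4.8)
The plan is to establish the bijection between weakly connected components and non-truncated vertices by analyzing the structure of the special cycles that generate the relations of a Brauer graph algebra. First I would recall that, by Lemma~\ref{lem:compchar}, every weakly connected component $N$ of $G_{Q_{\mathcal{G}},I_{\mathcal{G}}}$ is either a single vertex or a cyclic configuration, and that $\omega(N)=\omega_0\cdots\omega_n$ is determined by the paths $\omega_a$ associated to the arrows in $N$. The key observation is that, by construction, the arrows of $Q_{\mathcal{G}}$ incident to a given edge $i\in E=Q_0$ arise from successor relations at the non-truncated vertices of $\mathcal{G}$, and the Type~III relations force $\alpha\beta\notin I_{\mathcal{G}}$ precisely when $\alpha\beta$ is a subpath of some special cycle. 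This means that the ``no ramification'' condition defining $\omega_a$ aligns exactly with following the successor sequence around a single non-truncated vertex.

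\textbf{Step 1: Identifying $\omega_a$ with arcs of special cycles.} I would fix an arrow $a\in (Q_{\mathcal{G}})_1$ and show that $\omega_a$ is a maximal repetition-free subpath of a special cycle $A_{v,\widehat{i}}$. Each arrow $a$ sits inside a unique special $v$-cycle (up to the half-edge data), since $\alpha\beta\notin I_{\mathcal{G}}$ forces consecutive arrows to belong to the same special cycle by the Type~III relations. Following Definition~\ref{dwau}, $\omega_a$ extends $a$ in both directions as long as there is a unique arrow entering and leaving each intermediate vertex; by the special multiserial nature of $A_{\mathcal{G}}$, this extension proceeds exactly along the special cycle until it reaches a vertex where a ramification occurs, i.e.\ an edge $i$ incident to \emph{two} distinct non-truncated vertices. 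Hence $\omega_a$ is precisely the arc of $A_{v,\widehat{i}}$ starting and ending at such ramification edges.

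\textbf{Step 2: Matching edges of $G_{Q_{\mathcal{G}},I_{\mathcal{G}}}$ to the special cycle.} I would then verify that a directed edge $\omega_a\to\omega_b$ exists in $G_{Q_{\mathcal{G}},I_{\mathcal{G}}}$ if and only if $\omega_a^{l_a}\omega_b^0$ is a subpath of the same special cycle $A_{v,\widehat{i}}$, using Definition~\ref{def:ramgraph} together with the fact that $\omega_a^{l_a}\omega_b^0\notin I_{\mathcal{G}}$ exactly when this product appears inside a special cycle (Type~III). Concatenating all such arcs reconstructs the full special cycle, so $\omega(N)=\omega_0\cdots\omega_n$ traverses $A_{v,\widehat{i}}$ exactly once around $v$, giving $\omega(N)=A_{v,\widehat{i}}$. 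This simultaneously shows that each component $N$ is associated to a \emph{unique} non-truncated vertex $v$ (the one whose successor sequence generates the special cycle containing all arrows of $N$) and that the correspondence $N\mapsto v$ is well-defined.

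\textbf{Step 3: Bijectivity and the cyclic form.} To finish, I would check surjectivity and injectivity of the correspondence. Surjectivity follows because every non-truncated vertex $v$ gives rise to at least one special cycle, hence at least one component. Injectivity follows because two distinct non-truncated vertices yield special cycles whose arrow sets are disjoint (each arrow arises from a single successor relation at a single vertex), so they cannot lie in a common component. For the cyclic shape, I would invoke Lemma~\ref{lem:compchar}: since $\omega(N)=A_{v,\widehat{i}}$ is a special cycle and the Type~II relation $A_{v,\widehat{i}}^{(m(v))}A_{v,\widehat{i}}^0$ ensures $\omega_n^{l_n}\omega_0^0\notin I_{\mathcal{G}}$ (the wrap-around product lies inside a power of the special cycle), the component closes up into the displayed cyclic form rather than remaining a linear chain. \textbf{The main obstacle} I anticipate is the careful half-edge bookkeeping in Step~1: when $i$ is a loop at $v$ or when $\mathrm{val}(v)=1$, the special cycle can revisit edges, and I must ensure that the repetition-free condition in Definition~\ref{dwau}$.ii)$ and the exceptional clause in the Type~III relations are handled correctly so that $\omega_a$ does not spuriously terminate or overextend. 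Verifying that the truncated vertices contribute no arrows—and hence no components—will also require attention, since those are precisely the edges excluded from generating successor arrows.
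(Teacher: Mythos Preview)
Your plan is sound and lands on the same bijection as the paper, but the paper takes a slightly more direct route: rather than first identifying each $\omega_a$ as an arc of a special cycle and then reassembling, it argues in one step that for any arrow $\alpha$ the entire special cycle $A_{v,\widehat{s(\alpha)}}$ is a path in $Q_{N(\alpha)}$ containing every arrow of that subquiver (because $A_{v,\widehat{s(\alpha)}}^{(m(v))}+I_{\mathcal{G}}$ is a maximal path, so no length-two subpath of any power of the special cycle lies in $I_{\mathcal{G}}$), whence $\omega(N(\alpha))=A_{v,\widehat{s(\alpha)}}$ up to rotation. It then writes down explicit mutually inverse maps $\psi:\mathcal{D}_{Q_{\mathcal{G}},I_{\mathcal{G}}}\to V^{\ast}$ and $\varphi:V^{\ast}\to\mathcal{D}_{Q_{\mathcal{G}},I_{\mathcal{G}}}$. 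Your arc-by-arc decomposition is more granular and arguably more transparent about the structure of $G_{Q_{\mathcal{G}},I_{\mathcal{G}}}$ itself; the paper's shortcut avoids the half-edge bookkeeping you flag as the main obstacle.

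Two points of imprecision to fix. In Step~1, the construction of $\omega_a$ in Definition~\ref{dwau} depends only on the quiver $Q_{\mathcal{G}}$, not on $I_{\mathcal{G}}$, so the phrase ``by the special multiserial nature of $A_{\mathcal{G}}$'' is misplaced; what halts the extension is the purely combinatorial fact that an edge $i\in E$ with two non-truncated endpoints (or a loop) has two incoming and two outgoing arrows in $Q_{\mathcal{G}}$. In Step~3, the Type~II relation $A_{v,\widehat{i}}^{(m(v))}A_{v,\widehat{i}}^0$ is an element \emph{of} $I_{\mathcal{G}}$, so it cannot ``ensure'' that $\omega_n^{l_n}\omega_0^0\notin I_{\mathcal{G}}$; the correct reason is already in your parenthetical---$\omega_n^{l_n}\omega_0^0$ is a length-two subpath of the nonzero path $A_{v,\widehat{i}}^{(m(v))}$, hence is itself nonzero. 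With these two adjustments your argument is complete.
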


\begin{proof} We can assume that $V^{\ast}\neq \emptyset$. First, we will prove that if $\alpha\in Q_1$, then there exists $v\in V^{\ast}$ such that $\omega(N(\alpha))$ or a rotation of $\omega(N(\alpha))$ equals $A_{v,\widehat{s(\alpha)}}$. By the def\mbox inition of $A_{\mathcal{G}}$, there is a non-truncated vertex $v$ of $\mathcal{G}$ for which $s(\alpha),t(\alpha)\in E$ and $t(\alpha)$ is successor of $s(\alpha)$. Hence, we have a special $v$-cycle at $s(\alpha)$ of the form $A_{v,\widehat{s(\alpha)}}=\alpha_0\cdots \alpha_{val(v)-1}$ such that $\alpha_0=\alpha$, for some half-edge $\widehat{s(\alpha)}$ of $s(\alpha)$. Now, note that the length of the path $(A_{v,\widehat{s(\alpha)}})^{(m(v))}$ is given by the positive integer $m(v)val(v)$ which is greater than $1$. Therefore, since $A_{v,\widehat{s(\alpha)}}^{(m(v))}+I_{\mathcal{G}}$ is a maximal path, all the subpaths of length two of any power of $A_{v,\widehat{s(\alpha)}}$ do not belong to $I_{\mathcal{G}}$. Thus, by the def\mbox inition of the relations of type III, these are all the paths of length two in $\mathcal{P}(Q_{\mathcal{G}})$ that do not belong to $I_{\mathcal{G}}$. This implies that $A_{v,\widehat{s(\alpha)}}$ is a path in $A_{N(\alpha)}$ that contains all the arrows of $Q_{N(\alpha)}$ and, consequently, either $A_{v,\widehat{s(\alpha)}}$ coincides with $\omega(N(\alpha))$ or is a rotation of $\omega(N(\alpha))$.

We def\mbox ine the map $\psi:\mathcal{D}_{Q_{\mathcal{G}},I_{\mathcal{G}}}\rightarrow V^{\ast}$ as follows: for each $N\in \mathcal{D}_{Q_{\mathcal{G}},I_{\mathcal{G}}}$, def\mbox ine $\psi(N)$ as the non-truncated vertex of $\mathcal{G}$  for which $t(\omega(N)^0)$ is the successor of $s(\omega(N)^0)$ in its corresponding cyclic order. The map $\psi$ is independent of the choice of $\omega(N)$ because, by the previous reasoning, every rotation of $\omega(N)$ is also a special $\psi(N)$-cycle, and hence its f\mbox irst arrow corresponds to a successor relation of the same successor sequence as for $\omega(N)$.

On the other hand, we def\mbox ine the map $\varphi:V^{\ast}\rightarrow \mathcal{D}_{Q_{\mathcal{G}},I_{\mathcal{G}}}$ in the following fashion. Let $v\in V^{\ast}$, $i\in E$ and $\widehat{i}$ a half-edge of $i$ incident to $v$. Def\mbox ine $\varphi(v)$ as the component in $\mathcal{D}_{Q_{\mathcal{G}},I_{\mathcal{G}}}$ such that $A_{v,\widehat{i}}^{(\mathfrak{m}(v))}\in \mathcal{P}(Q_{\varphi(v)})$. We can do this because $A_{v,i}^{(\mathfrak{m}(v))}$ is a non-zero path in $A$ of lenght $m(v)val(v)>1$ (see Remark \ref{rem:unicomp}.$i)$). The map $\varphi$ is well-def\mbox ined because if instead taking $A_{v,\widehat{i}}$, we choose $A_{v,\widetilde{j}}$ for a half-edge $\widetilde{j}$ of an edge $j\in E$ with $\widetilde{j} \neq \widehat{i}$, then $A_{v,\widetilde{j}}$ is a rotation of $A_{v,\widehat{i}}$ and hence $A_{v,\widehat{i}}^{(\mathfrak{m}(v))}$ and $A_{v,\widetilde{j}}^{(\mathfrak{m}(v))}$ are paths in the quiver of the same weakly connected component.

It is easy to check that $\psi$ and $\varphi$ are inverses each other.

Finally, given $N\in \mathcal{D}_{Q_{\mathcal{G}},I_{\mathcal{G}}}$, since any power of $A_{\psi(N),\widehat{s(\omega(N))}}$ contains no zero relations of length two, it also follows that $\omega(N)^{l_{\omega(N)}}\omega(N)^0 \notin I_{\mathcal{G}}$. Since $A_{\mathcal{G}}$ is a special multiserial algebra, if $N$ is not just a vertex, the last condition and Lemma \ref{lem:compchar} imply that the weakly connected component $N$ is a cycle.

\end{proof}

\begin{remark}
Based on the proof of Theorem \ref{thm:Bralg}, the properties of the vertices in a Brauer graph determine whether a component of its algebra is a single vertex or a cyclic component. A vertex with a loop always results in a cyclic component. For a non-truncated vertex without loops, the associated component is reduced to a single vertex if and only if it is adjacent to at most one other non-truncated vertex. Furthermore, this component is cyclic if and only if the vertex is adjacent to at least two distinct non-truncated vertices. To prove these last cases, we f\mbox irst observe that each non-truncated vertex \(v\) of \(\mathcal{G}\) without loops determines a sequence of successors
 \[
i= i_0 < i_1 < \cdots < i_{{val}(v)-1},
 \]
 which is associated with a special $v$-cycle in \(Q_\mathcal{G}\). If $v$ is the only non-truncated vertex of $\mathcal{G}$, then $Q_{\mathcal{G}}$ is a cycle quiver and hence we have an only vertex in $G_{Q_{\mathcal{G}},I_{\mathcal{G}}}$.\\If \(v\) is adjacent to exactly one non-truncated vertex \(u\), then the cycle \(A_{v,i}\) shares only the vertex $i$ with \(A_{u,i}\). In this case, it is clear that \(\omega_{\alpha_0} = A_{v,i}\).\\
 Now, if a vertex \(v\) is adjacent to at least two non-truncated vertices, say \(j\) of them, then the cycle \(A_{v,i}\) will have \(j\) vertices that also belong to another cycle. \\
 Let \(u\) and \(u'\) be non-truncated vertices. Suppose the subsequence
 \[
 i_{l_u} < i_{l_u+1} < \cdots < i_{l_{u'}}
\]
 of \(i_0 < i_1 < \cdots < i_{{val}(v)-1}\) contains no \(i_l\) corresponding to an edge joining \(v\) to a non-truncated vertex other than \(u\) and \(u'\). In this case, we have a path
 \[
 \omega_{\alpha_{l_u}} = \alpha_{l_u} \cdots \alpha_{l_{u'}-1},
 \]
where \(\alpha_l : i_l \to i_{l+1}\). This construction shows that the weakly connected component is a cycle with $j$ vertices. Now, suppose that the vertex $v$ has a loop, denoted by $i$.  The corresponding vertex $i$ in the quiver $Q_\mathcal{G}$ has two incoming and two outgoing arrows. In general, for any two distinct arrows $\alpha, \beta \in (Q_\mathcal{G})_1$ with $s(\alpha) = i = s(\beta)$, we have $\omega_\alpha \neq \omega_\beta$. Therefore, this component is also a cycle.
\end{remark}

Using the bijection between $\mathcal{D}_{Q_{\mathcal{G}},I_{\mathcal{G}}}$ and $V^*$, we obtain a formula for $\dim_{\Bbbk}((A_{\mathcal{G}})_N)$ for any weakly connected component $N$ of $G_{Q_{\mathcal{G}},I_{\mathcal{G}}}$ in terms of the data given by the Brauer graph $\mathcal{G}$.

\begin{theorem}\label{thm:trunc}
Let $N\in \mathcal{D}_{Q_{\mathcal{G}},I_{\mathcal{G}}}$. Then, $\dim_{\Bbbk}((A_{\mathcal{G}})_N)=val(v)(val(v)m(v)+1)$, where $v$ is the unique non-truncated vertex in $\mathcal{G}$ for which $\omega(N)$ has the form $A_{v,\,\widehat{i}}\,.$
\end{theorem}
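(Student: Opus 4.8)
The plan is to turn the statement into a counting problem for the monomial algebra $(A_{\mathcal{G}})_N=\Bbbk Q_N/I_N$. First I would invoke Theorem \ref{thm:Bralg} to fix the non-truncated vertex $v\in V^{\ast}$ attached to $N$, together with an incident edge $i$, so that $\omega(N)=A_{v,\widehat{i}}$ is a special $v$-cycle. By the construction of $Q_{\mathcal{G}}$ this cycle has the form $A_{v,\widehat{i}}=\alpha_0\cdots\alpha_{val(v)-1}$, a repetition-free cyclic path of length $val(v)$, and $Q_N$ is precisely the cyclic subquiver it traverses; in particular $Q_N$ is a cyclic quiver whose arrows are the $val(v)$ successor arrows at $v$. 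Since $A_{\mathcal{G}}$ is locally monomial (Definition \ref{def:locmon}), $\dim_{\Bbbk}((A_{\mathcal{G}})_N)$ equals the number of paths of $Q_N$ that do not lie in $I_N$, so the theorem reduces to counting the nonzero paths.

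Next I would identify exactly which paths are nonzero. Because $Q_N$ is a cyclic quiver, every non-trivial path is a subpath of a power of $\omega(N)$ (Remark \ref{rem:paths1}), and by Theorem \ref{thm:omegaN} such a subpath remains nonzero up to the length at which a relation first appears. The decisive fact, already isolated inside the proof of Theorem \ref{thm:Bralg}, is that the maximal path through $N$ is $\omega(N)^{(m(v))}=A_{v,\widehat{i}}^{(m(v))}$: its length is $val(v)\,m(v)$, all of its length-two subpaths avoid $I_{\mathcal{G}}$, and appending one further arrow produces a relation of Type II. Hence, starting from any vertex of $Q_N$, there is exactly one nonzero path of each length $0,1,\ldots,val(v)m(v)$, and none of greater length.

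With this in hand the count is immediate: $Q_N$ has $val(v)$ vertices, and each contributes the trivial path together with one nonzero path of each positive length up to $val(v)m(v)$, that is $val(v)m(v)$ paths. Summing over the vertices gives
\begin{equation*}
\dim_{\Bbbk}((A_{\mathcal{G}})_N)=val(v)\bigl(1+val(v)m(v)\bigr)=val(v)\bigl(val(v)m(v)+1\bigr),
\end{equation*}
as claimed. Equivalently, one may phrase the conclusion by noting that $(A_{\mathcal{G}})_N$ is a self-injective Nakayama algebra with $val(v)$ simple modules, each indecomposable projective being uniserial of Loewy length $val(v)m(v)+1$.

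The step I expect to be the main obstacle is the precise bookkeeping of $Q_N$ as a length-$val(v)$ cycle together with the verification that the maximal nonzero path length is exactly $val(v)m(v)$. Concretely, one must confirm that the $val(v)$ successor arrows at $v$ really assemble into a single cycle and that the only relations meeting $Q_N$ are the Type II ones, which cut powers of $\omega(N)$ off at length $val(v)m(v)+1$: Type III relations never occur between consecutive arrows of a special cycle, and each Type I relation has a summand lying outside $\Bbbk Q_N$, so it does not shorten the nonzero paths inside $(A_{\mathcal{G}})_N$. The delicate bookkeeping arises for vertices incident to $v$ through two half-edges, where the half-edge formalism underlying Theorem \ref{thm:Bralg} must be used to guarantee that the special cycle is still traversed by exactly $val(v)$ successor arrows before the count is carried out.
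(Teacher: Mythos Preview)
Your overall strategy---identify $\omega(N)$ with a special $v$-cycle via Theorem \ref{thm:Bralg}, show that the longest nonzero path in $(A_{\mathcal{G}})_N$ has length $val(v)m(v)$, and then count $val(v)$ nonzero paths of each length from $0$ to $val(v)m(v)$---is exactly the paper's approach, and in the loop-free case your argument and the paper's coincide.

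The gap is your blanket assertion that $A_{\mathcal{G}}$ is locally monomial. Lemma \ref{lem:BGALM} (proved immediately after this theorem) shows this holds if and only if $\mathcal{G}$ has no loops, so it cannot be invoked unconditionally. Two of your supporting claims break down for the same reason when $v$ carries a loop $i$: the edge $i$ becomes a single vertex of $Q_N$ with two outgoing and two incoming arrows, so $Q_N$ is \emph{not} a cyclic quiver on $val(v)$ vertices; and the Type I relation $A_{v,\widehat{i}}^{(m(v))}-A_{v,\widetilde{i}}^{(m(v))}$ has \emph{both} summands in $\Bbbk Q_N$, contrary to what you state. The paper's proof sidesteps the first point by indexing the count not by vertices of $Q_N$ but by \emph{half-edges} incident to $v$---there are always $val(v)$ of these, loops or not---and it does not appeal to local monomiality at all; it simply asserts that the listed paths form a basis of $(A_{\mathcal{G}})_N$. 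Your final paragraph acknowledges that the half-edge bookkeeping is the delicate step, but the body of the argument has already committed to the ``$val(v)$ vertices'' and ``Type I summand outside $\Bbbk Q_N$'' claims, which are what fail.
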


\begin{proof}
Let $N\in \mathcal{D}_{Q_{\mathcal{G}},I_{\mathcal{G}}}$ and let $v$ be the non-truncated vertex in $\mathcal{G}$ for which $\omega(N)$ has the form $A_{v,\,\widehat{i}}\,.$ Note that every maximal path in $(A_{\mathcal{G}})_N$ has the form $A_{v,\,\widehat{i}}^{(m(v))}+(I_{\mathcal{G}})_N$ and its representant $A_{v,\widehat{i}}^{(m(v))}$ has length $val(v)m(v)$. Hence, the length of every path in $(A_{\mathcal{G}})_N$ is between $0$ and $val(v)m(v)$.
Moreover, for each integer $l$, with $0\leq l\leq val(v)m(v)$, and for each half-edge $\widehat{i}$ incident to $v$, there exists a unique path of length $l$ in $(A_{\mathcal{G}})_N$, with source $i$, following the cyclic order given by $v$ and starting at $\widehat{i}$. This implies that the number of paths in $(A_{\mathcal{G}})_N$ of length $l$ is $val(v)$. Thus,
$$
\dim_{\Bbbk}((A_{\mathcal{G}})_N)=|\mathcal{P}(Q_N,I_N)|=\sum_{l=0}^{val(v)m(v)}val(v)=val(v)(val(v)m(v)+1).
$$
\end{proof}

\begin{lemma}\label{lem:BGALM}
Let $\mathcal{G}=(V,E,m,\mathfrak{o})$ be a Brauer graph and $A_{\mathcal{G}}=\Bbbk Q_{\mathcal{G}}/I_{\mathcal{G}}$ its corresponding Brauer graph algebra. Then, $A_{\mathcal{G}}$ is locally monomial if and only if $\mathcal{G}$ has no loops.
\end{lemma}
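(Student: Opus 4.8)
The plan is to reduce the statement to a dichotomy about the Type I relations, which are the only non-monomial generators of $I_{\mathcal{G}}$, and to read off locally monomiality from whether such a relation can be confined to a single component $Q_N$. Recall that $A_{\mathcal{G}}$ is locally monomial exactly when each $I_N=I_{\mathcal{G}}\cap\Bbbk Q_N$ is monomial, and that an ideal of a path algebra is monomial if and only if whenever a linear combination of paths lies in it, each path occurring with nonzero coefficient already lies in it. The relations of Type II and Type III are paths, so they never obstruct monomiality; the only possible obstruction is a Type I relation $A_{v,\widehat{i}}^{(m(v))}-A_{v',\widetilde{i}}^{(m(v'))}$. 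By Theorem \ref{thm:Bralg}, the components of $G_{Q_{\mathcal{G}},I_{\mathcal{G}}}$ correspond to non-truncated vertices, with $\omega(N)=A_{v,\widehat{i}}$; so such a relation lies inside a single $Q_N$ precisely when $v=v'$, i.e.\ precisely when $i$ is a loop.

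For the forward implication I would argue the contrapositive. Suppose $i$ is a loop at a vertex $v$. A loop contributes two half-edges, so $\mathrm{val}(v)\geq 2$ and $v$ is non-truncated; hence the Type I relation $A_{v,\widehat{i}}^{(m(v))}-A_{v,\widetilde{i}}^{(m(v))}$ is a generator of $I_{\mathcal{G}}$. Writing $N$ for the component attached to $v$, both summands are powers of rotations of $\omega(N)=A_{v,\widehat{i}}$, hence both lie in $\Bbbk Q_N$, so this relation lies in $I_N$. The two half-edges $\widehat{i}$ and $\widetilde{i}$ sit at distinct positions of the cyclic order $\mathfrak{o}(v)$, so the special $v$-cycles $A_{v,\widehat{i}}$ and $A_{v,\widetilde{i}}$ begin with different arrows and are therefore distinct paths from $i$ to $i$; consequently $A_{v,\widehat{i}}^{(m(v))}$ and $A_{v,\widetilde{i}}^{(m(v))}$ are distinct maximal (in particular nonzero) paths. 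Thus $I_N$ contains a genuinely non-monomial element whose two summands are not in $I_N$, so $A_N$ is not monomial and $A_{\mathcal{G}}$ is not locally monomial.

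For the converse, suppose $\mathcal{G}$ has no loops, so every edge joins two distinct vertices and every Type I relation equates the maximal path of one component with the maximal path of a different component. I would then invoke the explicit path basis of a Brauer graph algebra: the only linear identifications among paths come from the Type I relations, and in the loop-free case each of these identifies two paths lying in \emph{distinct} components. Hence, inside a fixed $Q_N$, no two distinct nonzero paths become equal in $A_{\mathcal{G}}$, i.e.\ the classes of the nonzero paths of $Q_N$ are linearly independent there. Consequently, if $x=\sum_p\lambda_p\,p\in I_N$ with the $p$ paths of $Q_N$, then the nonzero-in-$A_{\mathcal{G}}$ among them must all carry coefficient $0$, forcing every $p$ with $\lambda_p\neq 0$ to vanish in $A_{\mathcal{G}}$, that is, $p\in I_N$. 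Therefore $I_N$ is monomial for every $N$, and $A_{\mathcal{G}}$ is locally monomial.

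The delicate step is the converse: establishing that intersecting $I_{\mathcal{G}}$ with $\Bbbk Q_N$ cannot smuggle in a hidden non-monomial relation. This rests on the structural fact that Type I relations are the sole source of linear dependence among paths in $A_{\mathcal{G}}$ and that, without loops, they never identify two paths of the same component; making this fully rigorous amounts to exhibiting the standard path basis of $A_{\mathcal{G}}$ and tracking which basis elements belong to each $Q_N$. Everything else is bookkeeping with the definitions of the three relation types and Theorem \ref{thm:Bralg}.
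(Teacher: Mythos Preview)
Your proposal is correct and shares the paper's central insight: the Type I relations are the only non-monomial generators, and in the loop-free case the two summands of any Type I relation lie in distinct components by Theorem~\ref{thm:Bralg}. The forward direction is essentially identical to the paper's.

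For the converse, however, you take a different route. The paper argues directly that $I_N$ admits a monomial generating set: since any Type I relation $A_{v,i}^{(m(v))}-A_{w,i}^{(m(w))}$ has its two summands in distinct $Q_N$'s (because $v\neq w$ when $i$ is not a loop), no Type I generator lies in $\Bbbk Q_N$; and since the summands are maximal paths, any nontrivial path-multiple of such a relation is already absorbed by Type II/III relations. Hence $I_N$ is generated by the Type II and Type III relations in $\Bbbk Q_N$, which are paths. Your argument instead appeals to the standard path basis of $A_{\mathcal{G}}$: because the only linear dependences among nonzero paths come from Type I identifications, and these never involve two paths of the same $Q_N$ in the loop-free case, the nonzero paths of $Q_N$ are linearly independent in $A_{\mathcal{G}}$; from this you read off monomiality of $I_N$. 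Both are valid. The paper's approach is more self-contained and produces an explicit generating set for $I_N$, while yours is shorter but imports the basis theorem for Brauer graph algebras as a black box; you correctly flag that making the linear-independence step rigorous requires exactly that basis description.
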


\begin{proof}
First, suppose that there are a vertex $v\in V$ and a loop $i\in E$ such that $i$ is incident to $v$. We will prove that $A_{\mathcal{G}}$ is not locally monomial. Recall that we denote by $\widehat{i}$ and $\widetilde{i}$ the half-edges corresponding to $i$. Then, $A_{v,\widehat{i}}$ and $A_{v,\widetilde{i}}$ are two dif\mbox ferent non-zero paths which are not disjoint. Applying Remark \ref{rem:unicomp}.$i)$, we obtain that $N(A_{v,\widehat{i}})=N(A_{v,\widetilde{i}})$ and hence the commutative relation $A_{v,\widehat{i}}^{m(v)}-A_{v,\widetilde{i}}^{m(v)}\in I$ also belongs to $I_{N(A_{v,\widehat{i}})}$. Therefore, $(A_{\mathcal{G}})_{N(A_{v,\widehat{i}})}$ is not monomial. Thus, $A_{\mathcal{G}}$ is not locally monomial.

For the converse suppose that $\mathcal{G}$ has no loops and let $N\in \mathcal{D}_{Q_{\mathcal{G}},I_{\mathcal{G}}}$. We will prove that $A_N$ is monomial. By Theorem \ref{thm:Bralg}, there is $v\in \mathcal{G}$ such that $\omega(N)=A_{v,i}$ for some edge $i\in E$ incident to $v$. We will see that there is no any relation of type I in $I_N$. If there is such relation, namely, $A_{v,i}^{(m(v))}-A_{w,i}^{(m(w))}$ for some $w\in V$, then $w\neq v$ because $i$ is not a loop, and hence, $A_{v,i}$ and $A_{w,i}$ are disjoint. Now, suppose that $A_{v,i}=\alpha_0,\ldots,\alpha_{val(v)-1}$ and $A_{w,i}=\beta_0,\ldots,\beta_{val(w)-1}$, where $\alpha_0,\ldots,\alpha_{val(v)-1},\beta_0,\ldots,\beta_{val(w)-1}$ are arrows in $Q_1$. Since $\alpha_{val(v)-1}\beta_0\in I$ and $\beta_{val(w)-1}\alpha_0\in I$, there is no any edge connecting $A_{v,i}$ with $A_{w,i}$. Therefore, $N(A_{v,i})\neq N(A_{w,i})$, which implies that $A_{v,i}^{(m(v))}-A_{w,i}^{(m(w))}\notin I_N$. In consequence, $I_N$ is generated by all the paths of the form $A_{v,i}^{(m(v))}A_{v,i}^0$ and $\alpha\beta$, where $i\in E$ is an edge incident to $v$ and $\alpha,\beta\in (Q_N)_1$ are arrows such that $\alpha\beta$ is not a subpath of any special $v$-cycle. Thus, $A_N$ is monomial.

\end{proof}

Despite the previous result, in the following theorem we classify all the UMP Brauer graph algebras (whether or not they are locally monomial).

\begin{theorem}\label{ubg}
Let $\mathcal{G}$ be a Brauer graph and $A_{\mathcal{G}}=\Bbbk Q_{\mathcal{G}}/I_{\mathcal{G}}$ its corresponding Brauer graph algebra. Then, the following statements are equivalent.
\begin{enumerate}
\item $A_{\mathcal{G}}$ is a UMP algebra.

\item The Brauer graph $\mathcal{G}$ has one of the forms

\vspace{3mm}

\begin{figure}[!htb]
   \begin{minipage}{0.48\textwidth}
\centering
 $\,\xymatrix{
 \bullet \ar@{-}[r] & \bullet
 }$
 \caption{}
   \end{minipage}\hfill
   \begin{minipage}{0.48\textwidth}
 \centering
 \begin{tikzpicture}
    \node (bu) {$\bullet$} ;
    \path  (bu)   edge[my loop] node[above]  {} (bu);
\end{tikzpicture}
\end{minipage}
\end{figure}

\item The quiver $Q_{\mathcal{G}}$ and the ideal $I_{\mathcal{G}}$ are given by one of the following four cases
\begin{enumerate}
    \item $Q_0=\{0\}$ and $Q_1=\emptyset$. In this case, $A_{\mathcal{G}}\cong \Bbbk$.
    \item \ \ $Q_{\mathcal{G}}: \xymatrix{0 \ar@(ur,ul)[]_{\alpha} &  \mbox{ and } \ \ I_{\mathcal{G}}=\langle \alpha^{m+1}\rangle}$, with $m\in \mathbb{Z}^+$.

In this case, $A_{\mathcal{G}}\cong \Bbbk[ \,x] \,/\langle x^{m+1} \rangle$.
    \item \ \ $Q_{\mathcal{G}}: \xymatrix{0 \ar@(ru,lu)[]_{\alpha}\ar@(ld,rd)[]_{\beta} &  \mbox{ and } \ \ I_{\mathcal{G}}=\langle \alpha^m-\beta^n, \alpha\beta, \beta\alpha \rangle}$, where $m,n\in \mathbb{Z}_{\geq 2}$.

In this case, $A_{\mathcal{G}}\cong \Bbbk\langle\,x,y\rangle \,/\langle xy,yx,x^m-y^n\rangle$.
    \item\ \ $Q_{\mathcal{G}}: \xymatrix{0 \ar@(ru,lu)[]_{\alpha}\ar@(ld,rd)[]_{\beta} &  \mbox{ and } \ \ I_{\mathcal{G}}=\langle (\alpha\beta)^m-(\beta\alpha)^m, \alpha^2, \beta^2 \rangle}$, where $m\in \mathbb{Z}^+$.

In this case, $A_{\mathcal{G}}\cong \Bbbk\langle\,x,y\rangle \,/\langle x^2,y^2,(xy)^m-(yx)^m\rangle$.
    
\end{enumerate} 
\end{enumerate}
\end{theorem}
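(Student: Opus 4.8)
The plan is to prove $(2)\Leftrightarrow(3)$ by unwinding the Brauer-graph construction, then to obtain $(3)\Rightarrow(1)$ by checking the four algebras directly, and to treat $(1)\Rightarrow(2)$ as the substantive implication. Throughout I rely on Corollary~\ref{cor:local} to reduce the UMP question to the components $A_N$, and on Theorem~\ref{thm:Bralg} to identify each component $N$ with a non-truncated vertex $v$ so that $\omega(N)=A_{v,\widehat i}$ is a special $v$-cycle of length $val(v)$.

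For $(2)\Leftrightarrow(3)$ I would read off $Q_{\mathcal{G}}$ and $I_{\mathcal{G}}$ for the two admissible shapes. If $\mathcal{G}$ is the single edge $\bullet-\bullet$ with edge $i$ and endpoints $u,v$ of valency one, the three multiplicity patterns give exactly cases (a), (b), (c): both vertices truncated yields the empty quiver and $A_{\mathcal{G}}\cong\Bbbk$; one non-truncated vertex yields a single loop $\alpha$ at $i$ with the Type~II relation giving $I_{\mathcal{G}}=\langle\alpha^{m+1}\rangle$; both non-truncated yields two loops $\alpha,\beta$ at $i$ with the Type~I relation $\alpha^m-\beta^n$ and the Type~III relations $\alpha\beta,\beta\alpha$. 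If $\mathcal{G}$ is the single loop then $val(v)=2$, the special cycles are $\alpha\beta$ and $\beta\alpha$, and the Type~I, II, III relations give case (d) once one checks that the Type~II relations $(\alpha\beta)^m\alpha$ and $(\beta\alpha)^m\beta$ are consequences of $\alpha^2,\beta^2$ and $(\alpha\beta)^m-(\beta\alpha)^m$. Reading the same computation backwards matches each of (a)--(d) to one of the two shapes.

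The heart of the argument is a criterion for a single component $A_N$ to be UMP. By the analysis behind Theorems~\ref{thm:Bralg} and \ref{thm:trunc}, every maximal path of $A_N$ is represented by a power $A_{v,\widehat i}^{(m(v))}$ over the half-edges $\widehat i$ incident to $v$; these representatives are cyclic rotations of one another, so when $val(v)\geq 2$ each of them contains every arrow of the cycle. Two representatives can coincide in $A_N$ only through a Type~I relation, and within a fixed component the only Type~I relations in $I_N$ are those coming from a loop at $v$ (a relation $A_{v,\widehat i}^{(m(v))}-A_{w,\widetilde i}^{(m(w))}$ with $w\neq v$ involves arrows of two different components and hence does not lie in $I_N=I\cap\Bbbk Q_N$). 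Therefore the number of distinct maximal paths of $A_N$ equals $val(v)$ minus the number of loops at $v$, and since all of them share arrows as soon as $val(v)\geq 2$, the component $A_N$ is UMP if and only if this number equals $1$, that is, $val(v)=1$ or $v$ carries a single loop and no other edge. For loop-free $\mathcal{G}$ the component $A_N$ is monomial by Lemma~\ref{lem:BGALM} and this criterion can alternatively be extracted from Theorems~\ref{thm:main} and \ref{thm:umpchar1}; the loop case, where $A_N$ is not monomial, requires the direct count just described.

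With this criterion, $(3)\Rightarrow(1)$ is immediate: (a) has no arrows, in (b) the unique vertex has valency one, in (c) the two components are the single loops $\alpha$ and $\beta$ (the relation $\alpha^m-\beta^n$ joins distinct components and is irrelevant to each $A_N$), and in (d) the identification $(\alpha\beta)^m=(\beta\alpha)^m$ collapses the two rotations into one maximal path. For $(1)\Rightarrow(2)$, UMP forces every non-truncated vertex of $\mathcal{G}$ to satisfy $val(v)-(\#\text{loops at }v)=1$; since any loop makes its vertex non-truncated of valency at least two, connectedness of $\mathcal{G}$ then leaves only two possibilities, namely a single loop vertex (forcing $\mathcal{G}$ to be the single loop) or a loop-free graph in which every vertex has valency one (forcing $\mathcal{G}$ to be the single edge). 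I expect the main obstacle to be precisely the component criterion in the presence of loops: one must check with care that Type~I relations are confined to loop-components and that the surviving rotations stay genuinely distinct modulo $I_N$, so that the UMP property is neither spuriously created nor destroyed by the non-monomial relations.
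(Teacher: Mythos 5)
Your proposal is correct and follows essentially the same route as the paper: both reduce to components via Corollary~\ref{cor:local} and Theorem~\ref{thm:Bralg}, identify the maximal paths of a component with the rotations $A_{v,\widehat{i}}^{(m(v))}$ of the special $v$-cycle, and use that Type~I relations (hence loops) are the only possible source of identifications among them, with $(2)\Leftrightarrow(3)$ done by direct unwinding and case $(d)$ by direct computation. The only cosmetic difference is that you package $(1)\Rightarrow(2)$ as the count $val(v)-\#\{\text{loops at }v\}=1$ per non-truncated vertex, whereas the paper simply exhibits two distinct non-disjoint maximal paths at any vertex incident to two different edges and derives the contradiction from the Type~I relations.
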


\begin{proof} First, assume that $A_{\mathcal{G}}$ is a UMP algebra. Suppose that $v$ is a vertex of $\mathcal{G}$ such that there are two dif\mbox ferent edges $i$ and $j$ incident to $v$, where $j$ is a successor of $i$ in the cyclic order associated to $v$. Hence, there exist half-edges $\widehat{i}$ and $\widehat{j}$ which induce an arrow $\alpha$ from $i$ to $j$ in $Q_1$. In this case, the maximal paths $A_{v,\hat{i}}^{(m(v))}+I$ and $A_{v,\hat{j}}^{(m(v))}+I$ are not disjoint due to the fact that $(A_{v,\hat{i}}^{(m(v))})^0=\alpha=(A_{v,\hat{j}}^{(m(v))})^l$, where $l=l_{A_{v,\hat{j}}^{(m(v))}}$. Since $A_{\mathcal{G}}$ is a UMP algebra, it follows that $A_{v,\hat{i}}^{(m(v))}+I=A_{v,\hat{j}}^{(m(v))}+I$. Nevertheless, by def\mbox inition of the relations of type I, it only occurs when $i=j$, which is a contradiction. Thus, for any vertex $v\in V$ there is only one edge incident to $v$ and hence $\mathcal{G}$ is one of the Brauer graphs in $(2)$.

Conversely, if $\mathcal{G}$ is one of the Brauer graphs in $(2)$, then a direct calculation shows that $A_{\mathcal{G}}$ is one of the bound quiver algebras given in $(3)$. Finally, if $A_{\mathcal{G}}$ is one of the algebras given in $(3)$, then, by Lemma \ref{lem:BGALM} and Theorem \ref{thm:main}, the algebra $A_{\mathcal{G}}$ is a UMP algebra in cases $(a)$, $(b)$ and $(c)$. For the f\mbox inal case $(d)$, a straightforward computation allows us to conclude that $A_{\mathcal{G}}$ is also a UMP algebra, which completes the proof.
\end{proof}

\begin{remark}
\begin{enumerate}
\item An interesting subclass of the Brauer graph algebras are the so-called Brauer tree algebras. They are Brauer graph algebras $A_{\mathcal{G}}$ given by a tree $\mathcal{G}$, that is, a graph without cycles, and there is at most one vertex in $V$ of multiplicity greater than $1$. It is known that Brauer tree algebras are the class of Brauer graph algebras of f\mbox inite-representation type \cite[Corollary 2.9]{S} and they appear as blocks of group algebras over f\mbox inite groups \cite[Theorem 4.12]{SY}. Theorem \ref{ubg} gives a characterization of Brauer tree algebras that are also UMP algebras, and they are the family of algebras in items $(3)(a)$ and $(3)(b)$.
\item Our classif\mbox ication for Brauer tree UMP algebras, coincides with the family of (symmetric monomial UMP) algebras appearing in the classif\mbox ication in \cite[Lemma 3.1 $(i)$]{Erd} for symmetric indecomposable of f\mbox inite-representation type algebras such that all indecomposable non--projective modules have $\Omega-$period two.
\end{enumerate}
\end{remark}

\subsection{Examples and discussion}\label{sse}
In this subsection, we use illustrative examples to analyze and challenge the conditions for formulating some of our main results and constructing tools. It is also worth mentioning that in \cite{CFR2}, the authors analyze another set of properties and consequences of this new theory and its tools, with interesting implications for the study of representation theory of monomial special multiserial algebras and some of their homological properties.

\begin{exam}\label{enonsmu} In this example, we show the importance of the property of $A$ of being special multiserial to apply Theorem \ref{thm:main}. Consider the following quiver.
$$Q: \xymatrix{\cdot\ar@<-0.5ex>[rr]_c & &\cdot\ar@<-0.5ex>[ll]_b\ar[rr]^d\ar@(ur,ul)[]_{a} & &\cdot}$$
and take $I=\langle a^3,bcb,ab,ca,cd\rangle$. Then, we have that $\omega_a=a$, $\omega_b=\omega_c=bc$, and $\omega_d=d$, and we obtain the following ramif\mbox ications graph.
$$G_{Q,I}: \ \xymatrix{\cdot_{\omega_a}\ar[rr] &  & \cdot_{\omega_d}\\ & \cdot_{\omega_b} & }$$
Hence, $\mathcal{D}_{Q,I}=\{L,N\}$ where $\xymatrix{L:\ \cdot_{\omega_a}\ar[rr] &  & \cdot_{\omega_d}}$ and $\xymatrix{N:\ \cdot_{\omega_b}}$. Therefore,
$$Q_L: \xymatrix{\cdot \ar[rr]^d\ar@(ur,ul)[]_{a} & & \cdot \ , & I_L=\langle a^3\rangle, & \mathcal{M}_L=\{a^2d\}},$$
$$Q_N: \xymatrix{\cdot\ar@<-0.5ex>[rr]_c & &\cdot\ar@<-0.5ex>[ll]_b\ , & I_N=\langle bcb\rangle, &\mathcal{M}_N=\{cbc\}}$$
Note that $A:=\Bbbk Q/I$ is locally monomial, but not special multiserial because $a^2$ and $ad$ are not in $I$. However, the weakly connected components of $G_{Q,I}$ are as in the f\mbox irst case of the form of the components of the ramif\mbox ications graphs of special multiserial algebras (see Lemma \ref{lem:compchar}). Then, we can def\mbox ine $\omega(N)$, $\mathcal{T}_N$, $\Omega_N$, $S_N$, and the $\omega$-relations in the same way as for special multiserial algebras. We have that $\omega(L)=\omega_a\omega_d=ad$ and $\omega(N)=\omega_b=bc$. Note that $\Omega_L=\emptyset$, $\mathcal{S}_L=\{a^3\}$, $\Omega_N=\emptyset$, and $\mathcal{S}_N=\{bcb\}$. Observe that for the $\omega$-relation $r=a^3$, the path $\omega(N(r))=ad$ is not cyclic, but $A$ is UMP. This implies that Theorem \ref{thm:main} can not be applied for locally monomial algebras in general.
\end{exam}

\begin{exam}\textbf{(The UMP property is not a Morita invariant)}
We provide an example which shows that the UMP property is not a Morita invariant. Indeed, consider the following quiver (\cite[Ex. 5.3, p.150]{Sch})
$$
\xymatrix{                                  
             & & 2 \ar[dr]^\beta &  \\
Q: & 1 \ar[ur]^\alpha \ar[rr]^\gamma &           & 4 \ar[dl]^\delta\\
       & & 3 \ar[ul]^\epsilon &}
$$
and def\mbox ine the two ideals $I_1=\langle\gamma\delta,\delta\epsilon\rangle$ and $I_2=\langle\gamma\delta-\alpha\beta\delta, \delta\epsilon\rangle$. Let $A_1=\Bbbk Q/I_1$ and $A_2=\Bbbk Q/I_2$ be the respective bound quiver algebras. It is easy to check that $A_1\cong A_2$ through the isomorphism given by $\alpha\mapsto\alpha$, $\beta\mapsto\beta$, $\gamma\mapsto\gamma-\alpha\beta$, $\delta\mapsto\delta$ and $\epsilon\mapsto\epsilon$. Nevertheless, $A_1$ is not a UMP algebra contrary to the case of $A_2$, which is a UMP algebra. In fact, the set of maximal paths are, respectively, $\mathcal{M}_1=\{\epsilon\gamma, \epsilon\alpha\beta\delta\}$ and $\mathcal{M}_2=\{\epsilon\alpha\beta\delta\}$, where in the former set the maximal paths coincides in the arrow $\epsilon$.

In this case, we have that $\omega_\alpha=\alpha\beta$, $\omega_\gamma=\gamma$ and $\omega_\delta=\delta\epsilon$. Likewise, the ramif\mbox ications graph $G_{Q,I_i}$, $i=1,2$, are def\mbox ined by
\begin{figure}[!htb]
   \begin{minipage}{0.46\textwidth}
\centering
$G_{Q,I_1}: \,\xymatrix{                               
              & \omega_{\delta} \ar@/^/[dl]\ar[dr] &  \\
\omega_{\alpha}\ar@/^/[ur]&           &\omega_{\gamma}}$
   \end{minipage}\hfill
   \begin{minipage}{0.52\textwidth}
 \centering
$G_{Q,I_2}:\,\xymatrix{                               
              &\omega_{\delta} \ar@/^/[dl]\ar@/^/[dr] &  \\
\omega_{\alpha} \ar@/^/[ur]&           &\omega_{\gamma} \ar@/^/[ul]}$
\end{minipage}
\end{figure}\\

In particular, the graphs $G_{Q,I_1}$ and $G_{Q,I_2}$ are not isomorphic.

\end{exam}

\section*{Acknowledgments}
The authors are especially grateful to professor Jos\'e A. V\'elez-Marulanda (VSU, GA, USA) for the very useful comments, suggestions and several enriching discussions about this work.

\section*{Declarations}

\subsection*{Funding}
The f\mbox irst and fourth authors were partially supported by CODI (Universidad de Antioquia, UdeA) by project numbers 2020-33305 and 2023-62291, respectively.

\subsection*{Ethical Approval}
Not applicable.

\subsection*{Availability of data and materials}
Not applicable.

\end{document}